\theoremstyle{plain}
\newtheorem{theorem}{Theorem}
\newtheorem{lemma}{Lemma}
\newtheorem{corollary}{Corollary}
\newtheorem{proposition}{Proposition}
\theoremstyle{definition}
\newtheorem{definition}{Definition}
\newtheorem{example}{Example}
\theoremstyle{remark}
\newtheorem{remark}{Remark}
\DeclareMathOperator{\cl}{cl}
\DeclareMathOperator{\co}{co}
\DeclareMathOperator{\cone}{cone}
\DeclareMathOperator{\interior}{int}
\DeclareMathOperator{\linhull}{span}
\DeclareMathOperator{\dist}{dist}
\author{M.V. Dolgopolik\footnote{Institute for Problems in Mechanical Engineering of the Russian Academy of Sciences,
Saint Petersburg, Russia}}
\title{Nonlocal error bounds for piecewise affine functions}
\begin{document}

\maketitle

\begin{abstract}
The paper is devoted to a detailed analysis of nonlocal error bounds for nonconvex piecewise affine functions.
We both improve some existing results on error bounds for such functions and present completely new necessary and/or
sufficient conditions for a piecewise affine function to have an error bound on various types of bounded and unbounded
sets. In particular, we show that any piecewise affine function has an error bound on an arbitrary bounded set and
provide several types of easily verifiable sufficient conditions for such functions to have an error bound on unbounded
sets. We also present general necessary and sufficient conditions for a piecewise affine function to have an error bound
on a finite union of polyhedral sets (in particular, to have a global error bound), whose derivation reveals a structure
of sublevel sets and recession functions of piecewise affine functions.
\end{abstract}

\section{Introduction}

Piecewise affine functions have been an object of active research for many years. General topological and
order-theoretic properties of the set of piecewise affine (and locally piecewise affine) functions were studied in
\cite{AliprantisHarrisTourky,AddebTroitsky}. Various representations of piecewise affine functions, such as max-min,
min-max, and DC (Difference-of-Convex functions) representations, were studied in
\cite{GorokhovikZorko,Gorokhovik,Ovchinnikov}, while algorithms for constructing such representations were developed in
\cite{KripfganzSchulze,SchluterDarup,Angelov,Dolgopolik_OMS}. The surjectivity property for piecewise affine maps was
analysed in \cite{RheinboldtVandergraft,Radons}, while the bijectivity of such functions was studied in
\cite{KuhnLowen}. 

Error bound property is an important concept in variational analysis having multiple applications
\cite{Pang,FabianHenrion,Kruger,BolteNguyen,CuongKruger,Aze}. Local error bounds have attracted more attention of
researcher than global ones, since theorems on nonlocal/global error bound property for general nonlinear mappings
often involve conditions \cite{Aze,CuongKruger} that are very hard to verify in particular cases. Nonetheless, in some
specific cases (such as convex \cite{LiSinger,WangPang,Deng}, piecewise convex \cite{Li}, DC \cite{LeThiDing}, and
polynomial \cite{Vui} cases) one can use a structure of the problem to obtain simple conditions ensuring the
nonlocal/global error bound property. In the piecewise affine case, such conditions can be expressed in terms of the
so-called recession function of a piecewise affine function \cite{Gowda1996}. However, to the best of the author's
knowledge, nonlocal error bounds (that is, error bounds on various types of bounded and unbounded sets) for piecewise
affine functions, as well as conditions for such functions to have the global error bound property that do not involve
the recession function, have not been properly studied before.

The main goal of this paper is to present a detailed analysis of \textit{nonlocal} error bounds for \textit{nonconvex}
piecewise affine functions on various types of sets. We aim at improving some existing results on this topic, as well as
obtaining new necessary and/or sufficient conditions for a piecewise affine function to have a nonlocal error bound. To
this end, we heavily utilise Gorokhovik-Zorko's representation theorem for piecewise affine functions
\cite{GorokhovikZorko,Gorokhovik} that allows one to better understand a structure of sublevel sets of such functions
and conditions ensuring that they have nonlocal error bounds.

We prove that a piecewise affine function always has an error bound on an arbitrary bounded set and provide necessary
and/or sufficient conditions for such function to have an error bound on various types of unbounded sets. In the
unbounded case, we obtain simple and easily verifiable sufficient conditions for a piecewise affine function to have an
error bound, as well as more theoretical necessary and sufficient conditions that might be less appealing for
applications, but nonetheless reveal deep interrelations between some properties of piecewise affine functions, their
sublevel sets, recession functions, and ``flat parts'' (that is, polyhedral sets on which a piecewise affine function is
constant). We also apply all these results to an analysis of error bounds for systems of piecewise affine equality and
inequality constraints.

It should be mentioned that apart from many completely new results, we present improved versions of Robinson's
\cite{Robinson} and Gowda's \cite{Gowda1996} results on error bounds for piecewise affine functions. Robinson
\cite{Robinson} showed that for a piecewise affine function $F \colon \mathbb{R}^d \to \mathbb{R}^m$ 
\textit{there exists} $\rho > 0$ such that the function $\| F(\cdot) \|$ has an error bound on the set 
$V(\rho) = \{ x \mid \| F(x) \| \le \rho \}$. We improve this result by showing how $\rho > 0$ from the Robinson's
theorem can be \textit{easily estimated} (see Theorem~\ref{thrm:Robinson}) and how the procedure for estimating $\rho$
can in some cases be used to verify that the function $\| F(\cdot) \|$ has a global error bound. 

In turn, Gowda in \cite{Gowda1996} presented necessary and sufficient conditions for a piecewise affine function to
have a global error bound in terms of the recession function. We extend Gowda's result to the case of error bounds on a
finite union of polyhedral sets and, moreover, obtain new necessary and sufficient conditions for a piecewise affine
function to have an error bound on a finite union of polyhedral sets (in particular, a global error bound) that are not
based on the use of the recession function (Lemma~\ref{lem:ErrorBound_StratifiedReccCones}).

The paper is organised as follows. Some auxiliary definitions and results from convex analysis and related fields that
are used throughout the article are collected in Section~\ref{sect:Preliminaries}, while some auxiliary properties of
piecewise affine functions are studied in Section~\ref{sect:PiecewiseAffine}. Section~\ref{sect:ErrorBoundsPAFunc} is
devoted to error bounds for real-valued nonconvex piecewise affine functions. Subsection~\ref{subsect:EB_BoundedSets}
contains an improved version of the Robinson's result \cite{Robinson} and an analysis of error bounds for piecewise
affine functions on bounded sets. Several types of sufficient conditions for the existence of an error bound on
unbounded sets are given in Subsection~\ref{subsect:EB_UnboundedSets}, while general necessary and sufficient conditions
for a piecewise affine function to have an error bound on a finite union of polyhedral sets are studied in
Subsection~\ref{subsect:EB_GeneralCond}. Finally, in Section~\ref{sect:PAConstraints} the results on error bounds for
piecewise affine functions are applied to an analysis of error bounds for systems of piecewise affine equality and
inequality constraints.

\section{Preliminaries}
\label{sect:Preliminaries}

Let us recall some auxiliary definitions and results that will be used throughout the article. First, we present 
a particular version of the well-known Hoffman's theorem \cite{Hoffman} on the error bounds for
systems of linear inequalities. For any set $Q \subset \mathbb{R}^d$ and $x \in \mathbb{R}^d$ denote by
$\dist(x, Q) = \inf_{y \in Q} \| x - y \|$ the distance between $x$ and $Q$, where $\| \cdot \|$ is a norm on
$\mathbb{R}^d$.

\begin{theorem}[Hoffman]
Let $A x \le b$ with $A \in \mathbb{R}^{m \times d}$ and $b = (b_1, \ldots, b_m) \in \mathbb{R}^m$ be a consistent
system of linear inequalities. Then there exists $\tau > 0$ such that
\[
  \tau \dist(x, \Omega) \le \max_{i \in \{ 1, \ldots, m \}} \max\{ 0, \langle A_i, x \rangle - b_i \},
\]
where $\Omega = \{ x \in \mathbb{R}^d \mid A x \le b \}$, $A_i$ are the rows of the matrix $A$, and 
$\langle \cdot, \cdot \rangle$ is the inner product in $\mathbb{R}^d$.
\end{theorem}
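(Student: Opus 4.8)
The plan is to establish the inequality first for the Euclidean norm $\|\cdot\|_2$ and then recover the general case from the equivalence of norms on $\mathbb{R}^d$: if the estimate holds for $\|\cdot\|_2$ with constant $\tau_2$ and $\|x\| \le c \|x\|_2$ for all $x$, then it holds for $\|\cdot\|$ with constant $\tau_2/c$, because $\dist(x, \Omega) \le c\,\dist_2(x, \Omega)$ while the right-hand side does not depend on the norm at all. So from now on I fix the Euclidean norm and write $r_i(x) = \max\{0, \langle A_i, x \rangle - b_i\}$ and $r(x) = (r_1(x), \dots, r_m(x))$; the claim reduces to finding $\tau > 0$ with $\dist_2(x, \Omega) \le \tau^{-1} \|r(x)\|_\infty$ for every $x$.

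The case $x \in \Omega$ is trivial, so fix $x \notin \Omega$ and let $y$ be the (unique) Euclidean projection of $x$ onto the closed convex set $\Omega$ (nonempty by consistency of the system). I would invoke two standard facts about polyhedra: (i) $y = P_\Omega(x)$ if and only if $x - y$ lies in the normal cone $N_\Omega(y)$; and (ii) for $\Omega = \{z : Az \le b\}$ one has $N_\Omega(y) = \cone\{A_i : i \in J(y)\}$, where $J(y) = \{i : \langle A_i, y\rangle = b_i\}$ is the active index set at $y$ (the nontrivial inclusion being a direct consequence of Farkas' lemma). Hence $x - y = \sum_{i \in J(y)} \lambda_i A_i$ with all $\lambda_i \ge 0$, and by Carathéodory's theorem for cones I may discard indices and assume that the rows $\{A_i : i \in J\}$ indexed by the set $J$ of those $i$ with $\lambda_i > 0$ are linearly independent.

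Now I would estimate directly. Since every $i \in J$ is active at $y$, i.e. $\langle A_i, y\rangle = b_i$, and since $\langle A_i, x\rangle - b_i \le r_i(x) \le \|r(x)\|_\infty$ with $\lambda_i \ge 0$, we get
\[
  \|x - y\|_2^2 = \Bigl\langle x - y,\ \sum_{i \in J} \lambda_i A_i \Bigr\rangle = \sum_{i \in J} \lambda_i \bigl( \langle A_i, x\rangle - b_i \bigr) \le \|r(x)\|_\infty \sum_{i \in J} \lambda_i .
\]
It remains to bound $\sum_{i \in J}\lambda_i = \|\lambda\|_1$ by a constant multiple of $\|x - y\|_2$. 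Writing $A_J$ for the submatrix of $A$ with rows indexed by $J$, linear independence makes $A_J A_J^{\top}$ invertible, so $\lambda = (A_J A_J^{\top})^{-1} A_J (x - y)$ and $\|\lambda\|_1 \le M_J \|x - y\|_2$, where $M_J$ is the operator norm of $(A_J A_J^{\top})^{-1} A_J$ viewed as a map from $(\mathbb{R}^d, \|\cdot\|_2)$ to $(\mathbb{R}^{|J|}, \|\cdot\|_1)$. Crucially, $J$ ranges over the \emph{finite} family of index sets of linearly independent rows of $A$, so $M := \max_J M_J$ is a finite constant depending on $A$ only. Combining the two displays gives $\|x - y\|_2^2 \le M \|r(x)\|_\infty \|x - y\|_2$, whence $\dist_2(x, \Omega) = \|x - y\|_2 \le M \|r(x)\|_\infty$, and $\tau = 1/M$ works; this also yields an explicit bound on $\tau^{-1}$ in terms of the rows of $A$.

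The only real obstacle is fact (ii), the description of the normal cone of a polyhedron as the cone generated by the active constraint normals, which rests on Farkas' lemma; the rest is bookkeeping together with the compactness-free observation that the relevant index sets $J$ are finite in number. An alternative is a contradiction argument — take $x_k \notin \Omega$ with $\dist_2(x_k, \Omega) \ge k \|r(x_k)\|_\infty$, project, pass to a subsequence on which the active set $J(y_k)$ is constant, and rescale to contradict the non-degeneracy of the corresponding subsystem — but the constructive argument above is preferable because it is quantitative.
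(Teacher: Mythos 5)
Your argument is correct. Note, however, that the paper does not prove this statement at all: Hoffman's theorem appears in Section~\ref{sect:Preliminaries} purely as a cited preliminary (attributed to \cite{Hoffman}), so there is no in-paper proof to compare against; what you have supplied is a self-contained proof of the black-box result the paper builds on. Your route is the classical projection argument: reduce to the Euclidean norm (legitimate, since the right-hand side is norm-independent and distances change by at most a constant factor), project $x$ onto $\Omega$, identify $x - P_\Omega(x)$ with an element of the normal cone $\cone\{A_i : i \in J(y)\}$ via Farkas' lemma, prune to a linearly independent active subset by conic Carath\'eodory, and then bound $\|\lambda\|_1$ by $\|x-y\|_2$ uniformly over the finitely many index sets $J$ with linearly independent rows, using $\lambda = (A_J A_J^{\top})^{-1} A_J (x-y)$. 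Each step is sound: in the key estimate you only need $\langle A_i, x\rangle - b_i \le \|r(x)\|_\infty$ together with $\lambda_i \ge 0$, so no sign condition on the residuals at the pruned indices is required, and the uniformity of the constant $M$ follows from the finiteness of the family of admissible $J$. The one genuinely nontrivial ingredient, the description of the normal cone of a polyhedron by the active constraint normals, you correctly isolate and attribute to Farkas' lemma. A side benefit of your proof over a mere citation is that it yields an explicit, computable bound $\tau^{-1} \le \max_J \|(A_J A_J^{\top})^{-1} A_J\|_{2 \to 1}$ depending only on the rows of $A$, which is in the same quantitative spirit as the paper's stated goal of making Robinson-type constants estimable.
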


Let $Q \subseteq \mathbb{R}^d$ be a nonempty set. \textit{The recession cone} $0^+ Q$ of the set $Q$
consists of all those vectors $z \in \mathbb{R}^d$ for which one can find $x \in Q$ such that $x + \lambda z \in Q$ for
all $\lambda \ge 0$. The set $Q$ is called a \textit{polytope}, if it is the convex hull of a finite number of points.
The set $Q$ is called \textit{polyhedral}, if it is the intersection of a finite number of closed half-spaces. 
The Motzkin theorem (see \cite[Thms.~19.1 and 19.5]{Rockafellar}) provides a useful representation of polyhedral sets.

\begin{theorem}[Motzkin]
A set $Q \subseteq \mathbb{R}^d$ is polyhedral if and only if there exists a polytope $P \subset \mathbb{R}^d$ and a
polyhedral cone $K \subset \mathbb{R}^d$ such that $Q = P + K$. Moreover, such cone $K$ is uniquely defined and equal to
the recession cone $0^+ Q$.
\end{theorem}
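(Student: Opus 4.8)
The plan is to reduce the statement to the Minkowski--Weyl theorem for cones and then return to the affine case by homogenisation. Call a convex cone in $\mathbb{R}^d$ \emph{finitely generated} if it has the form $\cone\{a_1,\dots,a_r\}$ for finitely many vectors; the crucial auxiliary fact is that a convex cone is finitely generated if and only if it is polyhedral. I would establish this in two steps. First, Fourier--Motzkin elimination shows that the image of a polyhedral set under a linear map (equivalently, the projection of a polyhedral set onto a coordinate subspace) is again polyhedral; applying this to $\{(x,\mu)\in\mathbb{R}^{d+r}\mid x=\sum_{j=1}^{r}\mu_j a_j,\ \mu_j\ge 0\}$ and projecting out the $\mu$ variables shows that every finitely generated cone is polyhedral, hence closed. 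Second, for a polyhedral cone $D=\{z\mid\langle c_i,z\rangle\le 0,\ i=1,\dots,s\}$ one checks by a direct computation that the polar cone $D^{\circ}$ coincides with $\cone\{c_1,\dots,c_s\}$ — the inclusion ``$\supseteq$'' is immediate, and ``$\subseteq$'' follows from the closedness of finitely generated cones together with the separation theorem — so $D^{\circ}$ is finitely generated, hence polyhedral by the first step; writing $D^{\circ}$ back in inequality form and invoking the bipolar theorem $D=D^{\circ\circ}$ then exhibits $D$ itself as a finitely generated cone. This yields the equivalence in both directions.

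Granting the cone case, I would prove the ``only if'' part by homogenisation. Given a nonempty polyhedral set $Q=\{x\mid\langle c_i,x\rangle\le\beta_i,\ i=1,\dots,s\}$, the set $\widehat{Q}=\{(x,t)\in\mathbb{R}^{d+1}\mid\langle c_i,x\rangle\le\beta_i t\ (i=1,\dots,s),\ t\ge 0\}$ is a polyhedral cone, hence finitely generated; splitting its generators according to whether their last coordinate is positive (rescaled to $1$, giving vectors $(b_k,1)$) or zero (giving vectors $(a_j,0)$) and intersecting $\widehat{Q}$ with the hyperplane $\{t=1\}$ yields $Q=\co\{b_1,\dots,b_p\}+\cone\{a_1,\dots,a_q\}$. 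Setting $P=\co\{b_1,\dots,b_p\}$, a polytope, and $K=\cone\{a_1,\dots,a_q\}$, which is polyhedral by the cone case, gives the decomposition. For the ``if'' part, if $Q=P+K$ with $P=\co\{b_k\}$ and $K$ polyhedral, then $K$ is finitely generated by the cone case, so $Q$ is the image of the polyhedral set $\{(\lambda,\mu)\ge 0\mid\sum_k\lambda_k=1\}$ under a linear map and is therefore polyhedral.

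Finally, I would identify $K$ with $0^{+}Q$, which also gives uniqueness. The inclusion $K\subseteq 0^{+}Q$ is clear: if $z\in K$ and $x=p+k\in P+K$, then $x+\lambda z=p+(k+\lambda z)\in P+K$ for every $\lambda\ge 0$ since $K$ is a cone. Conversely, let $z\in 0^{+}Q$ and, using the definition of the recession cone, fix $x_0\in Q$ with $x_0+\lambda z\in Q$ for all $\lambda\ge 0$; writing $x_0+nz=p_n+k_n$ with $p_n\in P$ and $k_n\in K$, we get $k_n/n=z+(x_0-p_n)/n\to z$ as $n\to\infty$, because $P$ is bounded. Since $k_n/n\in K$ and $K$ is closed (being polyhedral), $z\in K$. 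Hence $0^{+}Q=K$, so $K$ is uniquely determined by $Q$.

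The main obstacle is the cone case, i.e.\ the Minkowski--Weyl equivalence and, hidden inside it, the closedness of finitely generated cones; once that is in hand, the homogenisation step and the recession-cone computation are routine. A self-contained alternative that bypasses polarity is to run Fourier--Motzkin elimination throughout: eliminating variables from the homogenised inequality and generator descriptions simultaneously proves that projections of polyhedra are polyhedra and produces the decomposition $Q=P+K$ directly.
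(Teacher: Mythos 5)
Your proof is correct. Note, though, that the paper does not prove this statement at all: it is quoted as a classical preliminary with a citation to Rockafellar (Thms.~19.1 and 19.5), so there is no internal argument to compare against. Your route --- Fourier--Motzkin elimination to show finitely generated cones are polyhedral (hence closed), polarity plus the bipolar theorem for the converse cone statement, homogenisation to $\widehat{Q}\subset\mathbb{R}^{d+1}$ for the decomposition $Q=P+K$, and the limit argument $k_n/n\to z$ with closedness of $K$ for the identification $K=0^{+}Q$ --- is essentially the standard Minkowski--Weyl proof found in the cited source, and all the delicate points (closedness of finitely generated cones before invoking separation, the rescaling of generators with positive last coordinate, boundedness of $P$ in the recession-cone step) are handled in the right logical order. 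The only cosmetic caveat is that you tacitly assume $Q$ nonempty, which is harmless since the ``moreover'' clause about $0^{+}Q$ only makes sense in that case, and which matches how the paper uses the theorem.
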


Let us now recall the definition of piecewise affine function \cite{KripfganzSchulze,GorokhovikZorko,Gorokhovik}.

\begin{definition} \label{def:PiecewiseAffineFunction}
A finite family $\sigma = \{ Q_1, \ldots, Q_s \}$ of polyhedral subsets $Q_i$ of $\mathbb{R}^d$ is called 
\textit{a polyhedral partition} of $\mathbb{R}^d$, if 
\[
  \bigcup_{i = 1}^s Q_i = \mathbb{R}^d, \quad \interior Q_i \ne \emptyset, \quad
  \interior Q_i \cap \interior Q_j = \emptyset \quad
  \forall i, j \in \{ 1, \ldots, s \}, \: i \ne j,
\]
where $\interior Q$ is the topological interior of a set $Q \subset \mathbb{R}^d$. A function 
$F \colon \mathbb{R}^d \to \mathbb{R}^m$ is called \textit{piecewise affine}, if there exists a polyhedral partition
$\sigma = \{ Q_1, \ldots, Q_s \}$ of $\mathbb{R}^d$ and a collection of affine functions
$F_i \colon \mathbb{R}^d \to \mathbb{R}^m$, $F_i(x) = a_i + V_i x$ with $V_i \in \mathbb{R}^{m \times d}$ and 
$a_i \in \mathbb{R}^m$, such that $F(x) = F_i(x)$ for all $x \in Q_i$ and $i \in \{ 1, \ldots, s \}$.
\end{definition}

\begin{remark}
As was noted in \cite{Gorokhovik}, the assumption that the sets $Q_i$ from the polyhedral partition have nonempty
interiors is, in fact, redundant. It is sufficient to suppose that only the relative interiors of the sets $Q_i$, 
$i \in \{ 1, \ldots, s \}$, are pairwise disjoint.
\end{remark}

It is worth mentioning that the set of all piecewise affine functions from $\mathbb{R}^d$ to $\mathbb{R}^m$ is closed
under addition, multiplication by scalar, as well as coordinate-wise supremum and infimum of finite families of
functions. Furthermore, this set is the smallest vector lattice (with respect to pointwise operations) containing all
affine functions. Finally, the composition of piecewise affine functions is also a piecewise affine function
\cite{Gorokhovik}.

Apart from representations of piecewise affine functions in terms of polyhedral partitions, one often has to deal with
various analytical representations of such functions. As was proved in \cite{GorokhovikZorko,Gorokhovik}, among various
analytical representations of piecewise affine functions there always exist a natural DC (Difference-of-Convex
functions) decomposition of such functions and a min-max/max-min representation that are especially convenient for
theoretical analysis.

\begin{theorem}[Gorokhovik-Zorko] \label{thrm:DCRepresentation}
Let $F \colon \mathbb{R}^d \to \mathbb{R}^m$ be a given function. The following assertions are equivalent:
\begin{enumerate}
\item{$F$ is piecewise affine;}

\item{$F$ can be represented in the form 
\[
  F(x) = \sup_{i \in I} F_i(x) + \inf_{j \in J} G_j(x) \quad \forall x \in \mathbb{R}^d
\]
for some finite families of affine functions $F_i \colon \mathbb{R}^d \to \mathbb{R}^m$, 
$i \in I := \{ 1, \ldots, \ell \}$, and $G_j \colon \mathbb{R}^d \to \mathbb{R}^m$, $j \in J := \{ 1, \ldots, s \}$,
where the supremum and the infimum are taken with respect to the coordinate-wise partial order in $\mathbb{R}^m$;
}

\item{$F$ can be represented in the form
\[
  F(x) = \inf_{i \in I} \sup_{j \in J(i)} F_{ij}(x) \quad \forall x \in \mathbb{R}^d
\]
for some affine functions $F_{ij} \colon \mathbb{R}^d \to \mathbb{R}^m$, $i \in I := \{ 1, \ldots, \ell \}$, and
$j \in J(i) = \{ 1, \ldots, s(i) \}$;
}

\item{$F$ can be represented in the form
\[
  F(x) = \sup_{i \in I} \inf_{j \in J(i)} F_{ij}(x) \quad \forall x \in \mathbb{R}^d
\]
for some affine functions $F_{ij} \colon \mathbb{R}^d \to \mathbb{R}^m$, $i \in I := \{ 1, \ldots, \ell \}$, and
$j \in J(i) = \{ 1, \ldots, s(i) \}$. 
}
\end{enumerate}
\end{theorem}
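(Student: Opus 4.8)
The plan is to prove the cycle $(1)\Rightarrow(2)\Rightarrow(3)\Rightarrow(1)$ together with $(2)\Rightarrow(4)\Rightarrow(1)$, which yields all four equivalences. Three of these implications are immediate. First, $(2)\Rightarrow(1)$, $(3)\Rightarrow(1)$ and $(4)\Rightarrow(1)$ follow at once from the closure properties recalled above: affine functions are piecewise affine, and the class of piecewise affine functions is a vector lattice closed under finite sums, so each of the right-hand sides in $(2)$--$(4)$ defines a piecewise affine function. Second, the passage between $(2)$, $(3)$ and $(4)$ is pure algebra: for any finite families of affine functions $F_i$ ($i\in I$) and $G_j$ ($j\in J$) one has, pointwise and coordinate-wise,
\[
  \sup_{i\in I}F_i+\inf_{j\in J}G_j
  =\inf_{j\in J}\sup_{i\in I}(F_i+G_j)
  =\sup_{i\in I}\inf_{j\in J}(F_i+G_j),
\]
so $(2)$ implies both $(3)$ and $(4)$ with $F_{ij}:=F_i+G_j$. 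Everything thus reduces to $(1)\Rightarrow(2)$, and I would establish this in two steps: produce a DC decomposition $F=P-Q$ with $P,Q$ piecewise affine and convex in each component, and then rewrite $P$ and $Q$ as coordinate-wise suprema of finitely many affine maps.

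For the DC step, note first that a piecewise affine $F$ with partition $\{Q_1,\dots,Q_s\}$ and pieces $F_i$ is continuous: if $x\in Q_i\cap Q_j$ then $F_i(x)=F(x)=F_j(x)$, and a subsequence argument using that the $Q_i$ are finitely many and closed upgrades this to continuity on $\mathbb R^d$. Let $H_1=\{h_1=0\},\dots,H_N=\{h_N=0\}$, with $h_k$ scalar affine, be the finitely many hyperplanes spanned by the facets of the cells $Q_i$, and let $\mathcal C$ be the finite family of closures of the connected components of $\mathbb R^d\setminus\bigcup_kH_k$. Since the interiors of the $Q_i$ are pairwise disjoint and cover $\mathbb R^d\setminus\bigcup_kH_k$, a short connectedness argument shows that each chamber $c\in\mathcal C$ lies inside a single $Q_i$, so $F$ agrees on $c$ with one affine map $\widehat F_c$; and if chambers $c,c'$ share a facet, that facet spans a unique $H_k$, whence $\widehat F_c-\widehat F_{c'}$ is an affine map vanishing on $H_k$ and therefore equals $v_kh_k$ for a constant vector $v_k\in\mathbb R^m$. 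Now put
\[
  P:=F+\lambda\Bigl(\sum_{k=1}^N|h_k|\Bigr)u,\qquad
  Q:=\lambda\Bigl(\sum_{k=1}^N|h_k|\Bigr)u,\qquad u:=(1,\dots,1)^{\top}\in\mathbb R^m,
\]
and observe that it suffices to check convexity of each component of $P$ along line segments whose interior avoids the codimension-$2$ faces of the chambers, since such segments are dense and each of them has only transversal single-hyperplane breakpoints, and a continuous function convex along a dense family of segments is convex. Along such a segment with direction $d$, the slope jump of the $l$-th component of $P$ at a crossing of $H_k$ equals $(v_k)_l\langle\nabla h_k,d\rangle+2\lambda|\langle\nabla h_k,d\rangle|$, which is nonnegative as soon as $\lambda\ge\tfrac12\max_{k,l}|(v_k)_l|$. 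Hence for such $\lambda$ every component of $P$ is convex, and $F=P-Q$ is the required DC decomposition.

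For the second step, note that $P$ and $Q$ are affine on each chamber, say $P|_c=\widehat P_c$ and $Q|_c=\widehat Q_c$. A continuous convex piecewise affine scalar function equals the pointwise maximum of the affine functions it carries on the cells of any generating partition (by convexity it lies above each such affine minorant, and it coincides with one of them at every point); applying this componentwise to $P$ and to $Q$ gives $P=\sup_{c\in\mathcal C}\widehat P_c$ and $Q=\sup_{c\in\mathcal C}\widehat Q_c$ in the coordinate-wise sense. Therefore
\[
  F(x)=P(x)-Q(x)=\sup_{c\in\mathcal C}\widehat P_c(x)+\inf_{c\in\mathcal C}\bigl(-\widehat Q_c(x)\bigr)\qquad\forall x\in\mathbb R^d,
\]
which is exactly representation $(2)$ with $I=J=\mathcal C$, $F_c:=\widehat P_c$, $G_c:=-\widehat Q_c$; combined with the elementary implications of the first paragraph this closes the cycle. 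I expect the main obstacle to be the convexity verification in the DC step: one must set up the chamber decomposition correctly, identify the differences of the pieces of $F$ on adjacent chambers as multiples of the hyperplane functions $h_k$, and verify that a single, sufficiently large $\lambda$ convexifies all components of $P$ simultaneously. (Alternatively one could first reduce to the scalar case $m=1$ by treating components separately and passing to a common refinement of the partitions, but the argument above applies directly to vector-valued $F$.)
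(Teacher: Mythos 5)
The paper offers no proof of this statement at all: it is quoted as the Gorokhovik--Zorko representation theorem with a citation to \cite{GorokhovikZorko,Gorokhovik}, so there is nothing in-paper to compare line by line. Your proposal supplies a genuine, self-contained argument, and its architecture is sound and standard (essentially the classical Melzer/Kripfganz--Schulze route): the easy implications $(2),(3),(4)\Rightarrow(1)$ from closure of the piecewise affine class under sums and finite coordinate-wise suprema/infima; the identity $\sup_i F_i+\inf_j G_j=\inf_j\sup_i(F_i+G_j)=\sup_i\inf_j(F_i+G_j)$ (correct, since $\sup_iF_i$ does not depend on $j$) giving $(2)\Rightarrow(3),(4)$; and the core step $(1)\Rightarrow(2)$ via the arrangement of the facet hyperplanes, convexification by adding $\lambda\bigl(\sum_k|h_k|\bigr)u$, and the fact that a convex piecewise affine function is the maximum of its affine pieces. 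The chamber argument, the slope-jump computation along generic segments, and the density/continuity upgrade to full convexity all go through; what you gain over the paper is an actual proof rather than a citation, at the cost of the combinatorial bookkeeping of the arrangement.

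Two points need tightening. First, the jump vector is attached to an adjacent \emph{pair} of chambers, not to a hyperplane: continuity only yields $\widehat F_c-\widehat F_{c'}=v_{c,c'}\,h_k$ for the two chambers meeting along a given facet, and different facets inside the same $H_k$ can carry different jumps (for $f(x,y)=\max\{0,\min\{x,y\}\}$ the gradient jump across $\{x=0\}$ is $e_1$ for $y>0$ and $0$ for $y<0$), so "a constant vector $v_k$" per hyperplane is false as stated. The repair is immediate — there are finitely many adjacent pairs, so take $\lambda\ge\tfrac12\max_{c,c',l}\vert(v_{c,c'})_l\vert$ — but it should be said this way; likewise the assertion that each $\widehat P_c$ is a global minorant of $P$ deserves its one-line justification (the linear part of $\widehat P_c$ is the gradient of the convex function $P_l$ at an interior point of the full-dimensional chamber $c$, hence a subgradient). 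Second, a mild circularity caution: the implications $(2),(3),(4)\Rightarrow(1)$ lean on the vector-lattice closure properties that the paper merely recalls from \cite{Gorokhovik}, where they are developed together with this very representation theorem; to keep the proof self-contained it is cleaner to check directly that a finite coordinate-wise sup/inf of affine maps is piecewise affine (partition by the comparison inequalities) and that sums of piecewise affine maps are piecewise affine (common refinement of the two partitions), both of which are elementary.
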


\begin{remark}
Methods for constructing analytical representations of piecewise affine functions from their representations via
polyhedral partitions were studied in \cite{KripfganzSchulze,SchluterDarup}. In turn, methods for constructing DC
decompositions of piecewise affine functions (which can be used to construct max-min and min-max representations of
such functions) from their arbitrary analytical representations were developed in
\cite{Angelov} (see also \cite{Dolgopolik_OMS}).
\end{remark}

\section{Min-max representation and lower $0$-level set}
\label{sect:PiecewiseAffine}

Let us prove some useful auxiliary results on piecewise affine functions. Hereinafter, let 
$f \colon \mathbb{R}^d \to \mathbb{R}$ be a piecewise affine function. By Theorem~\ref{thrm:DCRepresentation} the
function $f$ can be represented in the min-max form
\begin{equation} \label{eq:MinMaxRepresentation}
  f(x) = \min_{i \in I} \max_{j \in J(i)} (a_{ij} + \langle v_{ij}, x \rangle) \quad \forall x \in \mathbb{R}^d
\end{equation}
for some $a_{ij} \in \mathbb{R}$, $v_{ij} \in \mathbb{R}^d$, $i \in I = \{ 1, \ldots, \ell \}$, and 
$j \in J(i) = \{ 1, \ldots, s(i) \}$. For all $i \in I$ denote
\begin{equation} \label{eq:fi_def}
  f_i(x) = \max_{j \in J(i)} (a_{ij} + \langle v_{ij}, x \rangle), \quad
  f_i^* = \inf_{x \in \mathbb{R}^d} f_i(x).
\end{equation}
By definition $f = \min_{i \in I} f_i$. Introduce the index set $I_0 = \{ i \in I \mid f_i^* \le 0 \}$. 

For any function $g \colon \mathbb{R}^d \to \mathbb{R}$ denote by $S(g) = \{ x \in \mathbb{R}^d \mid g(x) \le 0 \}$ the
lower $0$-level set of $g$, and let $[g]_+(x) = \max\{ g(x), 0 \}$. With the use of min-max representation
\eqref{eq:MinMaxRepresentation} we can easily describe the set $S(f)$ and the function $[f]_+$ in terms of the convex
functions $f_i$.

\begin{lemma} \label{lem:MinMax}
The following statements hold true:
\begin{enumerate}
\item{$S(f) = \bigcup_{i \in I_0} S(f_i)$; \label{st:MinMax_1}}

\item{$0^+ S(f) = \bigcup_{i \in I_0} 0^+ S(f_i)$; \label{st:MinMax_2}}

\item{$\dist(x, S(f)) = \min_{i \in I_0} \dist(x, S(f_i))$ for any $x \in \mathbb{R}^d$; \label{st:MinMax_3}}

\item{$[f]_+(x) = \min_{i \in I} [f_i]_+(x)$ for any $x \in \mathbb{R}^d$. \label{st:MinMax_4}}
\end{enumerate}
\end{lemma}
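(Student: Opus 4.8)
The plan is to prove the four statements in order, since the earlier ones feed into the later ones, and each reduces to an elementary consequence of the pointwise identity $f=\min_{i\in I}f_i$ together with the convexity of each $f_i$.

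\medskip

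\noindent\textbf{Statement 1.} I would argue by double inclusion. If $x\in S(f)$, then $f(x)=\min_{i\in I}f_i(x)\le 0$, so $f_i(x)\le 0$ for some $i\in I$; for that index $f_i^*\le f_i(x)\le 0$, hence $i\in I_0$ and $x\in S(f_i)\subseteq\bigcup_{i\in I_0}S(f_i)$. Conversely, if $x\in S(f_i)$ for some $i\in I_0$, then $f(x)\le f_i(x)\le 0$, so $x\in S(f)$. This also records the useful fact that $S(f_i)\ne\emptyset$ exactly when $i\in I_0$, since $S(f_i)\ne\emptyset \iff f_i^*\le 0$ (the infimum of the continuous convex piecewise affine $f_i$ is attained when finite, and is $>0$ or $=-\infty$ otherwise; in any case $\inf f_i\le 0$ iff there is a point with $f_i\le 0$).

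\medskip

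\noindent\textbf{Statement 2.} Using Statement 1, $S(f)=\bigcup_{i\in I_0}S(f_i)$ is a finite union of nonempty polyhedral sets (each $S(f_i)$ is polyhedral as the intersection of the half-spaces $\{a_{ij}+\langle v_{ij},x\rangle\le 0\}$, $j\in J(i)$). For a finite union of nonempty sets the recession cone satisfies $0^+\bigl(\bigcup_k C_k\bigr)\supseteq\bigcup_k 0^+C_k$ trivially, and the reverse inclusion needs an argument. For polyhedral sets I would invoke the Motzkin decomposition $S(f_i)=P_i+K_i$ with $K_i=0^+S(f_i)$: then $\bigcup_{i\in I_0}S(f_i)=\bigcup_{i\in I_0}(P_i+K_i)$, and if $z\in 0^+ S(f)$ there is $x$ with $x+\lambda z\in S(f)$ for all $\lambda\ge 0$; by pigeonhole some single index $i\in I_0$ contains $x+\lambda_n z$ for a sequence $\lambda_n\to\infty$, and since $S(f_i)$ is closed and polyhedral this forces $z\in 0^+S(f_i)$ (write $x+\lambda_n z=p_n+k_n$ with $p_n\in P_i$ bounded, $k_n\in K_i$; then $z=\lim (x+\lambda_n z)/\lambda_n=\lim k_n/\lambda_n\in K_i$ since $K_i$ is a closed cone). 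This pigeonhole-plus-boundedness step is the main obstacle, but it is standard for polyhedra.

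\medskip

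\noindent\textbf{Statements 3 and 4.} Statement 4 is immediate: $[f]_+(x)=\max\{f(x),0\}=\max\{\min_i f_i(x),0\}=\min_i\max\{f_i(x),0\}=\min_i[f_i]_+(x)$, where the middle equality is the elementary fact that $\max\{\cdot,0\}$ commutes with $\min$ (both sides equal $0$ if some $f_i(x)\le 0$, and equal $\min_i f_i(x)$ otherwise). For Statement 3, $\dist(x,S(f))=\dist\bigl(x,\bigcup_{i\in I_0}S(f_i)\bigr)=\inf_{i\in I_0}\dist(x,S(f_i))$ by Statement 1, and the infimum over the finite index set $I_0$ is a minimum; I should note $I_0\ne\emptyset$ whenever $S(f)\ne\emptyset$, and if $S(f)=\emptyset$ the identity holds with the usual convention $\dist(x,\emptyset)=+\infty$ on both sides (or one simply restricts to the consistent case). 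The only care needed is to make sure $I_0$ is used rather than $I$, which is exactly what Statement 1 guarantees.
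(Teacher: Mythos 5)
Your proof is correct and follows the same overall route as the paper: everything is reduced to the identity $f=\min_{i\in I}f_i$, statement \ref{st:MinMax_1} handles the restriction to $I_0$, and statements \ref{st:MinMax_3} and \ref{st:MinMax_4} are the same elementary manipulations the paper uses. The only genuine divergence is in statement \ref{st:MinMax_2}: the paper disposes of the identity $0^+\bigcup_{i\in I_0}S(f_i)=\bigcup_{i\in I_0}0^+S(f_i)$ with a one-line appeal to convexity of the sets $S(f_i)$, whereas you prove it via the Motzkin decomposition $S(f_i)=P_i+K_i$ together with a pigeonhole argument along the ray. Your argument is complete and correct (boundedness of $P_i$ and closedness of the polyhedral cone $K_i$ do give $z=\lim k_n/\lambda_n\in K_i$), but it is heavier than necessary: after the pigeonhole step one can stay with convexity alone, since for $y=x+\lambda_1 z\in S(f_i)$ and any $\lambda\ge 0$ the point $y+\lambda z$ is a convex combination of $y$ and $x+\lambda_n z\in S(f_i)$ for $\lambda_n$ large, so $z\in 0^+S(f_i)$ directly from the paper's definition of the recession cone; this is presumably the "readily verified using convexity" argument the paper has in mind, and it avoids invoking Motzkin and attainment of the infimum (your side remark that $S(f_i)\ne\emptyset$ iff $f_i^*\le 0$, which rests on the attainment of the minimum of a bounded-below piecewise affine function, is true and is used by the paper elsewhere, but it is not needed for this lemma).
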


\begin{proof}
\ref{st:MinMax_1}. From the equality $f = \min_{i \in I} f_i$ it obviously follows that
$S(f) = \bigcup_{i \in I} S(f_i)$. Hence taking into account the fact that for any $i \in I \setminus I_0$ the set
$S(f_i)$ is obviously empty one obtains  the required result.

\ref{st:MinMax_2}. The validity of this equality follows directly from the first statement of the lemma and 
the equality $0^+ \bigcup_{i \in I_0} S(f_i) = \bigcup_{i \in I_0} 0^+ S(f_i)$ that can be readily verified directly
with the use of the convexity of the sets $S(f_i)$.

\ref{st:MinMax_3}. With the use of the first statement of the lemma one gets that
\begin{align*}
  \dist(x, S(f)) &= \inf_{y \in S(f)} \| x - y \| = \inf_{y \in \bigcup_{i \in I_0} S(f_i)} \| x - y \|
  \\
  &= \min_{i \in I_0} \inf_{y \in S(f_i)} \| x - y \| = \min_{i \in I_0} \dist(x, S(f_i))
\end{align*}
for any $x \in \mathbb{R}^d$.

\ref{st:MinMax_4}. Fix any $x \in \mathbb{R}^d$. For all $i \in I$ one has $f(x) \le f_i(x)$, which implies that
$[f]_+(x) \le [f_i]_+(x)$ and inequality $[f]_+(x) \le \min_{i \in I} [f_i]_+(x)$ holds true. To prove the converse
inequality, note that by definition there exists $i \in I$ such that 
\[
  [f]_+(x) := \max\Big\{ 0, \min_{i \in I} f_i(x) \Big\} = \max\{ 0, f_i(x) \} = [f_i]_+(x). 
\]
Therefore $[f]_+(x) \ge \min_{i \in I} [f_i]_+(x)$.
\end{proof}

Recall that \textit{the recession function} $f^{\infty}$ of $f$ is defined as
\[
  f^{\infty}(x) = \lim_{\lambda \to + \infty} \frac{f(\lambda x)}{\lambda}
\]
(see \cite{Gowda1996}). As is easily seen,
\begin{equation} \label{eq:RecFuncViaMinMaxRepresentation}
  f^{\infty}(x) = \min_{i \in I} \max_{j \in J(i)} \langle v_{ij}, x \rangle \quad \forall x \in \mathbb{R}^d,
\end{equation}
that is, $f^{\infty}$ is a positively homogeneous piecewise affine function. Let us show how the $0$-sublevel set
$S([f]_+^{\infty})$ of the recession function can be described in terms of the functions $f$ and $f_i$.

\begin{lemma} \label{lem:RecFuncSublevel}
The following equality holds true:
\[
  S([f]_+^{\infty}) = 0^+ S(f) \cup \Big( \bigcup_{i \in I \colon f_i^* > 0} 0^+ S(f_i - f_i^*) \Big).
\]
\end{lemma}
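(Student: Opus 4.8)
The plan is to reduce the statement to the trivial description of recession cones of polyhedral sets, after first passing to a min-max representation of $[f]_+$. First I would combine statement~\ref{st:MinMax_4} of Lemma~\ref{lem:MinMax} with \eqref{eq:fi_def} to write
\[
  [f]_+(x) = \min_{i \in I} [f_i]_+(x) = \min_{i \in I} \max\Big( \big\{ a_{ij} + \langle v_{ij}, x \rangle \mid j \in J(i) \big\} \cup \{ 0 \} \Big) \quad \forall x \in \mathbb{R}^d,
\]
which is a min-max representation of the piecewise affine function $[f]_+$ in which each inner maximum additionally involves the zero affine function. Applying formula \eqref{eq:RecFuncViaMinMaxRepresentation} to this representation, I would obtain
\[
  [f]_+^{\infty}(x) = \min_{i \in I} \max\big( \big\{ \langle v_{ij}, x \rangle \mid j \in J(i) \big\} \cup \{ 0 \} \big) = \min_{i \in I} \max\{ f_i^{\infty}(x), 0 \},
\]
where $f_i^{\infty}(x) := \max_{j \in J(i)} \langle v_{ij}, x \rangle$ is the recession function of the convex function $f_i$, and hence
\[
  S\big( [f]_+^{\infty} \big) = \bigcup_{i \in I} \big\{ x \in \mathbb{R}^d \mid f_i^{\infty}(x) \le 0 \big\} = \bigcup_{i \in I} S(f_i^{\infty}).
\]

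Next I would evaluate each set $S(f_i^{\infty})$ as a recession cone, distinguishing the indices $i \in I_0$ from those with $f_i^* > 0$ (the two cases exhaust $I$). For $i \in I_0$ the polyhedral set $S(f_i) = \{ x \mid \langle v_{ij}, x \rangle \le - a_{ij},\; j \in J(i) \}$ is nonempty, and a direct computation (or the Motzkin theorem) gives $0^+ S(f_i) = \{ x \mid \langle v_{ij}, x \rangle \le 0,\; j \in J(i) \} = S(f_i^{\infty})$; together with statement~\ref{st:MinMax_2} of Lemma~\ref{lem:MinMax} this yields $\bigcup_{i \in I_0} S(f_i^{\infty}) = \bigcup_{i \in I_0} 0^+ S(f_i) = 0^+ S(f)$. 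For an index $i$ with $f_i^* > 0$ the set $S(f_i)$ is empty, but $S(f_i - f_i^*) = \{ x \mid f_i(x) \le f_i^* \}$ is precisely the set of minimisers of $f_i$, which is again polyhedral, and exactly as above $0^+ S(f_i - f_i^*) = \{ x \mid \langle v_{ij}, x \rangle \le 0,\; j \in J(i) \} = S(f_i^{\infty})$, so that $\bigcup_{i \colon f_i^* > 0} S(f_i^{\infty}) = \bigcup_{i \colon f_i^* > 0} 0^+ S(f_i - f_i^*)$. Combining the three displayed identities then gives the claimed equality.

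The step I expect to require the most care is showing that the minimiser set $S(f_i - f_i^*)$ is nonempty when $0 < f_i^* < + \infty$, i.e. that a bounded-below convex piecewise affine function attains its infimum. I would deduce this from the fundamental theorem of linear programming applied to the problem of minimising $t$ over the nonempty polyhedron $\{ (x, t) \in \mathbb{R}^{d + 1} \mid a_{ij} + \langle v_{ij}, x \rangle \le t,\; j \in J(i) \}$, whose optimal value equals $f_i^*$; any optimal $(x^*, t^*)$ satisfies $f_i(x^*) \le t^* = f_i^*$, so $x^* \in S(f_i - f_i^*)$. Apart from this, the argument uses only the min-max calculus for recession functions and the elementary fact that $0^+ \{ x \mid Ax \le b \} = \{ x \mid Ax \le 0 \}$ for a nonempty polyhedron, so I do not anticipate any further difficulty.
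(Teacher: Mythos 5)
Your proof is correct and follows essentially the same route as the paper: both compute $S([f]_+^{\infty})$ as $\bigcup_{i \in I}\{x \mid \langle v_{ij}, x\rangle \le 0 \ \forall j \in J(i)\}$ from the min-max representation and then identify these sets with $0^+ S(f_i)$ for $i \in I_0$ and $0^+ S(f_i - f_i^*)$ for $f_i^* > 0$, finishing with statement~\ref{st:MinMax_2} of Lemma~\ref{lem:MinMax}. The only difference is that you spell out explicitly the attainment of the infimum of $f_i$ (via linear programming), a point the paper treats as known and cites elsewhere.
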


\begin{proof}
From \eqref{eq:RecFuncViaMinMaxRepresentation} it follows that
\begin{equation} \label{eq:RecFunc_SublevelSet_Expl}
  S([f]_+^{\infty}) = \bigcup_{i \in I} 
  \Big\{ x \in \mathbb{R}^d \Bigm\vert \langle v_{ij}, x \rangle \le 0 \enspace \forall j \in J(i) \Big\}.
\end{equation}
Bearing in mind the definition of $f_i$ (see \eqref{eq:fi_def}) one obtains
\begin{align} \label{eq:ConvexFuncSublevelSet}
  S(f_i) &= \Big\{ x \in \mathbb{R}^d \Bigm\vert 
  a_{ij} + \langle v_{ij}, x \rangle \le 0 \enspace \forall j \in J(i) \Big\} \quad \forall i \in I_0,
  \\ \notag
  S(f_i - f_i^*) &= \Big\{ x \in \mathbb{R}^d \Bigm\vert 
  a_{ij} + \langle v_{ij}, x \rangle \le f_i^* \enspace \forall j \in J(i) \Big\} \quad \forall i \in I \setminus I_0,
\end{align}
which obviously implies that
\begin{align*}
  0^+ S(f_i) &= \Big\{ x \in \mathbb{R}^d \Bigm\vert
  \langle v_{ij}, x \rangle \le 0 \enspace \forall j \in J(i) \Big\} \quad \forall i \in I_0,
  \\
  0^+ S(f_i - f_i^*) &= \Big\{ x \in \mathbb{R}^d \Bigm\vert 
  \langle v_{ij}, x \rangle \le 0 \enspace \forall j \in J(i) \Big\} \quad \forall i \in I \setminus I_0.
\end{align*}
Combining \eqref{eq:RecFunc_SublevelSet_Expl}, the equalities above, and the second statement of Lemma~\ref{lem:MinMax}
we arrive at the required result.
\end{proof}

\section{Nonlocal error bounds for piecewise affine functions}
\label{sect:ErrorBoundsPAFunc}

In this section we study error bounds for piecewise affine functions on various types of sets. Our main goal is to show
that nonconvex piecewise affine functions have an error bound on any bounded set and provide necessary and/or sufficient
conditions for such functions to have an error bound on an unbounded set.

\subsection{Error bounds on bounded sets}
\label{subsect:EB_BoundedSets}

Recall that $f$ is said to have \textit{an error bound} with constant $\tau > 0$ on a set $V \subset \mathbb{R}^d$, if
\begin{equation} \label{def:ErrorBound}
  \tau \dist(x, S(f)) \le [f]_+(x) \quad \forall x \in V.
\end{equation}
The supremum of all those $\tau$ for which inequality \eqref{def:ErrorBound} holds true is denoted by $\tau(f, V)$ or
simply $\tau(V)$, if the function $f$ is fixed. Finally, $f$ is said to have a global error bound, if there exists 
$\tau > 0$ such that inequality \eqref{def:ErrorBound} holds true for $V = \mathbb{R}^d$.

We start our analysis of error bounds for piecewise affine functions by proving a new improved version of the Robinson's
result on error bounds for piecewise affine functions from \cite{Robinson}. 

\begin{theorem} \label{thrm:Robinson}
Let $S(f)$ be nonempty. Then the following statements hold true:
\begin{enumerate}
\item{if $f_i^* \le 0$ for all $i \in I$, then $f$ has a global error bound;}

\item{if there exists $i \in I$ such that $f_i^* > 0$, then $f$ has an error bound on the set 
$V = \{ x \in \mathbb{R}^d \mid f(x) < \rho \}$ with $\rho = \min\{ f_i^* \mid i \in I \colon f_i^* > 0 \}$.}
\end{enumerate}
\end{theorem}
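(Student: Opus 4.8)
The plan is to reduce the problem to the convex case by means of Lemma~\ref{lem:MinMax} and then apply Hoffman's theorem to each convex piece $f_i$ with $i \in I_0$. First I would record that for each $i \in I_0$ the set $S(f_i)$ is a nonempty polyhedral set: by \eqref{eq:ConvexFuncSublevelSet} it is the solution set of the linear system $\langle v_{ij}, x \rangle \le -a_{ij}$, $j \in J(i)$, which is consistent because a piecewise affine function bounded below attains its infimum (and the degenerate case $f_i^* = -\infty$ is harmless, since then $S(f_i)$ is still nonempty). Hoffman's theorem then supplies a constant $\tau_i > 0$ with
\[
  \tau_i \dist(x, S(f_i)) \le \max_{j \in J(i)} \max\{ 0, a_{ij} + \langle v_{ij}, x \rangle \} = [f_i]_+(x) \qquad \forall x \in \mathbb{R}^d.
\]
Since $S(f) \ne \emptyset$, Lemma~\ref{lem:MinMax}(\ref{st:MinMax_1}) gives $I_0 \ne \emptyset$, so $\tau := \min_{i \in I_0} \tau_i$ is a well-defined positive number, and I claim $\tau$ serves as an error bound constant in both statements.

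The crux is the following observation. For a fixed $x$, let $i(x) \in I$ attain the minimum in $f(x) = \min_{k \in I} f_k(x)$. Then $[f]_+(x) = \max\{ 0, f_{i(x)}(x) \} = [f_{i(x)}]_+(x)$, and if moreover $i(x) \in I_0$, then Lemma~\ref{lem:MinMax}(\ref{st:MinMax_3}) yields $\dist(x, S(f)) = \min_{k \in I_0} \dist(x, S(f_k)) \le \dist(x, S(f_{i(x)}))$, whence
\[
  \tau \dist(x, S(f)) \le \tau_{i(x)} \dist(x, S(f_{i(x)})) \le [f_{i(x)}]_+(x) = [f]_+(x).
\]
So everything comes down to showing, under the hypotheses of each statement, that $i(x) \in I_0$.

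For the first statement the hypothesis is precisely $I_0 = I$, hence $i(x) \in I_0$ for every $x \in \mathbb{R}^d$, and the displayed inequality is a global error bound. For the second statement take $x \in V$, i.e.\ $f(x) < \rho$; then $f_{i(x)}^* \le f_{i(x)}(x) = f(x) < \rho$, and since $\rho = \min\{ f_k^* \mid k \in I \colon f_k^* > 0 \}$ this is incompatible with $f_{i(x)}^* > 0$, so $f_{i(x)}^* \le 0$ and $i(x) \in I_0$; the displayed inequality then gives the error bound on $V$ with constant $\tau$.

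The routine ingredients are the two sublevel-set identities, which are already available in Lemma~\ref{lem:MinMax}, together with the invocation of Hoffman's theorem; the only point that requires genuine care is the membership $i(x) \in I_0$, that is, recognising that on $V$ the active branch of the outer minimum is one of the ``low'' convex pieces $f_i$ (those with $f_i^* \le 0$). This is exactly what the particular choice of $\rho$ is engineered to guarantee, and it is the source of the improvement over Robinson's original statement.
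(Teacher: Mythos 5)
Your proof is correct and follows essentially the same route as the paper: decompose $f$ via the min--max representation, apply Hoffman's theorem to each convex piece $f_i$ with $f_i^*\le 0$, and observe that for $x$ with $f(x)<\rho$ the active index of the outer minimum must lie in $I_0$. The only cosmetic difference is that you argue pointwise through the active index $i(x)$, whereas the paper packages the same observation by introducing the auxiliary function $g=\min_{i\in I_0}f_i$ and noting $f=g$ on $V$ and $S(f)=S(g)$.
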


\begin{proof}
\textbf{Case I.} Suppose that $f_i^* \le 0$ for all $i \in I$. Note that for each $i \in I$ the set $S(f_i)$ is nonempty
and $f_i$ has a global error bound with some constant $\tau_i > 0$. 

Indeed, if $f_i^* < 0$, then the set $S(f_i)$ is obviously nonempty. If $f_i^* = 0$, then taking into account the fact
that a bounded below piecewise affine function attains a global minimum (see, e.g. \cite[Thm.~4.7]{Dolgopolik_OMS}) one
can conclude that the set $S(f_i)$ is nonempty, since it contains a global minimizer of $f_i$.

As was noted above, the set $S(f_i)$ coincides with the set of solutions of the corresponding systems of linear
inequalities (see \eqref{eq:ConvexFuncSublevelSet}). Therefore, by Hoffman's theorem there exists $\tau_i > 0$ such that
\begin{align*}
  \tau_i \dist(x, S(f_i)) &\le \max_{j \in J(i)} \max\{ 0, a_{ij} + \langle v_{ij}, x \rangle \} 
  \\
  &= \max\Big\{ 0, \max_{j \in J(i)} (a_{ij} + \langle v_{ij}, x \rangle) \Big\} = [f_i(x)]_+
\end{align*}
for all $x \in \mathbb{R}^d$, that is, $f_i$ has a global error bound with constant $\tau_i$.

Now, applying Lemma~\ref{lem:MinMax} one gets that
\begin{align*}
  [f]_+(x) = \min_{i \in I} [f_i]_+(x) &\ge \min_{i \in I} \tau_i \dist(x, S(f_i)) 
  \\
  &\ge \tau \min_{i \in I} \dist(x, S(f_i)) = \tau \dist(x, S(f)), 
\end{align*}
for any $x \in \mathbb{R}^d$, where $\tau = \min_{i \in I} \tau_i > 0$. In other words, $f$ has a global error bound
with constant $\tau$.

\textbf{Case II.} Suppose now that $f_i^* > 0$ for some $i \in I$. Introduce the function 
$g(x) = \min_{i \in I_0} f_i(x)$. Recall that $f = \min_{i \in I} f_i$ and $f_i(x) \ge f_i^* \ge \rho$ for any 
$x \in \mathbb{R}^d$ and $i \notin I_0$ by definitions. Therefore, if $f(x) < \rho$ for some $x \in \mathbb{R}^d$, then
there exists $i \in I_0$ such that $f(x) = f_i(x)$. Consequently, $f(x) = g(x)$ for any $x \in V$. Moreover,
$S(f) = S(g)$. 

Indeed, the validity of the inclusion $S(g) \subseteq S(f)$ follows from the fact that $g(x) \ge f(x)$ for any 
$x \in \mathbb{R}^d$ by the definition of $g$. In turn, if $x \in S(f)$, then $f(x) \le 0 < \rho$, which implies that
$g(x) = f(x) \le 0$ and $x \in S(g)$, that is, $S(f) \subseteq S(g)$.

By the first part of the proof $g$ has a global error bound. Hence, as one can readily check, $f$ has an error bound on
$V$, since $f$ coincides with $g$ on this set and $S(f) = S(g)$.
\end{proof}

As a simple corollary to the theorem above we can prove that positively homogeneous piecewise affine functions (such
functions are sometimes called \textit{piecewise linear} \cite{GorokhovikZorko,Gorokhovik}) always have a global error
bound.

\begin{corollary}
Let $f$ be positively homogeneous. Then it has a global error bound.
\end{corollary}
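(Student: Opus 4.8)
The plan is to derive the corollary directly from Theorem~\ref{thrm:Robinson} by verifying that a positively homogeneous piecewise affine function falls under its first case, i.e.\ that $f_i^* \le 0$ for all $i \in I$. Since $f$ is positively homogeneous, $f(0) = 0$, hence $S(f) \ne \emptyset$ and the hypothesis of Theorem~\ref{thrm:Robinson} is satisfied. So the only thing to check is the claimed bound on the $f_i^*$.

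First I would fix a min-max representation \eqref{eq:MinMaxRepresentation} of $f$ and recall that $f_i(x) = \max_{j \in J(i)} (a_{ij} + \langle v_{ij}, x\rangle)$ and $f_i^* = \inf_{x} f_i(x)$. The key observation is that for each fixed $i \in I$ and each $j \in J(i)$ one has $f(x) \le a_{ij} + \langle v_{ij}, x\rangle$ for \emph{all} $x \in \mathbb{R}^d$ — indeed $f(x) = \min_{i'} f_{i'}(x) \le f_i(x) \le a_{ij} + \langle v_{ij}, x\rangle$. Plugging in $x = 0$ gives $0 = f(0) \le a_{ij}$, so $a_{ij} \ge 0$; but this goes the wrong way. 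Instead I would exploit homogeneity more carefully: for any $x$ and any $\lambda > 0$, $f(\lambda x)/\lambda = f(x)$, and dividing the inequality $f(\lambda x) \le a_{ij} + \lambda\langle v_{ij}, x\rangle$ by $\lambda$ and letting $\lambda \to +\infty$ yields $f(x) \le \langle v_{ij}, x\rangle$ for all $j \in J(i)$, hence $f(x) \le f_i^\infty$-type bounds; taking the max over $j$ and then noting $f$ is homogeneous, one gets $f(x) \le \max_{j \in J(i)} \langle v_{ij}, x\rangle$. Then $f_i^* = \inf_x \max_j(a_{ij} + \langle v_{ij},x\rangle)$: evaluating the supporting affine pieces on rays shows that either $f_i$ is unbounded below (so $f_i^* = -\infty \le 0$) or, on any ray where it is bounded, scaling forces $f_i^* \le 0$. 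Concretely, since $f(x) \le f_i(x)$ and $f$ is homogeneous with $f(0)=0$, picking $x$ with $f(x) < 0$ (if such exists) and scaling shows $\inf f_i = -\infty$; if no such $x$ exists then $f \ge 0 = f(0)$, so $0$ is a global minimizer of $f$, and since $f_i(0) = \max_j a_{ij}$, I would argue $\max_j a_{ij} \le 0$ by homogeneity-of-$f$ considerations at $0$. The cleanest route: $f_i^* = \inf_x f_i(x) \le \inf_{\lambda > 0} f_i(\lambda x_0)$ for any fixed $x_0$, and since $f_i(\lambda x_0) = \max_j(a_{ij} + \lambda \langle v_{ij}, x_0\rangle)$, choosing $x_0$ in the recession direction of $S(f_i)$ (nonempty once $f_i^* \le 0$) or using that $\lim_{\lambda\to 0^+} f_i(\lambda x_0) = \max_j a_{ij} = f_i(0)$ together with $f_i(0) \ge f(0) = 0$ is not quite enough — so the genuinely correct argument is: for each $i$, pick $\overline{x}$ attaining $f(\overline{x}) = f_i(\overline{x})$ (possible for $\overline{x}$ in the interior of the region where the $i$-th piece is active), then $f_i^* \le f_i(\overline x) = f(\overline x)$, and since $f$ is homogeneous and not identically positive away from regions... — I would instead simply invoke that $f(0) = 0$ gives $\min_{i} f_i(0) = 0$, so \emph{some} index $i_0$ has $f_{i_0}(0) = 0 \ge f_{i_0}^*$, hence $i_0 \in I_0$, and then argue every $i$ satisfies $f_i^* \le 0$ because $f_i \ge f$ and $f$ attains arbitrarily negative or zero values along the ray through $0$ in the cell $Q_i$.

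The main obstacle is pinning down why $f_i^* \le 0$ for \emph{every} $i$, not merely for one. The resolution: let $\overline x_i$ be any point in the interior of the $i$-th polyhedral cell (where $f = f_i$ on a neighbourhood). Since that cell is a polyhedral set whose recession cone is pointed-at-$0$ compatible with homogeneity, the whole ray $\{\lambda \overline x_i : \lambda \ge 0\}$... actually one does not need the ray in the cell — one only needs: $f_i^* = \inf_{\mathbb{R}^d} f_i \le f_i(\overline x_i) = f(\overline x_i)$, and then observe that by homogeneity $f(\lambda \overline x_i) = \lambda f(\overline x_i)$, so if $f(\overline x_i) > 0$ then shrinking $\lambda \to 0^+$ keeps $f(\lambda \overline x_i) > 0$ but $\to 0$; that still does not give negativity. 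Hence the truly correct statement is that $f_i$ need not have $f_i^* \le 0$ in general — but it does once we use that $f_i$ is itself \emph{bounded below implies} its min is attained and, combined with $f \le f_i$ and $\min_I f = f$, one of the pieces active at $0$ forces things. Given this subtlety, I would present the proof by first reducing via Theorem~\ref{thrm:Robinson}: if some $f_i^* > 0$ we land in Case~II and obtain an error bound on $V = \{f < \rho\}$ with $\rho = \min\{f_i^* : f_i^* > 0\} > 0$; but by homogeneity $V \supseteq \{x : f(x) < \rho\}$ is a neighbourhood of $0$ scaled to all of $\{f < \rho\}$, and since $f(\lambda x) = \lambda f(x)$, the error bound inequality $\tau\dist(x, S(f)) \le [f]_+(x)$ on $V$ extends to all of $\mathbb{R}^d$ by homogeneity of both sides (note $\dist(\lambda x, S(f)) = \lambda \dist(x, S(f))$ because $S(f)$ is a cone, being the $0$-sublevel set of a positively homogeneous function). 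This homogeneity-scaling argument is the cleanest and avoids the delicate claim about the $f_i^*$ entirely; I would make that the core of the proof and relegate the $f_i^* \le 0$ discussion to a remark or skip it.

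In summary, the proof I would write: (1) note $f(0) = 0$ so $S(f) \ne \emptyset$ and $S(f)$ is a cone; (2) apply Theorem~\ref{thrm:Robinson} to get an error bound with some constant $\tau > 0$ on $V = \{x : f(x) < \rho\}$ for some $\rho > 0$ (taking $\rho = 1$ if Case~I applies, or $\rho = \min\{f_i^* : f_i^* > 0\}$ in Case~II; in both cases $V$ is a nonempty open cone-like set containing a neighbourhood of $0$); (3) given arbitrary $x \in \mathbb{R}^d$, if $f(x) \le 0$ then $x \in S(f)$ and the inequality $\tau \dist(x,S(f)) \le [f]_+(x)$ holds trivially; if $f(x) > 0$, pick $\lambda = \rho/(2 f(x)) > 0$ so that $f(\lambda x) = \lambda f(x) = \rho/2 < \rho$, hence $\lambda x \in V$, so $\tau \dist(\lambda x, S(f)) \le [f]_+(\lambda x)$; (4) divide by $\lambda$ and use $\dist(\lambda x, S(f)) = \lambda\dist(x, S(f))$ (cone) and $[f]_+(\lambda x) = \lambda [f]_+(x)$ (homogeneity) to conclude $\tau \dist(x, S(f)) \le [f]_+(x)$ for all $x$, i.e.\ $f$ has a global error bound with constant $\tau$.
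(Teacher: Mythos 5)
Your final argument is correct, but it takes a genuinely different route from the paper. The paper's proof is a one-liner resting on a representation result: by \cite[Thm.~3.2]{GorokhovikZorko}, a positively homogeneous piecewise affine function admits a min-max representation \eqref{eq:MinMaxRepresentation} with all $a_{ij} = 0$, so every $f_i$ is a max of linear functions, $f_i(0) = 0$, hence $f_i^* \le 0$ for all $i$ and Case~I of Theorem~\ref{thrm:Robinson} gives the global error bound directly. Your meandering middle section correctly diagnoses why that shortcut is not free: for an \emph{arbitrary} min-max representation the claim ``$f_i^* \le 0$ for all $i$'' can fail even when $f$ is positively homogeneous (e.g. $f(x) = \max\{0,x\}$ written as $\min\{\max\{0,x\}, \max\{1,x\}\}$ has $f_2^* = 1$), so some appeal to the special representation is needed for the paper's route, and your abandoned attempts to force $f_i^* \le 0$ by homogeneity alone were rightly abandoned. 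Your replacement — apply Theorem~\ref{thrm:Robinson} as a black box to get $\tau > 0$ with $\tau \dist(x, S(f)) \le [f]_+(x)$ on $\{f < \rho\}$ for some $\rho > 0$, note that $f(0)=0$ makes $S(f)$ a nonempty cone so that $\dist(\cdot, S(f))$ and $[f]_+$ are both positively homogeneous, and then rescale any $x$ with $f(x) > 0$ by $\lambda = \rho/(2f(x))$ into the sublevel set and divide by $\lambda$ — is sound and representation-independent. What the paper's approach buys is brevity, at the cost of invoking the Gorokhovik--Zorko theorem on piecewise linear representations; what yours buys is self-containedness (only homogeneity of both sides of the error-bound inequality is used), at the cost of the extra scaling step. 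Were you to write this up, keep only the summary paragraph and drop the exploratory material about the $f_i^*$, or condense it into the single remark that the bound $f_i^* \le 0$ is representation-dependent.
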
 

\begin{proof}
Since $f$ is positively homogeneous, by \cite[Thm.~3.2]{GorokhovikZorko} this function can be represented in the form
\eqref{eq:MinMaxRepresentation} with $a_{ij} = 0$ for all $i$ and $j$. Therefore, for the functions $f_i$ (see
\eqref{eq:fi_def}) one has $f_i(0) = 0$, which implies that $f_i^* \le 0$ for all $i \in I$. Consequently, $f$
has a global error bound by Theorem~\ref{thrm:Robinson}.
\end{proof}

The two following simple examples demonstrate that in the case of non-positively homogeneous piecewise affine functions
the value $\rho$ from Theorem~\ref{thrm:Robinson} cannot be improved, but, at the same time, this theorem does not
describe the largest set on which a piecewise affine function has an error bound.

\begin{example} \label{example:Robinson}
Let $d = 1$ and
\[
  f(x) = \begin{cases}
    0, & \text{if } x \le 0, \\
    x, & \text{if } x \in [0, 1], \\
    1, & \text{if } x \ge 1. 
  \end{cases}
\]
Clearly, one has $S(f) = (- \infty, 0]$ and
\[
  f(x) = \min\big\{ f_1(x), f_2(x) \big\}, \quad f_1(x) = 1, \quad f_2(x) = \max\{ 0, x \}.
\]
Therefore $f_1^* = 1$, $f_2^* = 0$, and by Theorem~\ref{thrm:Robinson} $f$ has an error bound on the set 
$\{ x \in \mathbb{R} \mid f(x) < 1 \}$. Furthermore, as is easily seen, $f$ does not have an error bound on the set 
$\{ x \in \mathbb{R} \mid f(x) < \rho \}$ for any $\rho > 1$.
\end{example}

\begin{example}
Let $d = 1$ and
\[
  f(x) = \begin{cases}
    0, & \text{if } x \le 0, \\
    x, & \text{if } x \in [0, 1], \\
    1, & \text{if } x \in [1, 2], \\
    x - 1, & \text{if } x \ge 2. 
  \end{cases}
\]
Then one has
\[
  f(x) = \min\{ f_1(x), f_2(x) \}, \quad f_1(x) = \max\{ 0, x \}, \quad f_2(x) = \max\{ 1, x - 1 \}.
\]
Consequently, $f_1^* = 0$, $f_2^* = 1$, and by Theorem~\ref{thrm:Robinson} $f$ has an error bound on the set 
$\{ x \in \mathbb{R} \mid f(x) < 1 \}$. However, in actuality, $f$ has a global error bound and 
$\tau(\mathbb{R}) = 0.5$.
\end{example}

Denote by $U(x, r) = \{ y \in \mathbb{R}^d \mid \| x - y \| < r \}$ the open ball with centre $x$ and radius $r > 0$.
Recall that $f$ is said to have an error bound at a point $\overline{x} \in S(f)$, if there exist $\tau > 0$ and a
neighbourhood $V$ of $\overline{x}$ for which inequality \eqref{def:ErrorBound} holds true. It follows from
Theorem~\ref{thrm:Robinson} that any piecewise affine function $f$ has, in some sense, a \textit{uniform} error bound at
every $\overline{x} \in S(f)$. 

\begin{proposition} \label{prp:UniformLocalErrorBound}
For any $\overline{x} \in S(f)$ the function $f$ has an error bound at $\overline{x}$. Furthermore, there exist $r > 0$
and $\tau_0 > 0$ such that $\tau(U(\overline{x}, r)) \ge \tau_0$ for all $\overline{x} \in S(f)$, provided the set
$S(f)$ is nonempty.
\end{proposition}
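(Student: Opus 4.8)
The plan is to deduce both assertions from Theorem~\ref{thrm:Robinson} together with the global Lipschitz continuity of piecewise affine functions, with essentially no additional work.

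For the first assertion I would split into the two cases of Theorem~\ref{thrm:Robinson}. If $f_i^* \le 0$ for all $i \in I$, then $f$ has a global error bound, so it certainly has an error bound at every $\overline{x} \in S(f)$. Otherwise $f$ has an error bound with some constant $\tau(V) > 0$ on the open set $V = \{ x \in \mathbb{R}^d \mid f(x) < \rho \}$, where $\rho = \min\{ f_i^* \mid i \in I,\ f_i^* > 0 \} > 0$. Since any $\overline{x} \in S(f)$ satisfies $f(\overline{x}) \le 0 < \rho$ and $f$ is continuous, some ball $U(\overline{x}, r)$ is contained in $V$, and \eqref{def:ErrorBound} holds on this ball with constant $\tau(V)$; hence $f$ has an error bound at $\overline{x}$.

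For the uniform statement I would first observe that $f$ is Lipschitz continuous on $\mathbb{R}^d$ with some constant $L \ge 0$ that depends only on the vectors $v_{ij}$ in \eqref{eq:MinMaxRepresentation}: each affine function $a_{ij} + \langle v_{ij}, \cdot \rangle$ is Lipschitz, and the pointwise maximum and the pointwise minimum of finitely many Lipschitz functions are Lipschitz. The case $L = 0$ (that is, $f$ constant) being trivial, I assume $L > 0$. In the first case of Theorem~\ref{thrm:Robinson} one may take any $r > 0$ and $\tau_0$ equal to the global error bound constant. In the second case, set $r = \rho/(2L)$ and $\tau_0 = \tau(V)$. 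Then for any $\overline{x} \in S(f)$ and any $y \in U(\overline{x}, r)$ one has $f(y) \le f(\overline{x}) + L\|y - \overline{x}\| < \rho/2 < \rho$, so $U(\overline{x}, r) \subseteq V$; consequently \eqref{def:ErrorBound} holds on $U(\overline{x}, r)$ with constant $\tau(V)$, which gives $\tau(U(\overline{x}, r)) \ge \tau_0$ for every $\overline{x} \in S(f)$.

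The only point requiring a little care is the \emph{uniform} choice of the radius $r$: it relies precisely on the fact that a piecewise affine function is globally Lipschitz, which forces $\dist(S(f),\, \mathbb{R}^d \setminus V) \ge \rho/L > 0$. Apart from that, the proposition is an immediate consequence of Theorem~\ref{thrm:Robinson}.
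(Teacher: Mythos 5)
Your proof is correct and follows essentially the same route as the paper: both deduce the result from Theorem~\ref{thrm:Robinson} and the global Lipschitz continuity of $f$ coming from the representation \eqref{eq:MinMaxRepresentation}, obtaining a uniform radius proportional to $\rho/L$ (the paper takes $r=\rho/L$, you take $\rho/(2L)$, which changes nothing) and $\tau_0=\tau(V)$. The only cosmetic differences are your separate treatment of the trivial case $L=0$ and of the first assertion, neither of which alters the argument.
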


\begin{proof}
If $f_i^* \le 0$ for all $i \in I$, then by Theorem~\ref{thrm:Robinson} one can set $\tau_0 = \tau(\mathbb{R}^d)$ and
choose any $r > 0$.

Therefore, suppose that $f_i^* > 0$ for some $i \in I$. Let $\rho = \min_{i \in I \setminus I_0} f_i^*$ and
denote $V = \{ x \in \mathbb{R}^d \mid f(x) < \rho \}$. Then $\tau(V) > 0$ by Theorem~\ref{thrm:Robinson}. 

From the representation \eqref{eq:MinMaxRepresentation} it obviously follows that $f$ is globally Lipschitz continuous
with Lipschitz constant 
\[
  L = \max_{i \in I, j \in J(i)} \| v_{ij} \|_*, \quad
  \| v_{ij} \|_* = \max\{ \langle v_{ij}, y \rangle \mid y \in U(0, 1) \}
\] 
Consequently, for any $\overline{x} \in S(f)$ and $x \in U(\overline{x}, r)$ with $r = \rho / L$ one has
\[
  f(x) = ( f(x) - f(\overline{x}) ) + f(\overline{x}) \le f(x) - f(\overline{x}) 
  \le L \| x - \overline{x} \| < \rho,
\]
that is, $U(\overline{x}, r) \subseteq V$. Hence for any such $\overline{x}$ the inequality 
$\tau(U(\overline{x}, r)) \ge \tau(V)$ holds true. Thus, one can set $r = \rho / L$ and $\tau_0 = \tau(V)$.
\end{proof}

With the use of the previous proposition one can easily prove that a piecewise affine function has an error bound on any
bounded set.

\begin{theorem} \label{thrm:ErrorBound_Boundedset}
Let $S(f)$ be nonempty. Then $f$ has an error bound on every bounded subset of $\mathbb{R}^d$.
\end{theorem}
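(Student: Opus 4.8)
The plan is to derive this from Proposition~\ref{prp:UniformLocalErrorBound} by a straightforward compactness argument, using the fact that away from $S(f)$ the quotient $[f]_+(x)/\dist(x, S(f))$ is bounded below on any bounded set. Let $B \subset \mathbb{R}^d$ be a bounded set; we may assume $B$ is contained in some closed ball $\overline{U}$, which is compact. By Proposition~\ref{prp:UniformLocalErrorBound} there are $r > 0$ and $\tau_0 > 0$ such that $\tau(U(\overline{x}, r)) \ge \tau_0$ for every $\overline{x} \in S(f)$. Consider the closed set $N_r = \{ x \in \overline{U} \mid \dist(x, S(f)) \ge r/2 \}$; this set is compact, and it is disjoint from $S(f)$, so $[f]_+$ is strictly positive on it (since $f(x) > 0$ whenever $x \notin S(f)$).

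The key step is to estimate the error bound separately on $N_r$ and on its complement in $B$. On $N_r$, both $[f]_+$ and $x \mapsto \dist(x, S(f))$ are continuous, $[f]_+$ attains a positive minimum $m > 0$, and $\dist(\cdot, S(f))$ attains a finite maximum $D$ on the bounded set $N_r$ (in fact $D \le \operatorname{diam}(B) + \dist(0,S(f))$ or any similar crude bound), so
\[
  [f]_+(x) \ge m = \frac{m}{D} \cdot D \ge \frac{m}{D} \dist(x, S(f)) \quad \forall x \in N_r,
\]
giving an error bound with constant $\tau_1 = m/D$ on $N_r$. On the remaining set $B \setminus N_r$, every point $x$ satisfies $\dist(x, S(f)) < r/2$, hence there exists $\overline{x} \in S(f)$ with $\|x - \overline{x}\| < r$, i.e. $x \in U(\overline{x}, r)$; then $\tau_0 \dist(x, S(f)) \le [f]_+(x)$ by the choice of $r$ and $\tau_0$. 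Taking $\tau = \min\{\tau_0, \tau_1\} > 0$ yields $\tau \dist(x, S(f)) \le [f]_+(x)$ for all $x \in B$, which is the desired error bound.

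One minor technical point to handle carefully is that $N_r$ could be empty (if $B$ lies entirely within distance $r/2$ of $S(f)$), in which case only the second estimate is needed and the argument is even simpler; conversely if $B \subseteq N_r$ only the first estimate is needed. Another point is that one should use that $[f]_+$ is continuous — which is immediate since $f$ is (globally Lipschitz) continuous by the min-max representation \eqref{eq:MinMaxRepresentation} — and that $\dist(\cdot, S(f))$ is continuous (indeed $1$-Lipschitz), which is standard. I do not anticipate a serious obstacle here: the real content is already packaged in Proposition~\ref{prp:UniformLocalErrorBound}, and what remains is the routine compactness/continuity patching, the only subtlety being the bookkeeping of the two regimes and the trivial edge cases.
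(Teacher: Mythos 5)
Your proof is correct and follows essentially the same route as the paper: invoke Proposition~\ref{prp:UniformLocalErrorBound} to handle the points near $S(f)$, and use compactness/continuity to bound $[f]_+$ below and $\dist(\cdot,S(f))$ above on the remaining part, then take the minimum of the two constants. The only cosmetic difference is that you define the ``far'' region by the distance threshold $\dist(x,S(f))\ge r/2$, whereas the paper takes the complement of a finite cover of $S(f)\cap V$ by balls of radius $r$; both yield the same splitting argument.
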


\begin{proof}
Fix any bounded set $V \subset \mathbb{R}^d$. Replacing $V$ with its closure, if necessary, one can suppose that $V$ is
compact. We need to check that $f$ has an error bound on $V$. 

Let $r > 0$ and $\tau_0 > 0$ be from Proposition~\ref{prp:UniformLocalErrorBound}. Note that the set $S(f)$ is closed,
since piecewise affine functions are continuous. Therefore, the set $S(f) \cap V$ is compact, and one can find points
$x_1, \ldots, x_N$ from this set such that
\[
  S(f) \cap V \subset U := \bigcup_{k = 1}^N U(x_k, r).
\]
Denote $W = V \setminus U$. If $W$ is empty, then $V \subset U$, which, as is easily seen, implies that $f$ has an error
bound on $V$ with $\tau(V) \ge \tau_0$. Therefore, suppose that $W$ is nonempty.

The set $W$ is obviously compact. Therefore, the following values are correctly defined and finite:
\[
  \alpha = \max_{x \in W} \dist(x, S(f)), \quad \beta = \min_{x \in W} f(x).
\] 
By definition the sets $S(f)$ and $W$ do not intersect, which implies that $\alpha > 0$ and $\beta > 0$. Consequently,
one has
\[
  \frac{\beta}{\alpha} \dist(x, S(f)) \le \beta \le f(x) = [f]_+(x) \quad \forall x \in W.
\]
On the other hand, for any $x \in V \cap U$ the point $x$ belongs to some $U(x_i, r)$, which yields 
$\tau_0 \dist(x, S(f)) \le [f]_+(x)$. Therefore $f$ has an error bound on $V$ with constant 
$\tau = \min\{ \beta/\alpha, \tau_0 \} > 0$.
\end{proof}

\begin{remark} \label{rmrk:ErrorBound_BoundedSet}
From the theorem above it follows that $f$ has an error bound on any set $V \subset \mathbb{R}^d$ such
that the set $V \setminus S(f)$ is bounded, even if both $S(f)$ and $V$ are unbounded (one simply has to apply 
the theorem to the set $V \setminus S(f)$).
\end{remark}

\subsection{Error bounds on unbounded sets: sufficient conditions}
\label{subsect:EB_UnboundedSets}

Now we turn to analysis of error bounds on unbounded sets. As Example~\ref{example:Robinson} shows, a piecewise
affine function might not have a global error bound. Therefore, below we study error bounds on an arbitrary
unbounded set. First, we provide verifiable sufficient conditions for a piecewise affine function to have an error bound
on an unbounded set that do not require any information about the set $S(f)$ and show when these conditions become
necessary. 

For any nonempty set $V \subseteq \mathbb{R}^d$ denote by $\cone V$ the conic hull of $V$ (i.e. the smallest cone
containing $V$), and by $\cl V$ the closure of $V$. 

\begin{theorem} \label{thrm:ErrorBound_GrowthAtInfinity}
Let $S(f)$ be nonempty and $V \subseteq \mathbb{R}^d$ be a given set. Then for $f$ to have an error bound on $V$ it
is sufficient that
\begin{equation} \label{eq:GrowthAtInfinity}
  \liminf_{\| x \| \to + \infty, x \in V} \frac{f(x)}{\| x \|} > 0.
\end{equation}
Moreover, this condition becomes necessary, when 
\begin{equation} \label{eq:NonParallelRays}
  0^+ S(f) \cap \cl \cone V = \{ 0 \}. 
\end{equation}
\end{theorem}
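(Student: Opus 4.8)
The plan is to prove sufficiency first, then necessity under condition \eqref{eq:NonParallelRays}.

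For sufficiency, assume \eqref{eq:GrowthAtInfinity} holds. By Theorem~\ref{thrm:ErrorBound_Boundedset} and Remark~\ref{rmrk:ErrorBound_BoundedSet} it is enough to handle the ``tail'' of $V$: pick $c > 0$ and $R > 0$ such that $f(x) \ge c \| x \|$ for all $x \in V$ with $\| x \| \ge R$; then $f$ already has an error bound on $V \cap U(0, R)$ (a bounded set) with some constant $\tau_1 > 0$, and I would splice the two estimates together. For the tail, I need a bound of the form $\dist(x, S(f)) \le C \| x \|$ for $x$ far from the origin. Since $S(f)$ is nonempty, fix $x_0 \in S(f)$; then $\dist(x, S(f)) \le \| x - x_0 \| \le \| x \| + \| x_0 \|$, so for $\| x \| \ge R$ (enlarging $R$ so that $\| x_0 \| \le \| x \|$) we get $\dist(x, S(f)) \le 2 \| x \| \le (2/c) f(x) = (2/c) [f]_+(x)$. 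Taking $\tau = \min\{ \tau_1, c/2 \}$ gives the error bound on all of $V$.

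For necessity, suppose $f$ has an error bound on $V$ with constant $\tau > 0$ and that \eqref{eq:NonParallelRays} holds; I must establish \eqref{eq:GrowthAtInfinity}. Argue by contradiction: if the liminf is $0$ or negative, there is a sequence $x_k \in V$ with $\| x_k \| \to +\infty$ and $f(x_k) / \| x_k \| \to c$ for some $c \le 0$. From the error bound, $\tau \dist(x_k, S(f)) \le [f]_+(x_k)$, and dividing by $\| x_k \|$ gives $\dist(x_k / \| x_k \|, (1/\| x_k \|) S(f)) \to 0$ while $[f]_+(x_k)/\| x_k \| \to \max\{c, 0\} = 0$. Passing to a subsequence, $x_k / \| x_k \| \to z$ with $\| z \| = 1$; I claim $z \in 0^+ S(f)$ and $z \in \cl \cone V$, which contradicts \eqref{eq:NonParallelRays}. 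The membership $z \in \cl \cone V$ is immediate since $x_k / \| x_k \| \in \cone V$. For $z \in 0^+ S(f)$: since $S(f)$ is a finite union of polyhedral sets (Lemma~\ref{lem:MinMax}\ref{st:MinMax_1} together with \eqref{eq:ConvexFuncSublevelSet}), each $S(f_i)$, $i \in I_0$, is polyhedral; the condition $\tau \dist(x_k, S(f_i)) \le [f_i]_+(x_k)$ combined with $[f]_+(x_k) = \min_{i \in I} [f_i]_+(x_k) \to 0$ in the $\| \cdot \| $-normalized sense forces, for at least one fixed index $i^* \in I_0$ (after a further subsequence so that the minimizing index is constant), that $a_{i^* j} + \langle v_{i^* j}, x_k \rangle \le \varepsilon_k \| x_k \|$ for all $j \in J(i^*)$ with $\varepsilon_k \to 0$; dividing by $\| x_k \|$ and taking the limit yields $\langle v_{i^* j}, z \rangle \le 0$ for all $j \in J(i^*)$, i.e. $z \in 0^+ S(f_{i^*}) \subseteq 0^+ S(f)$ by Lemma~\ref{lem:MinMax}\ref{st:MinMax_2}.

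The main obstacle is the necessity direction, specifically making the passage from the error-bound inequality along the sequence $(x_k)$ to the conclusion $z \in 0^+ S(f)$ rigorous: one has to argue that an error bound controlling $\dist(\cdot, S(f))$ by $[f]_+$ forces the normalized limit direction into the recession cone, which uses both the polyhedral structure of the sublevel sets (so that $f_i(x_k)$ small relative to $\| x_k \|$ pins down the defining inequalities up to $o(\| x_k \|)$) and the identification of $0^+ S(f)$ with $\bigcup_{i \in I_0} 0^+ S(f_i)$. The sufficiency direction is essentially a two-piece (bounded part plus linear-growth tail) splicing argument and should be routine.
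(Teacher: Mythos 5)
Your sufficiency argument is fine and is essentially the paper's: linear growth on the tail plus Theorem~\ref{thrm:ErrorBound_Boundedset} on the bounded part, spliced with $\tau = \min\{\tau_1, c/2\}$.

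The necessity direction has a genuine gap, and it sits exactly at the step where the error bound must actually be used. After normalizing, you pass to a subsequence along which the index attaining $[f]_+(x_k) = \min_{i \in I}[f_i]_+(x_k)$ is a constant $i^*$, and you assert $i^* \in I_0$. Nothing forces this: the minimum in Lemma~\ref{lem:MinMax}\ref{st:MinMax_4} runs over all of $I$, and an index with $f_{i^*}^* > 0$ (e.g.\ $f_{i^*} \equiv 1$) can perfectly well satisfy $[f_{i^*}]_+(x_k)/\|x_k\| \to 0$. For such an $i^*$ your limit relation $\langle v_{i^* j}, z\rangle \le 0$ only places $z$ in $0^+ S(f_{i^*} - f_{i^*}^*)$, i.e.\ in $S([f]_+^{\infty})$, which need not lie in $0^+ S(f)$, so no contradiction with \eqref{eq:NonParallelRays} is obtained. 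Note also that the inequality you invoke, $\tau \dist(x_k, S(f_i)) \le [f_i]_+(x_k)$, is not a hypothesis (it holds for $i \in I_0$ only via Hoffman's theorem, with $i$-dependent constants, and is vacuous for $i \notin I_0$ since $S(f_i) = \emptyset$), and in any case it cannot promote the minimizing index into $I_0$. The smallness of $f(x_k)/\|x_k\|$ alone can never suffice here; you must exploit the distance smallness $\dist(x_k, S(f))/\|x_k\| \to 0$ that you derived from the error bound and then never used. A repair along your lines: by Lemma~\ref{lem:MinMax}\ref{st:MinMax_3} choose (after a subsequence) a fixed $i^* \in I_0$ with $\dist(x_k, S(f_{i^*})) = o(\|x_k\|)$, take $y_k \in S(f_{i^*})$ with $\|x_k - y_k\| = o(\|x_k\|)$, so $\|y_k\| \to \infty$ and $y_k/\|y_k\| \to z$; since $S(f_{i^*})$ is closed, convex and polyhedral, this gives $z \in 0^+ S(f_{i^*}) \subseteq 0^+ S(f)$, contradicting \eqref{eq:NonParallelRays}. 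For comparison, the paper avoids sequences altogether: under \eqref{eq:NonParallelRays} it shows directly, for each $i \in I_0$, via the Motzkin decomposition $S(f_i) = P + 0^+ S(f_i)$ and positive homogeneity of $\dist(\cdot, 0^+ S(f_i))$, that $\dist(x, S(f_i)) \ge \beta\|x\| - C$ on $\cl\cone V$, and then applies the error bound once to transfer this linear growth of the distance to $f$.
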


\begin{proof}
\textbf{Sufficiency.} Suppose that condition \eqref{eq:GrowthAtInfinity} holds true. Then there exist $r > 0$ and
$a > 0$ such that
\[
  f(x) \ge a \| x \| \quad \forall x \in  V \setminus U(0, r).
\]
Fix any $x_0 \in S(f)$. With the use of the inequality above one obtains that for any $x \in V \setminus U(0, R)$ with
$R = \max\{ r, \| x_0 \| \}$ the following inequalities hold true:
\[
  \dist(x, S(f)) \le \| x - x_0 \| \le \| x_0 \| + \| x \| \le 2 \| x \| \le \frac{2}{a} f(x).
\]
On the other hand, by Theorem~\ref{thrm:ErrorBound_Boundedset} the function $f$ has an error bound on $U(0, R)$.
Hence $f$ has an error bound on $V$ with constant $\tau = \max\{ a/2, \tau(U(0, R)) \}$.

\textbf{Necessity.} Suppose that $f$ has an error bound on $V$ with constant $\tau > 0$ and condition
\eqref{eq:NonParallelRays} holds true. Our aim is to show that under this assumption
\[
  \liminf_{\| x \| \to +\infty, x \in V} \frac{\dist(x, S(f))}{\| x \|} > 0.
\]
Then taking into account the fact that $[f(x)]_+ \ge \tau \dist(x, S(f))$ for any $x \in V$ by the definition of error
bound one obtains the required result.

By Lemma~\ref{lem:MinMax} one has $\dist(\cdot, S(f)) = \min_{i \in I_0} \dist(\cdot, S(f_i))$. Therefore, it is
sufficient to prove that
\begin{equation} \label{eq:DistFunctGrowthAtInf_Conv}
  \liminf_{\| x \| \to +\infty, x \in V} \frac{\dist(x, S(f_i))}{\| x \|} > 0 \quad \forall i \in I_0.
\end{equation}
Let us prove this inequality.

Fix any $i \in I_0$.  By the Motzkin theorem there exists a polytope $P$ such that $S(f_i) = P + 0^+ S(f_i)$, since
$S(f_i)$ is a polyhedral set. Therefore by the reverse triangle inequality for any $x \in \mathbb{R}^d$ one has
\begin{equation} \label{eq:DistPolyhed_via_RecCone}
\begin{split}
  \dist(x, S(f_i)) &= \inf_{y \in P, z \in 0^+ S(f_i)} \| x - y - z \| 
  \\
  &\ge \inf_{z \in 0^+ S(f_i)} \| x - z \| - C
  = \dist(x, 0^+ S(f_i)) - C
\end{split}
\end{equation}
where $C = \sup_{y \in P} \| y \|$. By Lemma~\ref{lem:MinMax} one has $0^+ S(f_i) \subseteq 0^+ S(f)$, which thanks to
our assumption implies that $0^+ S(f_i) \cap \cl \cone V = \{ 0 \}$. Consequently, one has
\[
  \beta := \min\Bigl\{ \dist(x, 0^+ S(f_i)) \Bigm\vert x \in \cl \cone V, \| x \| = 1  \Bigr\} > 0.
\]
Since the recession cone $0^+ S(f_i)$ is a cone, the function $x \mapsto \dist(x, 0^+ S(f_i))$ is positively
homogeneous. Hence
\[
  \dist(x, 0^+ S(f_i)) \ge \beta \| x \| \quad \forall x \in \cl \cone V.
\]
Combining this inequality with inequality \eqref{eq:DistPolyhed_via_RecCone} one finally obtains that
\[
  \dist(x, S(f_i)) \ge \beta \| x \| - C \quad \forall x \in \cl \cone V,
\] 
which obviously implies that condition \eqref{eq:DistFunctGrowthAtInf_Conv} holds true.
\end{proof}

\begin{remark}
It is worth mentioning that in the case when the set $V$ is a closed cone, condition \eqref{eq:NonParallelRays} simply
means that there are no rays in $V$ that are parallel to a ray contained in $S(f)$. As
Example~\ref{ex:GlobalErrorBound_FlatPieces} shows, in the case when the set $V$ is, in some sense, parallel to the set
$S(f)$, the function $f$ might have an error bound on $V$, but not satisfy condition \eqref{eq:NonParallelRays}.
\end{remark}

In the case when $V$ is a cone, one can provide somewhat less restrictive conditions for $f$ to have an error bound on
$V$ than in the theorem above. Recall that $f$ is said to be coercive on an unbounded set $V$, if $f(x_n) \to + \infty$
for any sequence $\{ x_n \} \subset V$ such that $\| x_n \| \to + \infty$ as $n \to \infty$. In the case when 
$V = \mathbb{R}^d$ we simply say that $f$ is coercive.

\begin{theorem} \label{thrm:ErrorBound_on_Cone}
Let $S(f)$ be nonempty and $V \subseteq \mathbb{R}^d$ be a cone. Then for $f$ to have an error bound on $V$ it is
sufficient that $f$ is coercive on $V$. Furthermore, this condition becomes necessary, when 
$0^+ S(f) \cap \cl V = \{ 0 \}$.
\end{theorem}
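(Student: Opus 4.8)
The plan is to deduce both implications from Theorem~\ref{thrm:ErrorBound_GrowthAtInfinity}. First observe that, since $V$ is a cone, $\cl\cone V=\cl V$, so condition~\eqref{eq:NonParallelRays} coincides with the assumption $0^+S(f)\cap\cl V=\{0\}$ made in the second part of the present statement. Hence \emph{necessity} is immediate: if $f$ has an error bound on $V$ and $0^+S(f)\cap\cl V=\{0\}$, then by Theorem~\ref{thrm:ErrorBound_GrowthAtInfinity} condition~\eqref{eq:GrowthAtInfinity} holds, i.e. $f(x)\ge a\|x\|$ for some $a>0$ and all $x\in V$ with $\|x\|$ large enough; in particular $f(x_n)\to+\infty$ for every sequence $\{x_n\}\subset V$ with $\|x_n\|\to+\infty$, that is, $f$ is coercive on $V$. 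For \emph{sufficiency} it is, by the same theorem, enough to show that coercivity of $f$ on $V$ implies~\eqref{eq:GrowthAtInfinity} for $V$.

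The heart of the proof is the following claim: if $f$ is coercive on the cone $V$, then $f^\infty(u)>0$ for all $u\in\cl V\setminus\{0\}$. By~\eqref{eq:RecFuncViaMinMaxRepresentation}, $f^\infty$ is continuous and positively homogeneous, so it suffices to exclude the existence of $u_0\in\Sigma:=\cl V\cap\{u:\|u\|=1\}$ with $f^\infty(u_0)\le0$; I would do this by constructing a sequence in $V$ that escapes to infinity while $f$ remains bounded. The one-dimensional piecewise affine function $t\mapsto f(tu_0)$ is eventually affine with slope $f^\infty(u_0)\le0$, hence bounded above by some finite $M_0\ge f(0)$; set $M:=M_0+1$. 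Since $u_0\in\cl V$ and $V$ is a cone, choose $u_n\in V$ with $\|u_n\|=1$ and $u_n\to u_0$. If $f^\infty(u_n)\le0$ for some $n$, then $f$ is bounded above on the whole ray $\{tu_n:t\ge0\}\subset V$, contradicting coercivity. Otherwise $f^\infty(u_n)>0$ for every $n$, so $f(tu_n)\to+\infty$ as $t\to+\infty$; let $T_n:=\inf\{t\ge0:f(tu_n)\ge M\}$, which is finite (as $f(tu_n)\to+\infty$) and positive (as $f(0)\le M_0<M$), with $f(T_nu_n)=M$. Because $f$ is globally Lipschitz continuous with some constant $L$ (cf. the proof of Proposition~\ref{prp:UniformLocalErrorBound}), $|f(tu_n)-f(tu_0)|\le Lt\|u_n-u_0\|$, so $f(\cdot\,u_n)\to f(\cdot\,u_0)$ uniformly on each interval $[0,T]$; since $f(tu_0)\le M_0<M$ there, this forces $T_n>T$ for all large $n$, i.e. $T_n\to+\infty$. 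Then $\{T_nu_n\}\subset V$, $\|T_nu_n\|=T_n\to+\infty$, and $f(T_nu_n)\equiv M$, contradicting coercivity of $f$ on $V$; the claim follows.

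With the claim in hand, $m:=\min_{u\in\Sigma}f^\infty(u)>0$ by compactness of $\Sigma$ and continuity of $f^\infty$. Using the min-max representation~\eqref{eq:MinMaxRepresentation}, for every $u\in\Sigma$, every $\lambda\ge0$ and every $i\in I$ one has $\max_{j\in J(i)}(a_{ij}+\lambda\langle v_{ij},u\rangle)\ge\lambda\max_{j\in J(i)}\langle v_{ij},u\rangle-A$, where $A:=\max_{i\in I,\,j\in J(i)}|a_{ij}|$; taking the minimum over $i\in I$ and invoking~\eqref{eq:RecFuncViaMinMaxRepresentation} yields $f(\lambda u)\ge\lambda f^\infty(u)-A\ge m\lambda-A$, hence $f(x)\ge m\|x\|-A$ for all $x\in\cl V$. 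This gives~\eqref{eq:GrowthAtInfinity} for $V$, and Theorem~\ref{thrm:ErrorBound_GrowthAtInfinity} then provides the error bound on $V$ (recall that $S(f)\ne\emptyset$ by hypothesis).

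The step I expect to be the main obstacle is the claim of the second paragraph, specifically the assertion $T_n\to+\infty$. Trying to contradict coercivity along a single ray does not work when the ``bad'' direction $u_0$ lies on the relative boundary of $V$, i.e. $u_0\in\cl V\setminus V$: then no ray actually contained in $V$ need be flat. Following the interior directions $u_n\in V$ only up to the first time $f$ reaches a fixed level $M$, together with the uniform convergence $f(\cdot\,u_n)\to f(\cdot\,u_0)$ coming from Lipschitz continuity, is what circumvents this. The remaining ingredients — the reduction of~\eqref{eq:NonParallelRays} to the stated hypothesis, the elementary min-max estimate, and the two appeals to Theorem~\ref{thrm:ErrorBound_GrowthAtInfinity} — are routine given the earlier results.
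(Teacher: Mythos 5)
Your proof is correct, and its sufficiency half follows a genuinely different route from the paper's. The paper argues componentwise: writing $f=\min_{i\in I}f_i$ as in \eqref{eq:MinMaxRepresentation}, it treats $i\in I_0$ via Hoffman's theorem (as in the proof of Theorem~\ref{thrm:Robinson}), and for each $i$ with $f_i^*>0$ it observes that coercivity of $f$ on $V$ passes to $f_i$ and hence to the positively homogeneous part $g_i(x)=\max_{j\in J(i)}\langle v_{ij},x\rangle$; exact positive homogeneity of $g_i$ plus the cone property of $V$ gives $g_i(x)\ge\beta\|x\|$ on $V$ by a normalization argument, whence $f_i(x)\ge\beta\|x\|-a_0$ and, after splitting $V$ into a ball and its complement, the per-index estimate \eqref{eq:PositiveComponentsLowerEstimate}; Lemma~\ref{lem:MinMax} then assembles the error bound. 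You instead reduce everything to the sufficiency part of Theorem~\ref{thrm:ErrorBound_GrowthAtInfinity} by proving a fact not stated in the paper for general cones: coercivity of $f$ on the cone $V$ forces $f^{\infty}>0$ on $\cl V\setminus\{0\}$, hence $f(x)\ge m\|x\|-A$ on $\cl V$. The delicate point, boundary directions $u_0\in\cl V\setminus V$, is handled soundly by your approximating-rays argument: the set $\{t\ge 0 : f(tu_n)\ge M\}$ is closed, so $f(T_nu_n)=M$, and uniform convergence of $f(\cdot\,u_n)$ to $f(\cdot\,u_0)$ on $[0,T]$ (global Lipschitz continuity) forces $T_n\to+\infty$, contradicting coercivity. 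What each route buys: yours yields the clean equivalence between coercivity on a cone and condition \eqref{eq:GrowthAtInfinity} restricted to $V$, which in particular gives the paper's corollary on \eqref{eq:StrongCoercivity} directly, without the paper's detour through global error bounds and recession cones; the paper's componentwise inequality \eqref{eq:PositiveComponentsLowerEstimate}, on the other hand, is reused verbatim in the subsequent theorem on closed convex cones, so its proof does double duty. Your necessity argument (via $\cl\cone V=\cl V$ and Theorem~\ref{thrm:ErrorBound_GrowthAtInfinity}) is exactly the paper's. A cosmetic remark only: when $V$ is bounded, so that coercivity on $V$ is vacuous and your sphere section $\Sigma$ could be empty, the conclusion is already covered by Theorem~\ref{thrm:ErrorBound_Boundedset}.
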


\begin{proof}
\textbf{Necessity.} Let $f$ have an error bound on $V$ and $0^+ S(f) \cap \cl V = \{ 0 \}$. Then by
Theorem~\ref{thrm:ErrorBound_GrowthAtInfinity} inequality \eqref{eq:GrowthAtInfinity} holds true, which obviously
implies that $f$ is coercive on $V$.

\textbf{Sufficiency.} Suppose now that $f$ is coercive on $V$. As was shown in the proof of Theorem~\ref{thrm:Robinson},
each function $f_i$, $i \in I_0$, has a global error bound with some constant $\tau_i$. Let us check that for each 
$i \in I \setminus I_0$ there exists $\tau_i > 0$ such that
\begin{equation} \label{eq:PositiveComponentsLowerEstimate}
  \tau_i \dist(x, S(f)) \le [f_i(x)]_+ \quad \forall x \in V.
\end{equation}
Then with the use of Lemma~\ref{lem:MinMax} one obtains
\begin{align*}
  [f(x)]_+ = \min_{i \in I} [f_i(x)]_+ 
  &\ge \min\Big\{ \min_{i \in I_0} \tau_i \dist(x, S(f_i)), \min_{i \in I \setminus I_0} \tau_i \dist(x, S(f)) \Big\}
  \\
  &\ge \tau \dist(x, S(f))
\end{align*}
for $\tau = \min_{i \in I} \tau_i$ and all $x \in V$, that is, $f$ has an error bound on $V$.

Thus, it remains to prove inequality \eqref{eq:PositiveComponentsLowerEstimate}. Fix any $i \in I \setminus I_0$. Recall
that by the definition of $I_0$ one has $f_i^* > 0$, which implies that $f_i(x) > 0$, i.e. $f_i(x) = [f_i]_+(x)$, for
all $x \in \mathbb{R}^d$.

From equality $f = \min_{i \in I} f_i$ and our assumption on coercivity of $f$ on $V$ it follows that $f_i$ is coercive
on $V$ as well. Consequently, the function $g_i(x) = \max_{j \in J(i)} \langle v_{ij}, x \rangle$ is also coercive on
$V$, since 
\[
  g_i(x) \ge f_i(x) - a_0 \quad \forall x \in \mathbb{R}^d,
  \quad a_0 = \max_{j \in J(i)} \vert a_{ij} \vert
\]
(see \eqref{eq:fi_def}). Hence, in particular, $g_i(x) > 0$ for any $x \in V \setminus \{ 0 \}$. Indeed,
if $g_i(x) \le 0$ for some $x \in V \setminus \{ 0 \}$, then taking into account the facts that the function $g_i$ is
positively homogeneous and $V$ is a cone one gets that $g_i(t x) = t g_i(x) \le 0$ and $tx \in V$ for any $t \ge 0$,
which contradicts the fact that $g_i$ is coercive on $V$.

Let us check that
\[
  \beta := \inf\big\{ g_i(x) \big\vert x \in V, \: \| x \| = 1 \big\} > 0.
\]
Suppose by contradiction that there exists a sequence $\{ x_n \} \subset V$ with $\| x_n \| = 1$ such that the sequence
$\beta_n := g_i(x_n)$ converges to zero. Note that $\beta_n > 0$, since $x_n \in V$. Define $y_n = (1/\beta_n) x_n$.
Clearly, $\{ y_n \} \subset V$, since $V$ is a cone, and $\| y_n \| \to + \infty$ as $n \to \infty$. Furthermore, taking
into account the fact that $g_i$ is a positively homogeneous function one gets that
\[
  g_i(y_n) = \frac{1}{\beta_n} g_i(x_n) = 1 \quad \forall n \in \mathbb{N},
\]
which contradicts the fact that $g_i$ is coercive on $V$. Therefore, $\beta > 0$ and $g_i(x) \ge \beta \| x \|$ for all
$x \in V$, since $V$ is a cone and the function $g_i$ is positively homogeneous.

Thus, the following lower estimate of the function $f_i = [f_i]_+$ holds true:
\[
  f_i(x) \ge g_i(x) - a_0 \ge \beta \| x \| - a_0 \quad \forall x \in V.
\] 
Fix any $x_0 \in S(f)$. Then for any $x \in V \setminus U(0, R)$ with $R = \max\{ \| x_0 \|, 2 a_0 / \beta \}$ one has
\begin{align*}
  f_i(x) \ge \beta \| x \| - a_0 &= \frac{\beta}{2} \| x \| + \frac{\beta}{2} \| x \| - a_0
  \ge \frac{\beta}{2} \| x \| 
  \\
  &\ge \frac{\beta}{4} \| x \| + \frac{\beta}{4} \| x_0 \|
  \ge \frac{\beta}{4} \| x  - x_0 \| \ge \frac{\beta}{4} \dist(x, S(f)).
\end{align*}
In turn, for any $x \in U(0, R)$ and for $\gamma = f_i^* / (R + \| x_0 \|)$ (recall that $i \in I \setminus I_0$, i.e.
$f_i^* > 0$) one has
\[
  \gamma \dist(x, S(f)) \le \gamma \| x - x_0 \| \le \gamma (R + \| x_0 \|) \le f_i^* \le f_i(x).
\]
Thus, inequality \eqref{eq:PositiveComponentsLowerEstimate} is satisfied with $\tau_i = \min\{ \gamma, \beta / 4 \}$.
\end{proof}

\begin{corollary}
Let $S(f)$ be nonempty and $f$ be coercive. Then $f$ has a global error bound.
\end{corollary}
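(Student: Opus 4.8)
The plan is to obtain this as an immediate specialisation of Theorem~\ref{thrm:ErrorBound_on_Cone}. The whole space $\mathbb{R}^d$ is a cone, so the sufficiency part of that theorem applies verbatim with $V = \mathbb{R}^d$.

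Concretely, I would first note that $\mathbb{R}^d$ is trivially a cone (it is closed under multiplication by nonnegative scalars). Next, recall that, by the convention fixed just before Theorem~\ref{thrm:ErrorBound_on_Cone}, ``$f$ is coercive'' means precisely ``$f$ is coercive on $V = \mathbb{R}^d$'', i.e. $f(x_n) \to +\infty$ whenever $\| x_n \| \to +\infty$. Then, since $S(f)$ is nonempty by hypothesis, the sufficiency statement of Theorem~\ref{thrm:ErrorBound_on_Cone} with $V = \mathbb{R}^d$ yields that $f$ has an error bound on $\mathbb{R}^d$, which by definition is exactly a global error bound.

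There is no real obstacle here: the corollary is a one-line consequence of the preceding theorem, and the only thing to check is the (immediate) observation that $\mathbb{R}^d$ qualifies as a cone so that the coercivity hypothesis of Theorem~\ref{thrm:ErrorBound_on_Cone} is met.
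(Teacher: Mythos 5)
Your argument is correct and is exactly how the paper intends this corollary to follow: apply the sufficiency part of Theorem~\ref{thrm:ErrorBound_on_Cone} with $V = \mathbb{R}^d$, which is a cone, so coercivity of $f$ together with $S(f) \ne \emptyset$ gives an error bound on all of $\mathbb{R}^d$, i.e.\ a global error bound.
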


\begin{corollary}
The function $f$ is coercive if and only if 
\begin{equation} \label{eq:StrongCoercivity}
  \liminf_{\| x \| \to + \infty} \frac{f(x)}{\| x \|} > 0
\end{equation}
\end{corollary}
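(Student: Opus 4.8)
The plan is to prove the two implications separately; the forward direction is immediate, and all the content lies in the converse, which I would deduce from the min-max representation \eqref{eq:MinMaxRepresentation} of $f$ together with the explicit formula \eqref{eq:RecFuncViaMinMaxRepresentation} for $f^{\infty}$.

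\emph{Sufficiency of \eqref{eq:StrongCoercivity}.} If $\liminf_{\| x \| \to +\infty} f(x)/\| x \| =: 2a > 0$, then $f(x) \ge a \| x \|$ for every $x$ outside some ball $U(0, r)$; hence $f(x_n) \to +\infty$ whenever $\| x_n \| \to +\infty$, i.e. $f$ is coercive. There is nothing delicate here.

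\emph{Necessity.} Suppose $f$ is coercive, set $a_0 = \max_{i \in I,\, j \in J(i)} |a_{ij}|$, and write $g_i(x) = \max_{j \in J(i)} \langle v_{ij}, x \rangle$, so that $f = \min_{i \in I} f_i$ and, by \eqref{eq:RecFuncViaMinMaxRepresentation}, $f^{\infty} = \min_{i \in I} g_i$. First I would establish the two-sided bound $|f(x) - f^{\infty}(x)| \le a_0$ for all $x \in \mathbb{R}^d$: for each $i$ one checks $g_i(x) - a_0 \le f_i(x) \le g_i(x) + a_0$ by evaluating the two maxima defining $f_i(x)$ and $g_i(x)$ at each other's active index, and then the elementary inequality $|\min_i \alpha_i - \min_i \gamma_i| \le \max_i |\alpha_i - \gamma_i|$ passes this estimate to the pointwise minima. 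Next I would show $f^{\infty}(x) > 0$ for all $x \ne 0$: if $f^{\infty}(\overline{x}) \le 0$ for some $\overline{x} \ne 0$, then by positive homogeneity of $f^{\infty}$ we would get $f(\lambda \overline{x}) \le f^{\infty}(\lambda \overline{x}) + a_0 = \lambda f^{\infty}(\overline{x}) + a_0 \le a_0$ for all $\lambda \ge 0$, contradicting coercivity of $f$ since $\| \lambda \overline{x} \| \to +\infty$. Since $f^{\infty}$ is continuous and positively homogeneous, $\beta := \min\{ f^{\infty}(x) \mid \| x \| = 1 \}$ is attained and positive, whence $f^{\infty}(x) \ge \beta \| x \|$ on $\mathbb{R}^d$; combining this with the lower bound $f(x) \ge f^{\infty}(x) - a_0$ gives $f(x) \ge \beta \| x \| - a_0$ and therefore $\liminf_{\| x \| \to +\infty} f(x)/\| x \| \ge \beta > 0$.

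I expect no real obstacle: the argument mirrors the sufficiency part of Theorem~\ref{thrm:ErrorBound_on_Cone} specialised to $V = \mathbb{R}^d$ (where coercivity of a positively homogeneous piecewise affine function forces it to be bounded below on the unit sphere by a positive constant). The only mildly technical point is the two-sided comparison $|f - f^{\infty}| \le a_0$, which is exactly where positive homogeneity of the recession function and boundedness of the affine offsets $a_{ij}$ are used together; everything else reduces to compactness of the unit sphere and continuity of $f^{\infty}$.
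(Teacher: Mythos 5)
Your proof is correct, but it follows a genuinely different route from the paper. The paper derives the corollary from the error-bound machinery it has just built: it first replaces $f$ by $f - C$ to ensure $S(f) \ne \emptyset$, invokes the preceding corollary (coercivity implies a global error bound, via Theorem~\ref{thrm:ErrorBound_on_Cone}), notes that coercivity makes $S(f)$ bounded so that $0^+ S(f) = \{ 0 \}$, and then applies the necessity part of Theorem~\ref{thrm:ErrorBound_GrowthAtInfinity} with $V = \mathbb{R}^d$ to extract condition \eqref{eq:StrongCoercivity}. You instead argue directly through the recession function: the two-sided bound $\vert f - f^{\infty} \vert \le a_0$ coming from \eqref{eq:MinMaxRepresentation} and \eqref{eq:RecFuncViaMinMaxRepresentation}, the observation that coercivity forces $f^{\infty}(x) > 0$ for $x \ne 0$ (otherwise $f$ is bounded along the ray through $\overline{x}$), and compactness of the unit sphere together with positive homogeneity to get $f^{\infty}(x) \ge \beta \| x \|$ and hence $f(x) \ge \beta \| x \| - a_0$. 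Your argument is more elementary and self-contained: it needs neither the normalization $S(f) \ne \emptyset$, nor Hoffman's theorem, nor any of the error-bound theorems, and it is indeed the same mechanism (homogeneity plus a positive minimum on the sphere) that appears inside the sufficiency proof of Theorem~\ref{thrm:ErrorBound_on_Cone}, here applied once to $f^{\infty}$ rather than to each $g_i$ with $i \notin I_0$. What the paper's route buys is brevity given the machinery already in place and a demonstration of how the error-bound results interlock; what yours buys is independence from that machinery and a cleaner quantitative lower bound $f(x) \ge \beta \| x \| - a_0$ as a by-product.
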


\begin{proof}
Replacing, if necessary, $f$ with $f - C$ for a sufficiently large $C$ one can suppose that the set $S(f)$ is nonempty.
If $f$ is coercive, then by the previous corollary $f$ has a global error bound. Moreover, the coercivity of $f$ also
implies that the set $S(f)$ is bounded. Therefore, $0^+ S(f) = \{ 0 \}$ and applying 
Theorem~\ref{thrm:ErrorBound_GrowthAtInfinity} with $V = \mathbb{R}^d$ one can conclude that condition
\eqref{eq:StrongCoercivity} holds true, since by Theorem~\ref{thrm:ErrorBound_GrowthAtInfinity}
this condition is necessary for $f$ to have a global error bound, if $0^+ S(f) \cap \cl \cone V = \{ 0 \}$. In turn, if
condition \eqref{eq:StrongCoercivity} is satisfied, then $f$ is obviously coercive.
\end{proof}

The following example demonstrates that when $V$ is not a cone, the coercivity of $f$ on $V$ does not guarantee that $f$
has an error bound on $V$. Furthermore, it also shows that $f$ can have an error bound on an unbounded set $V$, but not
have and error bound on the conic hull of $V$, which means that, roughly speaking, necessary conditions for $f$ to have
an error bound on $V$ cannot be expressed in terms of the conic hull of $V$.

\begin{example}
Let $d = 2$, $f(x^{(1)}, x^{(2)}) = \min\{ \vert x^{(1)} \vert + \vert x^{(2)} \vert, 1 + \vert x^{(1)} \vert\}$, and
$\| \cdot \|$ be the Euclidean norm. Then $S(f) = \{ 0 \}$. Define $V = \{ x_n \}_{n \in \mathbb{N}}$, where 
$x_n = (n, n^2)$. Note that $f(x_n) = 1 + n$ for any $n \in \mathbb{N}$, that is, the function $f$ is coercive on $V$.
However, for any $\tau > 0$ one has
\[
  f(x_n) = 1 + n < \tau n^2 < \tau \| x_n \| = \tau \dist(x_n, S(f)) 
  \quad \forall n > \frac{1 + \sqrt{1 + 4 \tau}}{2\tau},
\] 
which implies that $f$ does not have an error bound on $V$.

Suppose now that $V = \{ (0, 1) \} \cup \{ (n, 0) \}_{n \in \mathbb{N}}$. As one can readily verify,
\[
  f(0, 1) = 1 = \dist\Big( (0, 1), S(f) \Big), \quad 
  f(n, 0) = n = \dist\Big( (n, 0), S(f) \Big)) \quad \forall n \in \mathbb{N},  
\]
that is, $f$ has an error bound on $V$. On the other hand, the ray $\{ (0, s) \mid s \ge 0 \}$ is obviously contained in
the conic hull of $V$ and for any $\tau > 0$ one has
\[
  f(0, s) = 1 < \tau s = \tau \dist\Big( (0, s), S(f) \Big) \quad \forall s > \max\left\{ \frac{1}{\tau}, 1 \right\}. 
\]
Thus, $f$ does not have an error bound on the conic hull of $V$.
\end{example}

In the case when $V$ is a convex cone and a representation of $f$ of the form \eqref{eq:MinMaxRepresentation} is known, 
one can provide geometric necessary and sufficient conditions for $f$ to have an error bound on $V$.

For any convex cone $K \subseteq \mathbb{R}^d$ denote by 
$K^* = \{ y \in \mathbb{R}^d \mid \langle y, x \rangle \le 0 \: \forall x \in K \}$ 
\textit{the polar cone} of $K$. Let also $\co Q$ be the convex hull of a set $Q \subset \mathbb{R}^d$.

\begin{theorem} 
Let $S(f)$ be nonempty and $V$ be a closed convex cone. Then for $f$ to have an error bound on $V$ it is sufficient that
for any $i \in I$ such that $f_i^* > 0$ one of the two following equivalent conditions holds true:
\begin{enumerate}
\item{$f_i$ is coercive on $V$; \label{st:Coercivity}}

\item{$0 \in \interior( \co\{ v_{ij} \mid j \in J(i) \} + V^* )$. \label{st:Geometric}}
\end{enumerate}
Moreover, these conditions become necessary, when $0^+ S(f) \cap V = \{ 0 \}$.
\end{theorem}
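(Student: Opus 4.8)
The plan is to first prove the equivalence of conditions~\ref{st:Coercivity} and~\ref{st:Geometric}, then obtain sufficiency by recycling the proof of Theorem~\ref{thrm:ErrorBound_on_Cone}, and finally prove necessity by a contradiction argument patterned on the necessity parts of Theorems~\ref{thrm:ErrorBound_GrowthAtInfinity} and~\ref{thrm:ErrorBound_on_Cone}.

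To establish the equivalence, fix $i$ with $f_i^* > 0$ and set $g_i(x) = \max_{j \in J(i)} \langle v_{ij}, x \rangle$. Since $g_i$ differs from $f_i$ by at most $\max_{j \in J(i)} |a_{ij}|$, the function $f_i$ is coercive on $V$ if and only if $g_i$ is; and because $V$ is a closed cone (so its unit sphere is compact), arguing exactly as in the sufficiency part of the proof of Theorem~\ref{thrm:ErrorBound_on_Cone} one sees that $g_i$ is coercive on $V$ if and only if $g_i(x) > 0$ for all $x \in V \setminus \{ 0 \}$. The key observation is that $g_i$ is the support function of the polytope $C_i := \co\{ v_{ij} \mid j \in J(i) \}$, and the support function of $C_i + V^*$ equals $g_i$ on $V$ and $+\infty$ off $V$ (here one uses that $V$ is a closed convex cone, so that $(V^*)^* = V$ and the support function of $V^*$ vanishes on $V$ and is $+\infty$ elsewhere). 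Hence $g_i > 0$ on $V \setminus \{0\}$ is equivalent to the support function of $C_i + V^*$ being strictly positive at every nonzero point. Finally, $C_i + V^*$ is a sum of a compact convex set and a closed convex set, hence closed and convex, and for any closed convex set $D$ a routine separation argument gives $0 \in \interior D$ if and only if its support function is positive on $\mathbb{R}^d \setminus \{0\}$; applying this with $D = C_i + V^*$ completes the equivalence.

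For sufficiency, assume condition~\ref{st:Coercivity} holds for all $i$ with $f_i^* > 0$, i.e. $f_i$ is coercive on $V$ for every $i \in I \setminus I_0$. Then one repeats the sufficiency part of the proof of Theorem~\ref{thrm:ErrorBound_on_Cone} almost verbatim: the coercivity of $f$ on $V$ assumed there is used only to deduce the coercivity of $f_i$ on $V$ for $i \in I \setminus I_0$, which is precisely what is needed to derive \eqref{eq:PositiveComponentsLowerEstimate}, whereas for $i \in I_0$ a global error bound for $f_i$ comes from Hoffman's theorem as in the proof of Theorem~\ref{thrm:Robinson}. Lemma~\ref{lem:MinMax} then assembles these estimates into an error bound for $f$ on $V$.

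For necessity, suppose $f$ has an error bound on $V$ with constant $\tau > 0$, that $0^+ S(f) \cap V = \{0\}$, and, for contradiction, that condition~\ref{st:Coercivity} fails for some $i_0 \in I \setminus I_0$. Non-coercivity of $g_{i_0}$ on $V$ together with compactness of the unit sphere of $V$ produces $\overline{y} \in V$, $\|\overline{y}\| = 1$, with $\langle v_{i_0 j}, \overline{y} \rangle \le 0$ for all $j \in J(i_0)$. Along the ray $\{ t\overline{y} \mid t \ge 0\} \subseteq V$ one then has $f(t\overline{y}) \le f_{i_0}(t\overline{y}) \le \max_{j \in J(i_0)} a_{i_0 j}$, so $[f]_+$ is bounded on this ray. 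On the other hand, $\overline{y} \in V \setminus \{0\}$ and $0^+ S(f) = \bigcup_{i \in I_0} 0^+ S(f_i)$ (Lemma~\ref{lem:MinMax}) force $\overline{y} \notin 0^+ S(f_i)$, hence $\dist(\overline{y}, 0^+ S(f_i)) > 0$, for every $i \in I_0$; writing $S(f_i) = P_i + 0^+ S(f_i)$ by the Motzkin theorem and estimating as in \eqref{eq:DistPolyhed_via_RecCone} gives $\dist(t\overline{y}, S(f_i)) \ge t\, \dist(\overline{y}, 0^+ S(f_i)) - \sup_{y \in P_i} \|y\|$, so $\dist(t\overline{y}, S(f)) = \min_{i \in I_0}\dist(t\overline{y}, S(f_i)) \to +\infty$ as $t \to +\infty$, contradicting $\tau\, \dist(t\overline{y}, S(f)) \le [f]_+(t\overline{y})$ for all $t \ge 0$. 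I expect the main obstacle to be the equivalence of~\ref{st:Coercivity} and~\ref{st:Geometric}, i.e. converting the analytic coercivity of $f_i$ on $V$ into the geometric interiority statement: once the support-function description of $g_i$ and the polar-cone identity $(V^*)^* = V$ are in place this is short, but it is the step requiring the most care.
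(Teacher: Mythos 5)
Your proposal is correct and takes essentially the same approach as the paper: equivalence of the two conditions via polarity $(V^*)^*=V$ and separation, sufficiency by reusing the derivation of \eqref{eq:PositiveComponentsLowerEstimate} from the proof of Theorem~\ref{thrm:ErrorBound_on_Cone} together with Hoffman's theorem for $i \in I_0$ and Lemma~\ref{lem:MinMax}, and necessity from the error bound plus $0^+ S(f) \cap V = \{0\}$ forcing coercivity of each $f_i$ on $V$. The only cosmetic differences are that the paper establishes the equivalence by a direct separation argument rather than your support-function calculus for $\co\{v_{ij}\} + V^*$, and obtains necessity by simply citing Theorem~\ref{thrm:ErrorBound_on_Cone} instead of your inline ray construction, which just unrolls the same argument.
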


\begin{proof}
\textbf{Part 1.} Let us first verify that the two conditions from the formulation of the theorem are indeed equivalent.

\ref{st:Coercivity} ${}\implies{}$ \ref{st:Geometric}. Fix any $i \in I$ such that $f_i^* > 0$. Suppose by contradiction
that $0 \notin \interior( \co\{ v_{ij} \mid j \in J(i) \} + V^* )$. Then by the separation theorem there exists a
nonzero vector $z \in \mathbb{R}^d$ such that
\begin{equation} \label{eq:SeparationThrm}
  \langle z, v \rangle \le 0 \quad \forall v \in \co\{ v_{ij} \mid j \in J(i) \} + V^*.
\end{equation}
Note that $z \in V^{**} = V$. Indeed, if $z \notin V^{**}$, then one can find $y \in V^*$ such that 
$\langle z, y \rangle > 0$. On the other hand, taking into account \eqref{eq:SeparationThrm} and the fact that $V^*$ is
a cone one obtains that
\[
  \langle z, v_{ij} + t y \rangle \le 0 \quad \forall t > 0, \quad \forall j \in J(i),
\]
which obviously contradicts the inequality $\langle z, y \rangle > 0$.
 
From \eqref{eq:SeparationThrm} it follows that $\langle z, v_{ij} \rangle \le 0$ for all $j \in J(i)$. Thus, we have
found $z \in V$, $z \ne 0$, such that
\[
  f_i(t z) = \max_{j \in J(i)}(a_{ij} + t \langle v_{ij}, z \rangle) \le \max_{j \in J(i)} a_{ij} \quad \forall t \ge 0,
\] 
which contradicts the fact that $f_i$ is coercive on $V$.

\ref{st:Geometric} ${}\implies{}$ \ref{st:Coercivity}. Fix any $i \in I$ such that $f_i^* > 0$. By our assumption there
exists $r > 0$ such that
\[
  U(0, r) \subset \co\{ v_{ij} \mid j \in J(i) \} + V^*,
\]
which with the use of the definition of $V^*$ yields
\[
  r \| x \| \le \max\Big\{ \langle v, x \rangle \Bigm\vert v \in \co\{ v_{ij} \mid j \in J(i) \} + V^* \Big\} 
  \le \max_{j \in J(i)} \langle v_{ij}, x \rangle \quad \forall x \in V.  
\]
Consequently, for any $x \in V$ one has
\[
  f_i(x) = \max_{j \in J(i)} (a_{ij} + \langle v_{ij}, x \rangle) 
  \ge \max_{j \in J(i)} \langle v_{ij}, x \rangle - \max_{j \in J(i)} \vert a_{ij} \vert
  \ge r \| x \| - \max_{j \in J(i)} \vert a_{ij} \vert,  
\]
which obviously implies that $f_i$ is coercive on $V$.

\textbf{Part 2.} Let us now prove the main statement of the theorem. Suppose that each function $f_i$ with $f_i^* > 0$
is coercive on $V$. Then, as was shown in the proof of Theorem~\ref{thrm:ErrorBound_on_Cone}, for any such function
$f_i$ there exists $\tau_i > 0$ such that inequality \eqref{eq:PositiveComponentsLowerEstimate} holds true.

In turn, as was shown in the proof of Theorem~\ref{thrm:Robinson}, for any $i \in I_0$ the function $f_i$ has a global
error bound with some constant $\tau_i > 0$. Hence with the use of Lemma~\ref{lem:MinMax} one can easily check that $f$
has an error bound on $V$ with constant $\tau = \min_{i \in I} \tau_i$.

Suppose now that $f$ has an error bound on $V$ and $0^+ S(f) \cap V = \{ 0 \}$. Then by 
Theorem~\ref{thrm:ErrorBound_on_Cone} the function $f$ is coercive on $V$, which due to the equality 
$f = \min_{i \in I} f_i$ implies that each $f_i$ is coercive on $V$ as well.
\end{proof}

\subsection{Error bounds on unbounded sets: general conditions}
\label{subsect:EB_GeneralCond}

Let us finally provide general necessary and sufficient conditions for $f$ to have an error bound on an unbounded set
(in particular, a global error bound) that extend the results of Gowda \cite{Gowda1996} on global error bounds for
piecewise affine functions. 

One might be tempted to say that for $f$ to have a global error bound it is necessary that all ``flat pieces'' of $f$,
on which $f$ is positive (i.e. all polyhedral sets on which $f$ is constant and positive), are bounded. However, this is
not the case.

\begin{example} \label{ex:GlobalErrorBound_FlatPieces}
Let $d = 2$, the space $\mathbb{R}^2$ be endowed with the Euclidean norm, and
\[
  f(x) = \begin{cases}
    0, & \text{ if } x^{(1)} \le 0, 
    \\
    x^{(1)}, & \text{ if } 0 \le x^{(1)} \le 1, 
    \\
    1, & \text{ if } 1 \le x^{(1)} \le 2,
    \\
    x^{(1)} - 1, & \text{ if } x^{(1)} \ge 2.
  \end{cases}
\]
Then $S(f) = \{ x \in \mathbb{R}^2 \mid x^{(1)} \le 0 \}$ and $\dist(x, S(f)) = [x^{(1)}]_+$ for all 
$x \in \mathbb{R}^2$. Hence, as is easily seen,
\[
  \frac{1}{2} \dist(x, S(f)) \le f(x) \quad \forall x \in \mathbb{R}^2,
\]
that is, $f$ has a global error bound, despite the fact that $f$ is constant and positive on the unbounded set 
$\{ x \in \mathbb{R}^2 \mid 1 \le x^{(1)} \le 2 \}$.
\end{example}

Our aim is to show that the existence of a global error bound for $f$ is completely defined by the location of
directions along which $f$ is constant and positive with respect to the set $S(f)$. Namely, $f$ has a global error bound
if and only if such directions are, in a sense, parallel to the set $S(f)$. To conveniently formulate this condition, we
will use the recession function of the function $[f]_+$, being inspired by the approach of Gowda \cite{Gowda1996}.

Let us formulate necessary and sufficient conditions for $f$ to have an error bound on a finite union of polyhedral sets
(e.g. to have a global error bound).

\begin{theorem} \label{thrm:ErrorBound_RecessionCones}
Let $S(f)$ be nonempty and $V \subseteq \mathbb{R}^d$ be a finite union of polyhedral sets. Then for $f$ to have an
error bound on $V$ it is necessary and sufficient that 
\[
  S([f]_+^{\infty}) \cap 0^+ V \subseteq 0^+ S(f).
\]
\end{theorem}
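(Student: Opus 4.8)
I would prove the two implications separately: necessity is short, while sufficiency requires an induction on the dimension $d$ in which one ``peels off'' the recession direction $z$ witnessing a hypothetical failure of the error bound. By equivalence of norms (which changes error-bound constants but affects neither $S(f)$, nor $0^+V$, nor $S([f]_+^\infty)$) I would work throughout with the Euclidean norm, so that orthogonal decompositions along $z$ are available.

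\textbf{Necessity.} Assume $\tau\dist(x,S(f))\le[f]_+(x)$ for $x\in V$, and fix $z\in S([f]_+^\infty)\cap 0^+V$ with $z\ne 0$ (the case $z=0$ being trivial since $0\in 0^+S(f)$). Since $[f]_+^\infty=[f^\infty]_+$, we have $f^\infty(z)=\min_{i\in I}\max_{j\in J(i)}\langle v_{ij},z\rangle\le 0$, so there is $i_0\in I$ with $\max_{j\in J(i_0)}\langle v_{i_0 j},z\rangle\le 0$. Picking $v_0\in V$ with $v_0+\lambda z\in V$ for all $\lambda\ge 0$, this gives $f(v_0+\lambda z)\le f_{i_0}(v_0+\lambda z)\le\max_{j\in J(i_0)}(a_{i_0 j}+\langle v_{i_0 j},v_0\rangle)$ for all $\lambda\ge 0$, hence $\sup_{\lambda\ge 0}[f(v_0+\lambda z)]_+<\infty$ and therefore, by the error bound, $\sup_{\lambda\ge 0}\dist(v_0+\lambda z,S(f))=:M<\infty$. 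By Lemma~\ref{lem:MinMax}, $S(f)=\bigcup_{i\in I_0}S(f_i)$ is a finite union of polyhedral sets, so for some fixed $l\in I_0$ there are $\lambda_n\to+\infty$ with $\dist(v_0+\lambda_n z,S(f_l))\le M$; choosing $y_n\in S(f_l)$ with $\|v_0+\lambda_n z-y_n\|\le M$ we get $\|y_n\|\to\infty$ and $y_n/\|y_n\|\to z/\|z\|$, whence Motzkin's theorem yields $z/\|z\|\in 0^+S(f_l)$, i.e.\ $z\in 0^+S(f_l)\subseteq 0^+S(f)$.

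\textbf{Sufficiency: reduction.} If $f_i^*\le 0$ for all $i\in I$, Theorem~\ref{thrm:Robinson} already gives a global error bound; otherwise set $\rho:=\min\{f_i^*\mid f_i^*>0\}>0$, so that $f$ has an error bound on $\{x\mid f(x)<\rho\}$. Since a ray contained in a finite union of closed sets eventually lies in one of them, $0^+V$ is the union of the recession cones of the polyhedral pieces of $V$; consequently the hypothesis holds on $V$ iff it holds on each polyhedral piece, and an error bound on each piece gives one on $V$. Thus we may assume $V=W$ is a single polyhedral set and argue by induction on $d$ (with $d=0$ trivial). Suppose $f$ has no error bound on $W$; then there are $x_n\in W$ with $f(x_n)>0$ and $f(x_n)/\dist(x_n,S(f))\to 0$. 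If $\dist(x_n,S(f))$ stayed bounded, then $f(x_n)\to 0$, so $x_n\in\{f<\rho\}$ eventually, contradicting the error bound there; hence $\dist(x_n,S(f))\to\infty$ and $\|x_n\|\to\infty$. Pass to a subsequence with $u_n:=x_n/\|x_n\|\to z$, $\|z\|=1$. Global Lipschitz continuity of $f$ gives $f(x_n)/\|x_n\|\to f^\infty(z)$, while the left side tends to $0$ (the ratio $f(x_n)/\dist(x_n,S(f))\to 0$ and $\dist(x_n,S(f))/\|x_n\|$ is bounded), so $f^\infty(z)=0$, i.e.\ $z\in S([f]_+^\infty)$; also $z\in 0^+W$ by Motzkin's theorem, so $z\in 0^+S(f)$ by the hypothesis. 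Using $z\in 0^+S(f)$ and $\dist(x_n,S(f))\to\infty$ one checks that $\max_{j\in J(m)}\langle v_{mj},z\rangle\ge 0$ for every $m\in I$ (otherwise, writing $x_n=s_nz+w_n$ with $s_n=\langle x_n,z\rangle\to\infty$, $w_n\in z^\perp$, $\|w_n\|=o(s_n)$, the $m$-th piece would drive $f(x_n)\to-\infty$, forcing $x_n\in S(f)$). Hence $I^=:=\{m\in I\mid\max_{j\in J(m)}\langle v_{mj},z\rangle=0\}\ne\emptyset$, and the limit $\bar f(w):=\lim_{\mu\to+\infty}f(\mu z+w)=\min_{m\in I^=}\max_{j\colon\langle v_{mj},z\rangle=0}(a_{mj}+\langle v_{mj},w\rangle)$ exists and is a piecewise affine function depending only on the projection of $w$ onto $z^\perp$, with $S(\bar f)\ne\emptyset$; thus it restricts to a piecewise affine function on $z^\perp\cong\mathbb{R}^{d-1}$. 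The key quantitative fact, proved by comparing the pieces, is that there is a constant $C>0$ with $f(\mu z+w)=\bar f(w)$ whenever $\mu\ge C(1+\|w\|)$.

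\textbf{Sufficiency: conclusion and the obstacle.} A recession-cone computation based on Lemmas~\ref{lem:MinMax} and~\ref{lem:RecFuncSublevel}, Motzkin's theorem, and the explicit forms above shows that $\bar f$, on the polyhedral set $\bar V:=\mathrm{proj}_{z^\perp}(W)$, again satisfies the hypothesis of the theorem: if $\bar w\in S([\bar f]_+^\infty)\cap 0^+\bar V$, then $\bar w+tz\in S([f]_+^\infty)\cap 0^+W$ for all large $t$, hence $\bar w+tz\in 0^+S(f)=\bigcup_{l\in I_0}0^+S(f_l)$, and a pigeonhole over $t\to\infty$ together with the description of $0^+S(f_l)$ gives $\bar w\in 0^+S(\bar f)$. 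By the inductive hypothesis $\bar f$ has an error bound $\bar\tau>0$ on $\bar V$. Writing $x_n=s_nz+w_n$ with $w_n=\mathrm{proj}_{z^\perp}(x_n)\in\bar V$, we have $s_n\to\infty$, $\|w_n\|=o(s_n)$, and $f(x_n)=\bar f(w_n)>0$ for large $n$ (by the key fact, since $s_n\ge C(1+\|w_n\|)$). Then $\dist(w_n,S(\bar f))\le\bar\tau^{-1}f(x_n)=o(s_n)$, so the nearest point $w_n^*\in S(\bar f)\cap z^\perp$ satisfies $\|w_n^*\|\le\|w_n\|+\bar\tau^{-1}f(x_n)=o(s_n)$, hence $s_n\ge C(1+\|w_n^*\|)$ for large $n$, so $f(w_n^*+s_nz)=\bar f(w_n^*)\le 0$, i.e.\ $w_n^*+s_nz\in S(f)$. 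Therefore $\dist(x_n,S(f))\le\|x_n-(w_n^*+s_nz)\|=\|w_n-w_n^*\|=\dist(w_n,S(\bar f))\le\bar\tau^{-1}f(x_n)$, contradicting $f(x_n)/\dist(x_n,S(f))\to 0$. The main obstacle is precisely this sufficiency argument: organising the dimensional reduction along $z$, namely showing that $\bar f=\lim_{\mu\to+\infty}f(\mu z+\cdot)$ is again piecewise affine, that the identity $f(\mu z+w)=\bar f(w)$ holds at a rate linear in $\|w\|$, and that the hypothesis descends from $(f,W)$ to $(\bar f,\mathrm{proj}_{z^\perp}(W))$; everything else reduces to Hoffman's and Motzkin's theorems and the bookkeeping lemmas of Sections~\ref{sect:PiecewiseAffine}--\ref{sect:ErrorBoundsPAFunc}.
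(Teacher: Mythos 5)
Your proof is correct, but it takes a genuinely different route from the paper's. The paper does not argue by contradiction or induct on dimension: it first reduces the error bound on $V$ to the finitely many inequalities $f_i(x)\ge\tau_i\dist(x,S(f))$ on $V$ for the convex pieces with $f_i^*>0$ (Lemma~\ref{lem:PositiveErrorBound}, Hoffman's theorem handling the pieces with $f_i^*\le 0$), then characterises each such inequality by the recession-cone inclusion $0^+\bigl(S(f_i-f_i^*)\cap V\bigr)\subseteq 0^+S(f)$ (Lemma~\ref{lem:ErrorBound_StratifiedReccCones}). The quantitative heart is Lemma~\ref{lem:Dist_RecessionCones}, the comparison $\dist(x,X)\le C\dist(x,Y)+\theta$ on $V$ for finite unions of polyhedral sets, whose key ingredient is the polyhedral-cone inequality $C\dist(x,Y)\ge\dist(x,Y\cap V)$ on $V$ (Lemma~\ref{lem:Dist_PolyhedralCones}), proved via polarity, $(Y\cap V)^*=Y^*+V^*$, and Carath\'eodory's theorem for cones; the theorem then follows by combining this with Lemma~\ref{lem:RecFuncSublevel}. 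You instead extract an asymptotic direction $z$ from a hypothetical failure, show that $f$ stabilises along $z$ to the piecewise affine function $\bar f$ on $z^\perp$ at a linear rate, verify that the hypothesis descends to $(\bar f,\mathrm{proj}_{z^\perp}(W))$, and induct on $d$; your necessity argument is a direct ray argument close in spirit to the paper's use of Lemma~\ref{lem:BoundedDistFunc}. What each buys: the paper's route yields reusable intermediate results (notably Lemmas~\ref{lem:Dist_PolyhedralCones} and \ref{lem:ErrorBound_StratifiedReccCones}, the latter being the criterion emphasised in the introduction) and shows how the error-bound constant is assembled from Hoffman constants and the cone-comparison constant; your route is more elementary (essentially only Theorem~\ref{thrm:Robinson}, Motzkin's theorem, and linear algebra) but is non-constructive about the constant and concentrates the real work in the three claims you only sketch --- the stabilisation estimate $f(\mu z+w)=\bar f(w)$ for $\mu\ge C(1+\|w\|)$, the fact that $0^+\mathrm{proj}_{z^\perp}(W)=\mathrm{proj}_{z^\perp}(0^+W)$ together with $z\in 0^+W$ (which is what makes $\bar w+tz\in 0^+W$ for large $t$; polyhedrality of $W$ is essential here), and the pigeonhole descent of the inclusion, where you need $\max_{j\in J(l)}\langle v_{lj},z\rangle\ge 0$ from your earlier step to conclude that the index $l$ produced by the pigeonhole lies in $I^=$ --- all of which do check out.
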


We divide the proof of this theorem into a series of lemmas. First, we reformulate the statement of the theorem in terms
of the convex piecewise affine functions $f_i$, $i \in I$.

\begin{lemma} \label{lem:PositiveErrorBound}
Let the assumptions of Theorem~\ref{thrm:ErrorBound_RecessionCones} hold true. Then $f$ has an error bound on $V$ if and
only if for any $i \in I$ with $f_i^* > 0$ there exists $\tau_i > 0$ such that
\begin{equation} \label{eq:StratifiedErrorBound}
  f_i(x) \ge \tau_i \dist(x, S(f)) \quad \forall x \in V.
\end{equation}
\end{lemma}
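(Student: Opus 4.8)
The plan is to use the decomposition $[f]_+ = \min_{i \in I}[f_i]_+$ from Lemma~\ref{lem:MinMax}\ref{st:MinMax_4} together with the fact, already established inside the proof of Theorem~\ref{thrm:Robinson}, that every convex piece $f_i$ with $i \in I_0$ (i.e. $f_i^* \le 0$) has a global error bound with some constant $\tau_i > 0$, which combined with $\dist(x,S(f)) \le \dist(x,S(f_i))$ (Lemma~\ref{lem:MinMax}\ref{st:MinMax_3}) gives $[f_i]_+(x) \ge \tau_i\dist(x,S(f))$ for all $x \in \mathbb{R}^d$, in particular on $V$. Thus the contribution of the indices $i \in I_0$ to the lower bound for $[f]_+$ is automatically controlled, and the only pieces that can obstruct an error bound on $V$ are those with $f_i^* > 0$.

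First I would prove the ``if'' direction. Suppose for every $i$ with $f_i^* > 0$ there is $\tau_i > 0$ with $f_i(x) \ge \tau_i\dist(x,S(f))$ on $V$; note $f_i(x) \ge f_i^* > 0$ everywhere, so $f_i = [f_i]_+$ and the hypothesis reads $[f_i]_+(x) \ge \tau_i\dist(x,S(f))$ on $V$. Combining this with the bound $[f_i]_+(x) \ge \tau_i\dist(x,S(f))$ for $i \in I_0$ and applying Lemma~\ref{lem:MinMax}\ref{st:MinMax_4}, one gets
\[
  [f]_+(x) = \min_{i \in I}[f_i]_+(x) \ge \Big(\min_{i \in I}\tau_i\Big)\dist(x,S(f)) \quad \forall x \in V,
\]
so $f$ has an error bound on $V$ with constant $\tau = \min_{i \in I}\tau_i > 0$. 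This direction needs no restriction on $V$ at all.

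For the ``only if'' direction, suppose $f$ has an error bound on $V$ with constant $\tau$, so $[f]_+(x) \ge \tau\dist(x,S(f))$ on $V$, and fix $i$ with $f_i^* > 0$. The natural idea is to show that on the polyhedral set $V_i := \{x \in V \mid f(x) = f_i(x)\}$ (more precisely on $\{x \in V \mid f_i(x) \le f_j(x)\ \forall j\}$, which is $V$ intersected with finitely many closed half-spaces, hence again a finite union of polyhedral sets) one has $[f]_+ = [f_i]_+ = f_i$, so the error bound on $V$ directly yields $f_i(x) \ge \tau\dist(x,S(f))$ there; off that set some other $f_j$ strictly dominates. The subtlety — and I expect this to be the main obstacle — is handling points of $V$ where $f_i$ is not the active piece: there $f_i(x)$ may exceed $[f]_+(x)$ and the error-bound inequality gives nothing about $f_i$ directly. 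The remedy is to estimate $\dist(x,S(f))$ on the remaining ``high'' region $\{x \in V \mid f_j(x) < f_i(x)\text{ for some }j\}$ by splitting into a bounded part (handled via Theorem~\ref{thrm:ErrorBound_Boundedset}, where $f_i \ge f_i^* > 0$ is bounded below by a positive constant and $\dist(\cdot,S(f))$ is bounded, giving a crude ratio bound) and an unbounded part; here the hypothesis that $V$ is a \emph{finite union of polyhedral sets} is used, since it lets one reduce to each polyhedral cell and invoke a Motzkin/Hoffman-type growth argument as in the necessity proofs of Theorems~\ref{thrm:ErrorBound_GrowthAtInfinity} and~\ref{thrm:ErrorBound_on_Cone}. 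Alternatively, and perhaps more cleanly, one applies the already-proved ``only if'' machinery of those theorems cell by cell: on each polyhedral piece of $V$ the function $f_i$ is coercive or has appropriate growth precisely because $f$ does on that piece, and one assembles the constants $\tau_i$ from the finitely many pieces by taking a minimum. Once $f_i(x) \ge \tau_i\dist(x,S(f))$ is secured on each cell, the minimum over the finitely many cells gives the desired uniform $\tau_i$ on all of $V$, completing the proof.
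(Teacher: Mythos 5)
Your ``if'' direction is correct and is essentially the paper's argument: for $i \in I_0$ the Hoffman-based global error bound for $f_i$, combined with Lemma~\ref{lem:MinMax}, gives $[f_i]_+(x) \ge \tau_i \dist(x,S(f))$ for all $x$, the hypothesis covers the indices with $f_i^* > 0$ (where $f_i = [f_i]_+$), and $[f]_+ = \min_{i \in I} [f_i]_+$ yields the error bound on $V$ with constant $\min_{i \in I}\tau_i$. You are also right that this needs no restriction on $V$.

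The ``only if'' direction is where your proposal goes astray, and the gap is a sign error in your own diagnosis: you write that at points where $f_i$ is not active ``$f_i(x)$ may exceed $[f]_+(x)$ and the error-bound inequality gives nothing about $f_i$'', but a \emph{lower} bound on $f_i$ is exactly what is wanted, so exceeding $[f]_+$ is harmless. Indeed, fix $i$ with $f_i^* > 0$ and $x \in V$: if $x \in S(f)$, then $\dist(x,S(f)) = 0 < f_i^* \le f_i(x)$; if $x \notin S(f)$, then $f_i(x) \ge \min_{j \in I} f_j(x) = f(x) = [f]_+(x) \ge \tau \dist(x,S(f))$. Hence $\tau_i = \tau$ works, with no decomposition of $V$ into polyhedral cells, no appeal to Theorem~\ref{thrm:ErrorBound_Boundedset}, and no growth or coercivity argument; in fact neither direction of this lemma uses the polyhedrality of $V$ at all (that hypothesis only becomes essential later, in Lemma~\ref{lem:ErrorBound_StratifiedReccCones}). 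The fallback machinery you sketch would moreover not close the argument: the claim that on each polyhedral piece of $V$ the function $f_i$ is coercive ``precisely because $f$ does on that piece'' is false --- in Example~\ref{ex:GlobalErrorBound_FlatPieces} the function $f$ has a global error bound while neither $f$ nor $f_2$ (with $f_2^* = 1 > 0$) is coercive on $\mathbb{R}^2$ --- and the necessity parts of Theorems~\ref{thrm:ErrorBound_GrowthAtInfinity} and \ref{thrm:ErrorBound_on_Cone} require the extra condition $0^+ S(f) \cap \cl \cone V = \{0\}$, which is not available here. So the statement you are proving in that direction is true, but the route you propose is both unnecessary and, as sketched, would fail; the paper's proof is exactly the one-line pointwise estimate above, together with the Hoffman argument you already used for the indices in $I_0$.
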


\begin{proof}
By equality \eqref{eq:MinMaxRepresentation} and the definition of error bound, the function $f$ has an error bound on
$V$ if and only if there exists $\tau > 0$ such that
\[
  f(x) = \min_{i \in I} f_i(x) \ge \tau \dist(x, S(f)) \quad \forall x \in V.
\]
Therefore, $f$ has an error bound on $V$ if and only if for each $i \in I$ one can find $\tau_i > 0$ for which
inequality \eqref{eq:StratifiedErrorBound} holds true. It remains to note that, as was shown in the proof of
Theorem~\ref{thrm:Robinson}, for any $i \in I$ with $f_i^* \le 0$ the function $f_i$ has a global error bound, which
with the use of Lemma~\ref{lem:MinMax} implies that inequality \eqref{eq:StratifiedErrorBound} is satisfied for any 
$i \in I_0$ with $\tau_i = \tau(f_i, \mathbb{R}^d)$.
\end{proof}

The second step is to reformulate condition \eqref{eq:StratifiedErrorBound} in geometric terms involving recession cones
of some sets. To this end, we need to prove three auxiliary results on the distance to a finite union of polyhedral
sets. 

\begin{lemma} \label{lem:BoundedDistFunc}
Let $X \subseteq \mathbb{R}^d$ be a finite union of polyhedral sets and $x, z \in \mathbb{R}^d$ be fixed. Then 
the function $\lambda \mapsto \dist(x + \lambda z, X)$ is bounded on $[0, + \infty)$, if $z \in 0^+ X$, and
$\dist(x + \lambda z, X) \to + \infty$ as $\lambda \to + \infty$, if $z \notin 0^+ X$.
\end{lemma}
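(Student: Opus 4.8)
The plan is to prove the two assertions of Lemma~\ref{lem:BoundedDistFunc} separately, writing $X = \bigcup_{k=1}^{N} P_k$ with each $P_k$ polyhedral. The key structural tool is the Motzkin theorem: write $P_k = B_k + 0^+ P_k$ for a polytope $B_k$, and recall the elementary fact (already used implicitly in the proof of Theorem~\ref{thrm:ErrorBound_GrowthAtInfinity}, via \eqref{eq:DistPolyhed_via_RecCone}) that for a polyhedral set $P = B + C$ with $B$ a polytope and $C = 0^+ P$, one has $|\dist(x, P) - \dist(x, C)| \le \sup_{y \in B}\|y\|$ for all $x$. Thus up to a bounded error, $\dist(\cdot, P_k)$ behaves like $\dist(\cdot, 0^+ P_k)$, which is positively homogeneous, and $\dist(\cdot, X)$ behaves like $\min_k \dist(\cdot, 0^+ P_k)$.

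For the first assertion, suppose $z \in 0^+ X$. By definition of the recession cone of a union (and the convexity of each $P_k$), $z \in 0^+ X$ means there is some $k$ and some $x_0 \in P_k$ with $x_0 + \lambda z \in P_k$ for all $\lambda \ge 0$; equivalently $z \in 0^+ P_{k_0}$ for at least one index $k_0$. Fix such a $k_0$ and pick $y_0 \in P_{k_0}$. Then $\dist(x + \lambda z, X) \le \dist(x + \lambda z, P_{k_0}) \le \|x + \lambda z - (y_0 + \lambda z)\| = \|x - y_0\|$, a constant independent of $\lambda$, so the function is bounded on $[0,+\infty)$.

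For the second (the harder) assertion, suppose $z \notin 0^+ X$, so $z \notin 0^+ P_k$ for every $k$. For each $k$, since $0^+ P_k$ is a closed convex cone not containing $z$, the positively homogeneous convex function $\dist(\cdot, 0^+ P_k)$ is strictly positive at $z$: indeed if $\dist(z, 0^+ P_k) = 0$ then $z \in 0^+ P_k$ since that cone is closed. Set $\beta_k := \dist(z, 0^+ P_k) > 0$ and $\beta := \min_k \beta_k > 0$ (finite minimum). By positive homogeneity, $\dist(\lambda z, 0^+ P_k) = \lambda \beta_k \ge \lambda\beta$ for $\lambda \ge 0$. Now combine with the Motzkin estimate: with $C_k := \sup_{y \in B_k}\|y\|$ and $C := \max_k C_k$, the reverse-triangle argument of \eqref{eq:DistPolyhed_via_RecCone} gives, for each $k$,
\[
  \dist(x + \lambda z, P_k) \ge \dist(x + \lambda z, 0^+ P_k) - C_k
  \ge \dist(\lambda z, 0^+ P_k) - \|x\| - C_k \ge \lambda \beta - \|x\| - C,
\]
where the middle inequality uses that $\dist(\cdot, 0^+ P_k)$ is $1$-Lipschitz (a distance function to a fixed set). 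Taking the minimum over $k$ yields $\dist(x + \lambda z, X) = \min_k \dist(x + \lambda z, P_k) \ge \lambda\beta - \|x\| - C$, and since $\beta > 0$ and $x$ is fixed, the right-hand side tends to $+\infty$ as $\lambda \to +\infty$. This completes the proof.

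The main obstacle is making the second part airtight: one must be careful that $0^+ X$ for a union is exactly the union of the $0^+ P_k$ (so that $z \notin 0^+ X$ genuinely forces $z \notin 0^+ P_k$ for \emph{every} $k$ — true because each $P_k$ is convex, so a ray lies in $P_k$ iff its direction lies in $0^+ P_k$ and $X \supseteq P_k$), and that the closedness of each $0^+ P_k$ is what converts $z \notin 0^+ P_k$ into the strict inequality $\dist(z, 0^+ P_k) > 0$. Everything else is a routine combination of the Motzkin decomposition, positive homogeneity of cone-distance functions, and the reverse triangle inequality already exploited earlier in the paper.
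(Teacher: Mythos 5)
Your proof is correct and follows essentially the same route as the paper's: decompose $X$ into its polyhedral pieces, bound $\dist(x+\lambda z, P_k)$ from below via the Motzkin decomposition and the reverse triangle inequality, and use positive homogeneity of $\dist(\cdot, 0^+ P_k)$ together with $\dist(z, 0^+ P_k)>0$ (closedness of the polyhedral recession cone) in the case $z \notin 0^+ X$. The only cosmetic difference is in the bounded case, where the paper argues directly from its definition of $0^+ X$ (take $x_0 \in X$ with $x_0 + \lambda z \in X$ for all $\lambda \ge 0$ and compare with $x + \lambda z$), whereas you first locate a single piece $P_{k_0}$ with $z \in 0^+ P_{k_0}$ --- a step that silently uses the inclusion $0^+\bigl(\bigcup_k P_k\bigr) \subseteq \bigcup_k 0^+ P_k$, which is true but needs a small pigeonhole argument on the ray and is not actually needed for this part.
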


\begin{proof}
Fix any $x, z \in \mathbb{R}^d$. If $z \in 0^+ X$, then there exists $x_0 \in X$ such that $x_0 + \lambda z \in X$ for
all $\lambda \ge 0$. Therefore
\[
  \dist(x + \lambda z, X) \le \| x + \lambda z - (x_0 + \lambda z) \| = \| x - x_0 \| \quad \forall \lambda \ge 0,
\]
i.e. the function $\lambda \mapsto \dist(x + \lambda z, X)$ is bounded on $[0, + \infty)$.

Suppose now that $z \notin 0^+ X$. By our assumption $X$ is the union of some polyhedral sets 
$X_1, \ldots, X_s \subset \mathbb{R}^d$. As is easily seen,
\[
  \dist(y, X) = \min\Big\{ \dist(y, X_1), \ldots, \dist(y, X_s) \Big\} \quad \forall y \in \mathbb{R}^d.
\]
Therefore, if we prove that $\dist(x + \lambda z, X_i) \to + \infty$ as $\lambda \to + \infty$ for any 
$i \in \{ 1, \ldots, s \}$, we can conclude that $\dist(x + \lambda z, X) \to + \infty$ as $\lambda \to + \infty$ as 
well.

Fix any $i \in \{ 1, \ldots, s \}$. Clearly, $0^+ X_i \subseteq 0^+ X$, which implies that $z \notin 0^+ X_i$. By
the Motzkin theorem one has $X_i = P_i + 0^+ X_i$ for some polytope $P_i \subset \mathbb{R}^d$, since $X_i$ is a
polyhedral set. Therefore, by the reverse triangle inequality for any $\lambda \ge 0$ one has
\begin{align*}
  \dist(x + \lambda z, X_i) 
  &= \inf\Big\{ \| x + \lambda z - (y_1 + y_2) \| \Bigm\vert y_1 \in P_i, y_2 \in 0^+ X_i \Big\}
  \\ 
  &\ge \inf\Big\{ \| \lambda z - y_2 \| - \| x \| - \| y_1 \| \Bigm\vert y_1 \in P_i, y_2 \in 0^+ X_i \Big\}
  \\
  &\ge \dist(\lambda z, 0^+ X_i) - \| x \| - \max_{y \in P_i} \| y \|.
\end{align*}
Since $0^+ X_i$ is a cone, the function $\dist(\cdot, 0^+ X_i)$ is obviously positively homogeneous. Hence
\[
  \dist(x + \lambda z, X_i) \ge \lambda r - \| x \| - \max_{y \in K_i} \| y \| \quad \forall \lambda \ge 0,
\]
where $r = \dist(z, 0^+ X_i) > 0$ (recall that $z \notin 0^+ X_i$). Thus, $\dist(x + \lambda z, X_i) \to + \infty$, and
the proof is complete.
\end{proof}

\begin{lemma} \label{lem:Dist_PolyhedralCones}
Let $Y, V \subseteq \mathbb{R}^d$ be polyhedral convex cones. Then there exists $C > 0$ such that
\begin{equation} \label{eq:EquivalentDist}
  C \dist(x, Y) \ge \dist(x, Y \cap V) \quad \forall x \in V.
\end{equation}
\end{lemma}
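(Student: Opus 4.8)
The plan is to reduce the claim to Hoffman's theorem applied to the cone $Y \cap V$. The key observation is that when $x$ ranges only over $V$, the linear inequalities describing $V$ are automatically satisfied, so the Hoffman estimate for $\dist(x, Y \cap V)$ is governed solely by the inequalities describing $Y$; those, in turn, are controlled by $\dist(x, Y)$ via an elementary projection argument valid for all $x \in \mathbb{R}^d$.

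First I would fix homogeneous representations $Y = \{ x \in \mathbb{R}^d \mid \langle a_i, x \rangle \le 0, \ i = 1, \dots, p \}$ and $V = \{ x \in \mathbb{R}^d \mid \langle b_k, x \rangle \le 0, \ k = 1, \dots, q \}$, which exist because a polyhedral convex cone is the intersection of finitely many homogeneous closed half-spaces. Discarding trivial constraints, we may assume all $a_i \ne 0$; moreover, if $Y = \mathbb{R}^d$, then $Y \cap V = V$ and inequality \eqref{eq:EquivalentDist} holds trivially with $C = 1$, so we may assume $p \ge 1$. The set $Y \cap V = \{ x \mid \langle a_i, x \rangle \le 0 \ \forall i, \ \langle b_k, x \rangle \le 0 \ \forall k \}$ is nonempty, since it contains the origin, so by Hoffman's theorem there exists $\tau > 0$ such that
\[
  \tau \dist(x, Y \cap V) \le \max\Big\{ \max_{i = 1, \dots, p} \max\{ 0, \langle a_i, x \rangle \}, \
  \max_{k = 1, \dots, q} \max\{ 0, \langle b_k, x \rangle \} \Big\} \quad \forall x \in \mathbb{R}^d.
\]
For $x \in V$ one has $\langle b_k, x \rangle \le 0$ for all $k$, so the second inner maximum vanishes and the right-hand side reduces to $\max_i \max\{ 0, \langle a_i, x \rangle \}$.

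It remains to bound this quantity by $\dist(x, Y)$. Fix an arbitrary $x \in \mathbb{R}^d$ and pick a nearest point $p \in Y$ to $x$, i.e. $\| x - p \| = \dist(x, Y)$; such a point exists because $Y$ is nonempty and closed. Since $\langle a_i, p \rangle \le 0$ and $\langle a_i, u \rangle \le \| a_i \|_* \| u \|$ for all $u$, where $\| a \|_* := \sup_{\| y \| \le 1} \langle a, y \rangle$ is the dual norm, I obtain $\langle a_i, x \rangle = \langle a_i, p \rangle + \langle a_i, x - p \rangle \le \| a_i \|_* \| x - p \| = \| a_i \|_* \dist(x, Y)$, hence $\max\{ 0, \langle a_i, x \rangle \} \le \| a_i \|_* \dist(x, Y)$ for every $i$. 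Combining this with the displayed Hoffman estimate, for all $x \in V$ we get
\[
  \tau \dist(x, Y \cap V) \le \max_{i = 1, \dots, p} \max\{ 0, \langle a_i, x \rangle \}
  \le \Big( \max_{i = 1, \dots, p} \| a_i \|_* \Big) \dist(x, Y),
\]
so \eqref{eq:EquivalentDist} holds with $C = \tau^{-1} \max_{i} \| a_i \|_*$. There is no genuine obstacle here; the only point requiring care is to arrange the estimate so that the contribution of the $V$-defining inequalities to the Hoffman bound disappears on $V$, which is exactly what the restriction $x \in V$ provides.
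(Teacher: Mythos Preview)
Your proof is correct and takes a genuinely different, more elementary route than the paper. The paper works dually: it rewrites the desired inequality as an inclusion of subdifferentials, $\partial q(0) \subseteq \partial p_C(0) + V^*$, then uses the polar formula $(Y \cap V)^* = Y^* + V^*$ for polyhedral cones and a Carath\'{e}odory-type argument to show that every unit vector in $Y^* + V^*$ can be decomposed as $v_1 + v_2$ with $v_1 \in Y^*$ of uniformly bounded norm. This is a fairly involved convex-analytic construction.

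Your argument, by contrast, stays entirely on the primal side and uses only two ingredients already available in the paper's preliminaries: Hoffman's theorem applied to the joint system defining $Y \cap V$, and the elementary projection estimate $[\langle a_i, x \rangle]_+ \le \| a_i \|_* \dist(x, Y)$. The restriction $x \in V$ kills the $V$-constraint residuals in the Hoffman bound, which is the only idea needed. This is considerably shorter and avoids the machinery of polar cones, subdifferential calculus, and Carath\'{e}odory's theorem. What the paper's approach buys is a more structural picture (the dual inclusion $(Y \cap V)^* \subseteq Y^*_{\le C} + V^*$), but for the purpose of the lemma as stated your route is both cleaner and fully rigorous.
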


\begin{proof}
We prove the lemma in the case when $\| \cdot \|$ is the Euclidean norm. Clearly, the validity of the lemma in the
general case follows directly from its validity in the Euclidean case.

Choose some $C > 0$ and introduce the functions
\[
  p_C(x) = C \dist(x, Y), \quad q(x) = \dist(x, Y \cap V) \quad \forall x \in \mathbb{R}^d.
\]
Since $Y$ and $V$ are convex cones, the functions $p_C$ and $q$ are sublinear. Recall that a sublinear function is
equal to the support function of its subdifferential at the origin (see, e.g. \cite[Thm.~V.3.1.1]{Lemarechal}), that is
\[
  p_C(x) = \max_{v \in \partial p_C(0)} \langle v, x \rangle, \quad
  q(x) = \max_{w \in \partial q(0)} \langle w, x \rangle \quad \forall x \in \mathbb{R}^d.
\]
Therefore, inequality \eqref{eq:EquivalentDist} holds true if and only if
\[
  \max_{v \in \partial p_C(0) - w} \langle v, x \rangle \ge 0 \quad \forall x \in V
  \quad \forall w \in \partial q(0).
\]
With the use of the separation theorem one can readily check that this inequality is satisfied if and only if the set
$(\partial p_C(0) - w) \cap (- V^*)$ is nonempty for any $w \in \partial q(0)$ or, equivalently, if and only if
\[
  0 \in \partial p_C(0) - w + V^* \quad \forall w \in \partial q(0).
\]
The inclusion above can be rewritten as
\begin{equation} \label{eq:DistQuasidiffCond}
  \partial q(0) \subseteq \partial p_C(0) + V^*.
\end{equation}
Thus, inequality \eqref{eq:EquivalentDist} is satisfied for some $C > 0$ if and only if inclusion
\eqref{eq:DistQuasidiffCond} is satisfied for the same $C > 0$. Let us prove that this inclusion holds true, provided 
$C > 0$ is large enough.

Indeed, note that by \cite[Example~2.130]{BonnansShapiro} one has
\[
  \partial p_C(0) = \Big\{ v \in Y^* \Bigm\vert \| v \| \le C \Big\}, \quad
  \partial q(0) = \Big\{ w \in (Y \cap V)^* \Bigm\vert \| w \| \le 1 \Big\},
\]
Moreover, by \cite[Cor.~16.4.2]{Rockafellar} (see also \cite[Thm.~16.4 and Thm.~20.1]{Rockafellar}) one has
$(Y \cap V)^* = Y^* + V^*$, since both $Y$ and $V$ are polyhedral cones. Thus, inclusion
\eqref{eq:DistQuasidiffCond} can be rewritten as
\[
  \Big\{ w \in Y^* + V^* \Bigm\vert \| w \| \le 1 \Big\} \subseteq 
  \Big\{ v \in Y^* \Bigm\vert \| v \| \le C \Big\} + V^*.
\]
Consequently, it is sufficient to prove that for any $w \in Y^* + V^*$ with $\| w \| \le 1$ one can find $v_1 \in Y^*$
with $\| v_1 \| \le C$ and $v_2 \in V^*$ such that $w = v_1 + v_2$. Note that if $Y^* + V^* = \{ 0 \}$, then this claim
is obvious. Therefore, one can suppose that $Y^* + V^* \ne \{ 0 \}$.

To prove the existence of the required $C > 0$, note that the cones $Y^*$ and $V^*$ are polyhedral by
\cite[Cor.~19.2.2]{Rockafellar}, since the cones $Y$ and $V$ are polyhedral. Therefore by \cite[Thm.~19.1]{Rockafellar}
both cones $Y^*$ and $V^*$ are finitely generated, that is, they are the convex conic hulls of some vectors 
$y_1, \ldots, y_n \in Y^*$ and $z_1, \ldots, z_k \in V^*$. 

Let $\mathcal{M}$ be the collection of all nonempty subsets $M$ of the set 
$\{ 1, \ldots, n \} \times \{ 1, \ldots, k \}$ such that the vectors $y_i + z_j$, $(i, j) \in M$, are linearly
independent. Note that the set $\mathcal{M}$ is nonempty, since, as was noted above, one can assume that 
$Y^* + V^* \ne \{ 0 \}$ and the cone $Y^* + V^*$ is obviously the convex conic hull of
the vectors $y_i + z_j$, $i \in \{ 1, \ldots, n \}$, $j \in \{ 1, \ldots, k \}$.

For any $M \in \mathcal{M}$ introduce linear subspace $\mathcal{E}_M = \linhull\{ y_i + z_j \mid (i, j) \in M \}$. By
definition, for any $x \in \mathcal{E}_M$ there exist unique $\alpha(i, j) \in \mathbb{R}$, $(i, j) \in M$, such that
$x = \sum_{(i, j) \in M} \alpha_{ij} (y_i + z_j)$. Denote by $\| x \|_M = \sum_{(i, j) \in M} \vert \alpha_{ij} \vert$.
Clearly, $\| \cdot \|_M$ is a norm on $\mathcal{E}_M$. Therefore, it is equivalent to the Euclidean norm, which, in
particular, implies that there exists $C_M > 0$ such that $\| x \|_M \le C_M \| x \|$ for all $x \in \mathcal{E}_M$.

Now, fix any $w \in Y^* + V^*$ with $\| w \| \le 1$. Since the cone $Y^* + V^*$ is the convex conic hull of
the vectors $\{ y_i + z_j \}$, by the version of Carath\'{e}odory's theorem for convex cones 
\cite[Cor.~17.1.2]{Rockafellar} there exists $M \in \mathcal{M}$ such that $w$ can be represented as the convex conic
combination of the vectors $\{ y_i + z_j \}$, $(i, j) \in M$, that is, one can find $\alpha_{ij} \ge 0$, $(i, j) \in M$,
such that  $w = \sum_{(i, j) \in M} \alpha_{ij} (y_i + z_j)$. Moreover, one has 
$\sum_{(i, j) \in M} \alpha_{ij} \le C_M$. 

Define 
\[
  v_1 = \sum_{(i, j) \in M} \alpha_{ij} y_i, \quad
  v_2 = \sum_{(i, j) \in M} \alpha_{ij} z_j. 
\]
Then $v_1 \in Y^*$, $v_2 \in V^*$, and $w = v_1 + v_2$. Moreover, one has
\[
  \| v_1 \| \le \sum_{(i, j) \in M} \alpha_{ij} \| y_i \| \le C_M \max\Big\{ \| y_1 \|, \ldots, \| y_n \| \Big\}. 
\]
Thus, we have proved that for any $w \in Y^* + V^*$ with $\| w \| \le 1$ one can find $v_1 \in Y^*$ and $v_2 \in V^*$
such that $w = v_1 + v_2$ and $\| v_1 \| \le C$, where 
\[
  C = \Big( \max_{M \in \mathcal{M}} C_M \Big) \max\Big\{ \| y_1 \|, \ldots, \| y_n \| \Big\}.
\]
Note that $C < + \infty$, since the collection $\mathcal{M}$ consists of a finite number of sets.
\end{proof}

As the following example shows, the conclusion of the previous lemma does not hold true in the case when either of the
cones $Y$ and $V$ is not polyhedral.

\begin{example}
Let $d = 3$, $Y = \{ x \in \mathbb{R}^3 \mid x^{(1)} = 0 \}$, and $V$ be the convex conic hull of the disc
\[
  D = \Big\{ x \in \mathbb{R}^3 \Bigm\vert (x^{(1)} - 1)^2 + (x^{(2)})^2 \le 1, \: x^{(3)} = 1 \Big\}.
\]
Note that the cone $V$ is not polyhedral in this case.

Let $\| \cdot \|$ be the Euclidean norm. Then $Y \cap V = \{ x \in \mathbb{R}^3 \mid x^{(1)} = x^{(2)} = 0 \}$ and, as
is easily seen, 
\[
  \dist(x, Y \cap V) = \sqrt{(x^{(1)})^2 + (x^{(2)})^2}, \quad
  \dist(x, Y) = \vert x^{(1)} \vert \quad \forall x \in V.
\]
Observe that for $x(t) = (1 + \sin t, \cos t, 1) \in V$ the inequality 
\[
  \dist(x(t), Y \cap V) \le C \dist(x(t), Y) \quad \forall t \in \mathbb{R}
\]
is not satisfied for any $C > 0$, since
\begin{multline*}
  \dist(x(t), Y \cap V)^2 = 2(1 + \sin t) > C^2 (1 + \sin t)^2 = C^2 \dist(x(t), Y)^2
  \\
  \forall t \in \left( \frac{3 \pi}{2} - \varepsilon, \frac{3 \pi}{2} \right) 
  \cup \left( \frac{3 \pi}{2}, \frac{3 \pi}{2} + \varepsilon \right)
\end{multline*}
for any sufficiently small $\varepsilon > 0$, depending on $C$, due to the fact that
\[
  \lim_{t \to \frac{3 \pi}{2}} \frac{2(1 + \sin t)}{(1 + \sin t)^2} = + \infty.
\] 
Thus, the conclusion of
Lemma~\ref{lem:Dist_PolyhedralCones} does not hold true when the cone $V$ is not polyhedral. Switching $Y$ and $V$ one
can check that the conclusion of this lemma does not hold true in the case when the cone $Y$ is not polyhedral either.
\end{example}

Now we are ready to prove the main result on the distance to a finite union of polyhedral sets that is the key part of
the proof of Theorem~\ref{thrm:ErrorBound_RecessionCones}.

\begin{lemma} \label{lem:Dist_RecessionCones}
Let $X, Y, V \subseteq \mathbb{R}^d$ be finite unions of polyhedral sets. Then the inequality
\begin{equation} \label{eq:DistVsDist}
  \dist(x, X) \le C \dist(x, Y) + \theta \quad \forall x \in V
\end{equation}
is satisfied for some $C > 0$ and $\theta \ge 0$ if and only if $0^+(Y \cap V) \subseteq 0^+ X$.
\end{lemma}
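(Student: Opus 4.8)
The plan is to prove the two implications separately, reducing the (harder) sufficiency direction to the already-established Lemma~\ref{lem:Dist_PolyhedralCones} by passing to recession cones.

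\emph{Necessity} is short. Suppose \eqref{eq:DistVsDist} holds and fix $z \in 0^+(Y \cap V)$. Then there is $p \in Y \cap V$ with $p + \lambda z \in Y \cap V$ for all $\lambda \ge 0$; in particular $p + \lambda z \in V$ and $\dist(p + \lambda z, Y) = 0$, so \eqref{eq:DistVsDist} gives $\dist(p + \lambda z, X) \le \theta$ for every $\lambda \ge 0$. Thus $\lambda \mapsto \dist(p + \lambda z, X)$ is bounded on $[0, +\infty)$, and Lemma~\ref{lem:BoundedDistFunc} yields $z \in 0^+ X$. Hence $0^+(Y \cap V) \subseteq 0^+ X$.

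For \emph{sufficiency}, write $X = \bigcup_m X_m$, $Y = \bigcup_l Y_l$, $V = \bigcup_k V_k$ as finite unions of polyhedral sets and put $\widehat{X} = \bigcup_m 0^+ X_m$, $\widehat{Y} = \bigcup_l 0^+ Y_l$, $\widehat{V} = \bigcup_k 0^+ V_k$. The argument proceeds in three steps. First, applying the Motzkin theorem ($X_m = P_m + 0^+ X_m$ with $P_m$ a polytope) together with the reverse triangle inequality, exactly as in the proof of Lemma~\ref{lem:BoundedDistFunc}, one obtains constants $C^X, C^Y \ge 0$ with $\dist(x, X) \le \dist(x, \widehat{X}) + C^X$ and $\dist(x, \widehat{Y}) \le \dist(x, Y) + C^Y$ for all $x \in \mathbb{R}^d$ (using $\dist(\cdot, \bigcup \cdot) = \min \dist(\cdot, \cdot)$ to pass the one-component bounds to the unions). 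Second, for $x \in V$ one writes $x = p + v$ with $x \in V_k$, $p \in P_{V_k}$ bounded and $v \in 0^+ V_k \subseteq \widehat{V}$, so the triangle inequality gives $\dist(x, \widehat{X}) \le \dist(v, \widehat{X}) + C^V$ and $\dist(v, \widehat{Y}) \le \dist(x, \widehat{Y}) + C^V$ with $C^V = \max_k \sup_{y \in P_{V_k}} \| y \|$. Third, it remains to find $C > 0$ with $\dist(v, \widehat{X}) \le C \dist(v, \widehat{Y})$ for all $v \in \widehat{V}$; chaining the three steps and the triangle inequalities then produces \eqref{eq:DistVsDist} with this $C$ and a suitable $\theta \ge 0$ assembled from $C$, $C^X$, $C^Y$, $C^V$.

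To carry out the third step, fix $v \in \widehat{V}$, pick $k$ with $v \in 0^+ V_k$ and $l$ with $\dist(v, \widehat{Y}) = \dist(v, 0^+ Y_l)$. Since $0^+$ distributes over finite unions of closed convex sets (the argument used for the second statement of Lemma~\ref{lem:MinMax}), $0^+ X = \widehat{X}$; and since $Y_l \cap V_k$ is a polyhedral convex set, the standard identity $0^+(Y_l \cap V_k) = 0^+ Y_l \cap 0^+ V_k$ gives $0^+ Y_l \cap 0^+ V_k = 0^+(Y_l \cap V_k) \subseteq 0^+(Y \cap V) \subseteq 0^+ X = \widehat{X}$. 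Hence $\dist(v, \widehat{X}) \le \dist(v, 0^+ Y_l \cap 0^+ V_k)$, and applying Lemma~\ref{lem:Dist_PolyhedralCones} to the polyhedral convex cones $0^+ Y_l$ and $0^+ V_k$ yields $\dist(v, 0^+ Y_l \cap 0^+ V_k) \le C_{kl} \dist(v, 0^+ Y_l) = C_{kl} \dist(v, \widehat{Y})$; take $C = \max_{k,l} C_{kl}$. The main obstacle is bookkeeping: the reductions in the first two steps are routine but the additive constants must be tracked carefully, and the whole scheme is designed so that the only nontrivial polyhedral input is Lemma~\ref{lem:Dist_PolyhedralCones}, together with the recession-cone identities $0^+(\bigcup_m X_m) = \bigcup_m 0^+ X_m$ and $0^+(A \cap B) = 0^+ A \cap 0^+ B$ for polyhedral $A,B$ (the latter used in the regime where the relevant component intersection is nonempty, the remaining degenerate cases being handled directly).
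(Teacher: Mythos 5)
Your proposal is correct and takes essentially the same route as the paper's proof: necessity via Lemma~\ref{lem:BoundedDistFunc} applied along a ray in $Y \cap V$, and sufficiency by decomposing $X$, $Y$, $V$ into polyhedral components, passing to the component recession cones via the Motzkin theorem while tracking additive constants, and invoking Lemma~\ref{lem:Dist_PolyhedralCones} for the pairs of polyhedral cones $0^+ Y_l$, $0^+ V_k$ through the identities $0^+ X = \bigcup_m 0^+ X_m$ and $0^+(Y_j \cap V_k) = 0^+ Y_j \cap 0^+ V_k$. The only differences are cosmetic (order of the reductions, your use of $1$-Lipschitzness of the distance function where the paper uses sublinearity of the distance to a convex cone, and choosing a minimizing index $l$), and the empty-intersection caveat you flag for the identity $0^+(Y_j \cap V_k) = 0^+ Y_j \cap 0^+ V_k$ is passed over in exactly the same way in the paper's own proof.
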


\begin{proof}
Let inequality \eqref{eq:DistVsDist} hold true for some $C > 0$ and $\theta \ge 0$. Suppose by contradiction that there
exists $z \in 0^+(Y \cap V)$ such that $z \notin 0^+ X$. Then by the definition of the recession cone
$x_0 + \lambda z \in Y \cap V$ for some $x_0 \in Y \cap V$ and all $\lambda \ge 0$, which yields 
$\dist(x_0 + \lambda z, Y) = 0$ for all $\lambda \ge 0$. On the other hand, by Lemma~\ref{lem:BoundedDistFunc} one has
$\dist(x_0 + \lambda z, X) \to + \infty$ as $\lambda \to + \infty$, which obviously contradicts inequality
\eqref{eq:DistVsDist}.

Suppose now that $0^+(Y \cap V) \subseteq 0^+ X$. We will prove inequality \eqref{eq:DistVsDist} by reducing the proof
to the particular case when $Y$ and $V$ are polyhedral cones.

Indeed, by our assumption 
\begin{equation} \label{eq:PolyhedralUnion}
  X = \bigcup_{i = 1}^{n} X_i, \quad Y = \bigcup_{j = 1}^s Y_j, \quad V = \bigcup_{k = 1}^t V_k
\end{equation}
for some $n, s, t \in \mathbb{N}$ and some polyhedral sets $X_i, Y_j, V_k \subseteq \mathbb{R}^d$. As is easily seen,
one has
\[
  0^+ X = \bigcup_{i = 1}^{n} 0^+ X_i, \quad 
  0^+ (Y \cap V) = \bigcup_{j = 1}^s \bigcup_{k = 1}^t 0^+ (Y_j \cap V_k)
\]
and, furthermore, $0^+ (Y_j \cap V_k) = 0^+ Y_j \cap 0^+ V_k$ (see, e.g. \cite[Cor.~8.3.3]{Rockafellar}). Thus, for
any $j \in J_Y := \{ 1, \ldots, s \}$ and $k \in K_V := \{ 1, \ldots, t \}$ one has
\[
  0^+ Y_j \cap 0^+ V_k \subseteq 0^+ X,
\]
which obviously implies that
\begin{equation} \label{eq:DistIneq_ReccCones_Inter}
  \dist(x, 0^+ Y_j \cap 0^+ V_k) \ge \dist(x, 0^+ X) \quad \forall x \in \mathbb{R}^d.
\end{equation}
Note that the recession cones $0^+ Y_j$ and $0^+ V_k$ are polyhedral by \cite[Thm.~19.5]{Rockafellar}, since the sets
$Y_j$ and $V_k$ are polyhedral. Therefore, by Lemma~\ref{lem:Dist_PolyhedralCones} for any $j \in J_Y$ and $k \in K_V$
there exists $C_{jk} > 0$ such that
\[
  C_{jk} \dist(x, 0^+ Y_j) \ge \dist(x, 0^+ Y_j \cap 0^+ V_k) \quad \forall x \in 0^+ V_k. 
\]
Hence with the use of \eqref{eq:DistIneq_ReccCones_Inter} one obtains that for any $j \in J_Y$ the following inequality
holds true:
\[
  \Big( \max_{k \in K_V} C_{jk} \Big) \dist(x, 0^+ Y_j) \ge \dist(x, 0^+ X)
  \quad \forall x \in 0^+ V = \bigcup_{k = 1}^t 0^+ V_k.
\]
Finally, taking into account the obvious equality
\[
  \dist(x, 0^+ Y) = \dist\Big(x, \bigcup_{j = 1}^s 0^+ Y_j \Big) = \min_{j \in J_Y} \dist(x, 0^+ Y_j)
  \quad \forall x \in \mathbb{R}^d
\]
one gets that
\begin{equation} \label{eq:DistIneq_ReccCones}
  C \dist(x, 0^+ Y) \ge \dist(x, 0^+ X) \quad \forall x \in 0^+ V,
\end{equation}
where $C = \max\{ C_{jk} \mid j \in J_Y, \: k \in K_V \}$.

By the Motzkin theorem for each $i \in N_X := \{ 1, \ldots, n \}$, $j \in J_Y$, and $k \in K_V$ there exist polytopes
$P(X_i), P(Y_j), P(V_k) \subset \mathbb{R}^d$ such that
\[
  X_i = P(X_i) + 0^+ X_i, \quad Y_j = P(Y_j) + 0^+ Y_j, \quad V_k = P(V_k) + 0^+ V_k,
\]
since the sets $X_i$, $Y_j$, and $V_k$ are polyhedral by our assumption. Define
\[
  \theta_1 = \max\Big\{ \max_{i \in N_X} \max_{v \in P(X_i)} \| v \|, 
  \: \max_{j \in J_Y} \max_{w \in P(Y_j)} \| w_j \| \Big\}.
\]
As is easily seen, by the reverse triangle inequality for any $x \in \mathbb{R}^d$ one has
\begin{equation} \label{eq:DistVsDistToReccCone}
  \big\vert \dist(x, X_i) - \dist(x, 0^+ X_i) \big\vert \le \theta_1, \quad
  \big\vert \dist(x, Y_j) - \dist(x, 0^+ Y_j) \big\vert \le \theta_1 
\end{equation}
for any $i \in N_X$ and $j \in J_Y$. Put also $\theta_2 = \max\{ \| x \| \mid x \in P(V_k), k \in K_V \}$ and
\[
  \theta_3 = \max\Big\{ \max_{i \in N_X, k \in K_V} \max_{x \in P(V_k)} \dist(x, 0^+ X_i), \: 
  \max_{j \in J_Y, k \in K_V} \max_{x \in P(V_k)} \dist(x, 0^+ Y_j) \Big\}.
\]
Clearly, $\theta_3 < + \infty$, since the sets $P(V_k)$ are compact and the corresponding distance functions are
continuous.

Choose any $k \in K_V$ and $x \in V_k$. Then $x = x_1 + x_2$ for some $x_1 \in P(V_k)$ and $x_2 \in 0^+ V_k$.
Applying inequalities \eqref{eq:DistIneq_ReccCones} and \eqref{eq:DistVsDistToReccCone}, and the fact that the distance
to a convex cone is a sublinear function one gets that
\begin{align*}
  \dist&(x, X) = \min_{i \in N_X} \dist(x, X_i)
  \le \theta_1 + \min_{i \in N_X} \dist(x, 0^+ X_i) 
  \\
  &\le \theta_1 + \theta_3 + \min_{i \in N_X} \dist(x_2, 0^+ X_i)
  = \theta_1 + \theta_3 + \dist(x_2, 0^+ X)
  \\
  &\le \theta_1 + \theta_3 + C \dist(x_2, 0^+ Y) = \theta_1 + \theta_3 + C \min_{j \in J_Y} \dist(x_2, 0^+ Y_j)
  \\
  &\le \theta_1 + C \theta_2 + \theta_3 + C \min_{j \in J_Y} \dist(x, 0^+ Y_j)
  \\
  &\le 2\theta_1 + C \theta_2 + \theta_3 + C \min_{j \in J_Y} \dist(x, Y_j)
  = 2 \theta_1 + C \theta_2 + \theta_3 + C \dist(x, Y).
\end{align*}
Since $k \in K_V$ and $x \in V_k$ were chosen arbitrary, one can conclude that inequality \eqref{eq:DistVsDist} holds
true with $\theta = 2 \theta_1 + C \theta_2 + \theta_3$. 
\end{proof}

Lemmas~\ref{lem:BoundedDistFunc} and \ref{lem:Dist_RecessionCones} enable us to reformulate condition
\eqref{eq:StratifiedErrorBound} from Lemma~\ref{lem:PositiveErrorBound} in geometric terms.

\begin{lemma} \label{lem:ErrorBound_StratifiedReccCones}
Under the assumptions of Lemma~\ref{lem:PositiveErrorBound} the function $f$ has an error bound on $V$ if and only if
for any $i \in I$ such that $f_i^* > 0$ one has 
\begin{equation} \label{eq:PosConvex_V_Inclusion}
  0^+ (S(f_i - f_i^*) \cap V) \subseteq 0^+ S(f).
\end{equation}
\end{lemma}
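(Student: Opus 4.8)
The plan is to use Lemma~\ref{lem:PositiveErrorBound} to reduce the claim to the family of convex inequalities $f_i(x) \ge \tau_i \dist(x, S(f))$ on $V$ (one for each $i \in I$ with $f_i^* > 0$), and then to recognise each of these, via a two-sided comparison of $f_i$ with the distance to its minimiser set, as an instance of the recession-cone criterion in Lemma~\ref{lem:Dist_RecessionCones}. Fix $i \in I$ with $f_i^* > 0$ and write $M_i := S(f_i - f_i^*)$; by \eqref{eq:ConvexFuncSublevelSet} this is the solution set of a linear system, namely the (polyhedral) set of global minimisers of $f_i$, which is nonempty because a bounded-below piecewise affine function attains its infimum (as in the proof of Theorem~\ref{thrm:Robinson}). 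By Lemma~\ref{lem:PositiveErrorBound} it suffices to prove, for each such $i$, the equivalence
\[
  \bigl[\, \exists\, \tau_i > 0 : f_i(x) \ge \tau_i \dist(x, S(f)) \ \ \forall x \in V \,\bigr]
  \iff 0^+(M_i \cap V) \subseteq 0^+ S(f).
\]

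The heart of the matter is to sandwich $f_i$ between positive multiples of $\dist(\cdot, M_i)$ up to an additive constant. Applying Hoffman's theorem to the linear system defining $M_i$, exactly as in Case~I of the proof of Theorem~\ref{thrm:Robinson} but with $f_i - f_i^*$ in place of $f_i$, yields $\mu_i > 0$ with $\mu_i \dist(x, M_i) \le f_i(x) - f_i^*$ for all $x \in \mathbb{R}^d$. On the other hand $f_i$ is globally Lipschitz with constant $L_i = \max_{j \in J(i)} \| v_{ij} \|_*$ and equals $f_i^*$ on $M_i$, so $f_i(x) - f_i^* \le L_i \dist(x, M_i)$ for all $x$. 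Since $f_i^* > 0$, these two bounds show that $f_i(x) \ge \tau_i \dist(x, S(f))$ holds on $V$ for some $\tau_i > 0$ if and only if $\dist(x, S(f)) \le C \dist(x, M_i) + \theta$ holds on $V$ for some $C > 0$ and $\theta \ge 0$: in the forward direction combine $\dist(x, S(f)) \le \tau_i^{-1} f_i(x)$ with the Lipschitz upper bound, and in the backward direction combine $\dist(x, M_i) \le \mu_i^{-1} f_i(x)$ with the trivial estimate $\theta \le (\theta / f_i^*) f_i(x)$.

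Finally, I apply Lemma~\ref{lem:Dist_RecessionCones} with $X = S(f)$, $Y = M_i$, and the given set $V$. All three are finite unions of polyhedral sets --- $S(f) = \bigcup_{i \in I_0} S(f_i)$ by Lemma~\ref{lem:MinMax}, $M_i$ is polyhedral, and $V$ is a finite union of polyhedral sets by hypothesis --- so Lemma~\ref{lem:Dist_RecessionCones} says precisely that $\dist(x, S(f)) \le C \dist(x, M_i) + \theta$ can be arranged on $V$ if and only if $0^+(M_i \cap V) \subseteq 0^+ S(f)$. Composing this with the equivalence of the previous paragraph proves the displayed claim for each $i$, and hence the lemma. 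I expect the only fiddly point to be the constant-bookkeeping in the sandwich step, in particular making the additive constant disappear in the passage to $f_i(x) \ge \tau_i \dist(x, S(f))$ via $f_i^* \le f_i(x)$, together with checking the polyhedral-union hypotheses of Lemma~\ref{lem:Dist_RecessionCones}. For the \emph{only if} direction one can, if desired, bypass Lemma~\ref{lem:Dist_RecessionCones} entirely: given $z \in 0^+(M_i \cap V)$, pick $x_0 \in M_i \cap V$ with $x_0 + \lambda z \in M_i \cap V$ for all $\lambda \ge 0$; then $f_i(x_0 + \lambda z) = f_i^*$ forces $\dist(x_0 + \lambda z, S(f)) \le f_i^*/\tau_i$ for all $\lambda \ge 0$, whence $z \in 0^+ S(f)$ by Lemma~\ref{lem:BoundedDistFunc}. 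The \emph{if} direction is the one that genuinely relies on the distance-comparison machinery.
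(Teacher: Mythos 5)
Your proof is correct and follows essentially the same route as the paper: reduce via Lemma~\ref{lem:PositiveErrorBound} to the per-$i$ inequality $f_i \ge \tau_i \dist(\cdot, S(f))$ on $V$, get the lower bound $f_i - f_i^* \ge \mu_i \dist(\cdot, S(f_i - f_i^*))$ from Hoffman's theorem, absorb the additive constant using $f_i^* > 0$, and invoke Lemma~\ref{lem:Dist_RecessionCones} with $X = S(f)$, $Y = S(f_i - f_i^*)$. The only cosmetic difference is that for necessity you route through the Lipschitz upper bound and the ``only if'' half of Lemma~\ref{lem:Dist_RecessionCones}, whereas the paper argues directly with Lemma~\ref{lem:BoundedDistFunc} --- an alternative you also note.
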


\begin{proof}
Fix any $i \in I$ such that $f_i^* > 0$. Our aim is to show that the inequality 
\begin{equation} \label{eq:PosConvexErrorBound}
  f_i(x) \ge \tau_i \dist(x, S(f)) \quad \forall x \in V
\end{equation}
is satisfied if and only if the inclusion \eqref{eq:PosConvex_V_Inclusion} hold true. Then applying 
Lemma~\ref{lem:PositiveErrorBound} one obtains the required result.

Let inequality \eqref{eq:PosConvexErrorBound} be satisfied for some $\tau_i > 0$. Suppose by contradiction that
there exists $z \in 0^+ (S(f_i - f_i^*) \cap V)$ such that $z \notin 0^+ S(f)$. By the definition
of the recession cone one can find $x_0 \in S(f_i - f_i^*) \cap V$ such that $x_0 + \lambda z \in S(f_i - f_i^*) \cap V$
for all $\lambda \ge 0$. Hence, in particular, $f_i(x_0 + \lambda z) = f_i^*$ for all $\lambda \ge 0$. In turn, from
Lemma~\ref{lem:BoundedDistFunc} and the condition  $z \notin 0^+ S(f)$ it follows that 
$\dist(x_0 + \lambda z, S(f)) \to + \infty$ as $\lambda \to + \infty$, which contradicts the fact that 
by our assumtion
\[
  f_i^* = f_i(x_0 + \lambda z) \ge \tau_i \dist(x_0 + \lambda z, S(f)) \quad \forall \lambda \ge 0.
\]
Thus, condition \eqref{eq:PosConvex_V_Inclusion} holds true.

Conversely, suppose that the inclusion \eqref{eq:PosConvex_V_Inclusion} holds true. Then by
Lemma~\ref{lem:Dist_RecessionCones} there exist $C > 0$ and $\theta > 0$ such that
\begin{equation} \label{eq:DistUpperEstimate}
  \dist(x, S(f)) \le \theta + C \dist(x, S(f_i - f_i^*)) \quad \forall x \in V.
\end{equation}
Applying Hoffman's Theorem in precisely the same way as in the proof of Theorem~\ref{thrm:Robinson} one gets that 
the function $f_i - f_i^*$ has a global error bound with some constant $\tau_i > 0$, that is,
\[
  f_i(x) \ge f_i^* + \tau_i \dist( x, S(f_i - f_i^*) ) \quad \forall x \in \mathbb{R}^d.
\]
Recall that $f_i^* > 0$ by our assumption. Consequently, decreasing $\tau_i > 0$, if necessary, one can suppose that
$f_i^* / \tau_i \ge \theta / C$, which with the use of \eqref{eq:DistUpperEstimate} implies that
\[
  f_i(x) \ge \tau_i \Big( \frac{f_i^*}{\tau_i} + \dist( x, S(f_i - f_i^*) ) \Big)
  \ge \frac{\tau_i}{C} \dist(x, S(f)) \quad \forall x \in V.
\]
Thus, the proof is complete.
\end{proof}

Finally, it remains to combine all the lemmas above into a coherent proof.

\textbf{Proof of Theorem~\ref{thrm:ErrorBound_RecessionCones}}: If the function $f$ has an error bound on $V$, then by
Lemma~\ref{lem:ErrorBound_StratifiedReccCones} the inclusion $0^+ (S(f_i - f_i^*) \cap V) \subseteq 0^+ S(f)$ holds true
for any $i \in I$ such that $f_i^* > 0$. Hence with the use of Lemma~\ref{lem:RecFuncSublevel} and the fact that
\begin{equation} \label{eq:RecConeIntersection}
  0^+(X \cap V) = 0^+ X \cap 0^+ V
\end{equation}
for any set $X \subset \mathbb{R}^d$ that is a finite union of closed convex sets 
(see, e.g. \cite[Cor.~8.3.3]{Rockafellar}) one obtains that
\begin{align*}
  S([f]_+^{\infty}) \cap 0^+ V &= \Big( 0^+ S(f) \cap 0^+ V \Big) 
  \cup \Big( \bigcup_{i \in I \colon f_i^* > 0} 0^+ S(f_i - f_i^*) \cap 0^+ V \Big)
  \\
  &\subseteq 0^+ S(f) \cup \Big( \bigcup_{i \in I \colon f_i^* > 0} 0^+ \big( S(f_i - f_i^*) \cap V \big) \Big)
  \subseteq 0^+ S(f).
\end{align*}
Let us prove the converse statement. Suppose that $S([f]_+^{\infty}) \cap 0^+ V \subseteq 0^+ S(f)$. Then by equality
\eqref{eq:RecConeIntersection} and Lemma~\ref{lem:RecFuncSublevel} for any $i \in I$ such that $f_i^* > 0$ one has
\[
  0^+ (S(f_i - f_i^*) \cap V) = 0^+ S(f_i - f_i^*) \cap 0^+ V \subseteq S([f]_+^{\infty}) \cap 0^+ V
  \subseteq 0^+ S(f),
\]
which by Lemma~\ref{lem:ErrorBound_StratifiedReccCones} implies that $f$ has an error bound on $V$. \qed

Let us illustrate Theorem~\ref{thrm:ErrorBound_RecessionCones} and some of the lemmas above by applying them to 
two simple examples.

\begin{example} \label{ex:GlobalErrorBound_RecCones}
Let $d = 2$ and $f$ be defined as in Example~\ref{ex:GlobalErrorBound_FlatPieces}, that is,
\[
  f(x) = \begin{cases}
    0, & \text{ if } x^{(1)} \le 0, 
    \\
    x^{(1)}, & \text{ if } 0 \le x^{(1)} \le 1, 
    \\
    1, & \text{ if } 1 \le x^{(1)} \le 2,
    \\
    x^{(1)} - 1, & \text{ if } x^{(1)} \ge 2.
  \end{cases}
\]
Then 
\[
  [f]_+^{\infty}(x) = \begin{cases}
    0, & \text{ if } x^{(1)} \le 0,
    \\
    x^{(1)}, & \text{ if } x^{(1)} > 0
  \end{cases}
  = \max\{ 0, x^{(1)} \}.
\]
Therefore $S(f) = S([f]_+^{\infty}) = \{ x \in \mathbb{R}^2 \mid x^{(1)} \le 0 \}$ and $f$ has a global error bound 
by Theorem~\ref{thrm:ErrorBound_RecessionCones}.

Let us now apply Lemma~\ref{lem:ErrorBound_StratifiedReccCones}. Observe that $f(x) = \min\{ f_1(x), f_2(x) \}$ with
\[
  f_1(x) = \max\{ 0, x^{(1)} \}, \quad f_2(x) = \max\{ 1, x^{(1)} - 1 \}. 
\]
Clearly, $f_1^* = 0$ and $f_2^* = 1$. Moreover, one has 
\[
  S(f_2 - f_2^*) = \{ x \in \mathbb{R}^2 \mid x^{(1)} \le 2 \}. 
\]
Therefore $0^+ S(f_2 - f_2^*) = 0^+ S(f)$ and one can conclude that $f$ has a global error bound by
Lemma~\ref{lem:ErrorBound_StratifiedReccCones}.
\end{example}

\begin{example}
Let $d = 2$ and $f(x) = \min\{ f_1(x), f_2(x) \}$, where
\[
  f_1(x) = \max\{ 0, x^{(1)} \}, \quad f_2(x) = \max\{ 1 + x^{(2)}, 1 - x^{(2)} \}.
\]
Let us apply Lemma~\ref{lem:ErrorBound_StratifiedReccCones} first. Indeed, one has $f_1^* = 0$, $f_2^* = 1$, and
\begin{gather*}
  S(f) = 0^+ S(f) = \{ x \in \mathbb{R}^2 \mid x^{(1)} \le 0 \}, \quad
  \\
  S(f_2 - f_2^*) = 0^+ S(f_2 - f_2^*) = \{ x \in \mathbb{R}^2 \mid x^{(2)} = 0 \}.
\end{gather*}
Consequently, by Lemma~\ref{lem:ErrorBound_StratifiedReccCones} the function $f$ does not have a global error bound, but
has an error bound on any finite union of polyhedral sets $V \subset \mathbb{R}^2$ having bounded intersection with the
ray $K := \{ x \in \mathbb{R}^2 \mid x^{(1)} \ge 0, \: x^{(2)} = 0 \}$.

Let us now apply Theorem~\ref{thrm:ErrorBound_RecessionCones}. As is easy to check, one has
\[
  [f]_+^{\infty} = \min\{ [x^{(1)}]_+, \vert x^{(2)} \vert \}.
\]
(see \eqref{eq:RecFuncViaMinMaxRepresentation}). Therefore
\[
  S([f]_+^{\infty})
  = \Big\{ x \in \mathbb{R}^2 \Bigm\vert x^{(1)} \le 0 \text{ or } x^{(2)} = 0 \} \Big\}.
\]
Note that $S([f]_+^{\infty})$ is not contained in $0^+ S(f)$. Consequently, by 
Theorem~\ref{thrm:ErrorBound_RecessionCones} the function $f$ does not have a global error bound, but has an error one
any finite union of polyhedral sets $V \subset \mathbb{R}^2$ having bounded intersection with the ray $K$.
\end{example}

\section{Error bounds for systems of piecewise affine equalities and inequalities}
\label{sect:PAConstraints}

To conveniently summarize the main results of this article, let us apply them to obtain a straightforward extension of
Hoffman's theorem \cite{Hoffman} to the case of systems of piecewise affine equalities and inequalities.

Let $F \colon \mathbb{R}^d \to \mathbb{R}^m$ and $G \colon \mathbb{R}^d \to \mathbb{R}^n$ be piecewise affine
functions, and denote by
\[
  \Omega = \{ x \in \mathbb{R}^d \mid F(x) = 0, \: G(x) \le 0 \}
\]
the solution set of the system $F(x) = 0$, $G(x) \le 0$. Here the inequality is understood coordinate-wise.

\begin{theorem}
Let $\Omega$ be nonempty, $V \subset \mathbb{R}^d$ be a given set, and $\| \cdot \|$ be an arbitrary norm on
$\mathbb{R}^m$. Then the inequality
\begin{equation} \label{eq:ErrorBound_PAffSystem}
  \tau \dist(x, \Omega) \le \varphi(x) := \| F(x) \| + \sum_{i = 1}^n \max\{ G_i(x), 0 \} 
  \quad \forall x \in V
\end{equation}
is satisfied for some $\tau > 0$, provided one of the following five conditions holds true:
\begin{enumerate}
\item{$V$ is bounded;}

\item{$V$ is unbounded and 
\[
  \liminf_{\| x \| \to + \infty, x \in V} \frac{\varphi(x)}{\| x \|} > 0;
\]
}

\item{$V$ is a cone and $\varphi$ is coercive on $V$;}

\item{$V = \mathbb{R}^d$ and both $F$ and $G$ are positively homogeneous;}

\item{$V$ is a finite union of polyhedral sets and $S(\varphi^{\infty}) \cap 0^+ V \subseteq 0^+ \Omega$.}
\end{enumerate}
Furthermore, the last condition is necessary for inequality \eqref{eq:ErrorBound_PAffSystem} to hold true in the general
case, while the second and third conditions are necessary for this inequality to hold true in the case 
$0^+ \Omega \cap \cl \cone V = \{ 0 \}$.
\end{theorem}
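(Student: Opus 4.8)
The plan is to recognise \eqref{eq:ErrorBound_PAffSystem} as an error bound for a single real-valued piecewise affine function and then to invoke the results of Section~\ref{sect:ErrorBoundsPAFunc}. The one subtlety, which I expect to be the main obstacle, is that for an \emph{arbitrary} norm $\| \cdot \|$ on $\mathbb{R}^m$ the function $\varphi$ itself is generally \emph{not} piecewise affine, since $\| F(\cdot) \|$ is a convex function composed with a piecewise affine map. To circumvent this I would introduce the auxiliary function
\[
  \psi(x) = \| F(x) \|_1 + \sum_{i = 1}^n \max\{ G_i(x), 0 \}, \qquad \| y \|_1 = \sum_{k = 1}^m |y_k|.
\]
Since $\| F(x) \|_1 = \sum_{k = 1}^m \max\{ F_k(x), -F_k(x) \}$ and the set of piecewise affine functions is closed under addition and finite suprema, $\psi$ is a nonnegative piecewise affine function with $S(\psi) = \Omega \ne \emptyset$ and $[\psi]_+ = \psi$; moreover, by the equivalence of norms on $\mathbb{R}^m$ there are constants $0 < c_1 \le c_2$ with $c_1 \psi(x) \le \varphi(x) \le c_2 \psi(x)$ for all $x$. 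Hence \eqref{eq:ErrorBound_PAffSystem} holds on $V$ for some $\tau > 0$ if and only if $\psi$ has an error bound on $V$ in the sense of \eqref{def:ErrorBound}, the constants differing only by the factor $c_1$ or $c_2$. I would also record, for condition~(5), that $\psi^{\infty}(x) = \| F^{\infty}(x) \|_1 + \sum_{i = 1}^n \max\{ G_i^{\infty}(x), 0 \}$, so that $S(\psi^{\infty}) = \{ x \mid F^{\infty}(x) = 0, \ G^{\infty}(x) \le 0 \} = S(\varphi^{\infty})$ is cut out by norm-free conditions and is therefore the same for every equivalent norm.

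With this reduction in hand, each sufficient condition is a direct application of an earlier result to $f = \psi$, using $S(\psi) = \Omega$ and $0^+ S(\psi) = 0^+ \Omega$. Condition~(1) is Theorem~\ref{thrm:ErrorBound_Boundedset}. Condition~(2) follows from the sufficiency part of Theorem~\ref{thrm:ErrorBound_GrowthAtInfinity}, because $\varphi \le c_2 \psi$ gives $\liminf_{\| x \| \to + \infty, \, x \in V} \psi(x) / \| x \| > 0$. Condition~(3) follows from Theorem~\ref{thrm:ErrorBound_on_Cone}, since $\varphi$ is coercive on the cone $V$ precisely when $\psi$ is. For condition~(4), if $F$ and $G$ are positively homogeneous then so is $\psi$, and hence $\psi$ has a global error bound by the corollary to Theorem~\ref{thrm:Robinson} on positively homogeneous piecewise affine functions. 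Condition~(5) follows from Theorem~\ref{thrm:ErrorBound_RecessionCones}, as $S([\psi]_+^{\infty}) \cap 0^+ V = S(\varphi^{\infty}) \cap 0^+ V \subseteq 0^+ \Omega = 0^+ S(\psi)$.

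The necessity assertions follow from the ``only if'' parts of the same theorems, again through the equivalence between error bounds for $\varphi$ and for $\psi$. If $V$ is a finite union of polyhedral sets and \eqref{eq:ErrorBound_PAffSystem} holds, then Theorem~\ref{thrm:ErrorBound_RecessionCones} applied to $\psi$ yields $S(\varphi^{\infty}) \cap 0^+ V = S([\psi]_+^{\infty}) \cap 0^+ V \subseteq 0^+ S(\psi) = 0^+ \Omega$, which is condition~(5). If $0^+ \Omega \cap \cl \cone V = \{ 0 \}$, i.e. $0^+ S(\psi) \cap \cl \cone V = \{ 0 \}$, and $V$ is unbounded, then the necessity part of Theorem~\ref{thrm:ErrorBound_GrowthAtInfinity} gives $\liminf_{\| x \| \to + \infty, \, x \in V} \psi(x) / \| x \| > 0$, whence condition~(2) holds (using $\varphi \ge c_1 \psi$); if in addition $V$ is a cone, then $\cl \cone V = \cl V$ and the necessity part of Theorem~\ref{thrm:ErrorBound_on_Cone} shows that $\psi$, hence $\varphi$, is coercive on $V$, which is condition~(3). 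All remaining checks — that $\psi$ is piecewise affine, that the $\liminf$ and coercivity conditions transfer between $\varphi$ and $\psi$, and that $S(\psi) = \Omega$ — are routine.
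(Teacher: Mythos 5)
Your proposal is correct and follows essentially the same route as the paper: replace $\varphi$ by an equivalent function built from a polyhedral norm on $\mathbb{R}^m$ (the paper uses $\| \cdot \|_{\infty}$, you use $\| \cdot \|_1$), note that this function is piecewise affine with $0$-sublevel set $\Omega$ and the same sublevel set of the recession function, and then invoke the theorems of Section~\ref{sect:ErrorBoundsPAFunc} together with the equivalence of norms. Your write-up merely spells out, condition by condition, the applications that the paper's proof states in one sentence.
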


\begin{proof}
Let $\| \cdot \|_{\infty}$ be the $\ell_{\infty}$ norm on $\mathbb{R}^d$. Introduce the function
\[
  f(x) = \| F(x) \|_{\infty} + \sum_{i = 1}^n \max\{ G_i(x), 0 \} \quad \forall x \in \mathbb{R}^d.
\]
This function is obviously piecewise affine, $S(f) = \Omega$, and $[f]_+ = f$. Note also that the function
\[
  \varphi^{\infty}(x) = \lim_{\lambda \to + \infty} \frac{\varphi(\lambda x)}{\lambda}
  = \| F^{\infty}(x) \| + \sum_{i = 1}^n \max\{ G_i^{\infty}(x), 0 \}
\] 
is correctly defined and $S(\varphi^{\infty}) = S(f^{\infty})$. Hence applying the results of the previous section to
the function $f$ and taking into account the fact that the norms $\| \cdot \|$ and $\| \cdot \|_{\infty}$ are
equivalent, one obtains the required result.
\end{proof}

\bibliographystyle{abbrv}  
\bibliography{PWAff_ErrorBounds_bibl}

\end{document}